\newcommand{\res}{\mathop{\hbox{\vrule height 7pt width 0.5pt depth 0pt
			\vrule height 0.5pt width 6pt depth 0pt}}\nolimits}
\definecolor{LimeGreen}{cmyk}{0.50, 0.5, 1, 0}
\newcommand{\eps}{\varepsilon} 
\newcommand{\dx}{\, {\rm d}x}
\newcommand{\dy}{\, {\rm d}y}
\newcommand{\ds}{\, {\rm d}s}
\newcommand{\dt}{\, {\rm d}t}
\newcommand{\e}{\varepsilon}
\newcommand{\Sph}{{\mathbb S}}
\theoremstyle{plain}
\newtheorem{theorem}{Theorem}[section]
\newtheorem{lemma}[theorem]{Lemma}
\newtheorem{proposition}[theorem]{Proposition}
\numberwithin{equation}{section}
\newcommand{\N}{\mathbb{N}}
\newcommand{\R}{\mathbb{R}}
\renewcommand{\S}{\mathcal{S}}
\renewcommand{\L}{\mathcal{L}}
\renewcommand{\H}{\mathcal{H}}
\newcommand{\M}{\mathcal{M}}
\newcommand{\dHn}{\, {\rm d}\H^{n-1}}
\newcommand{\loc}{\mathrm{loc}}
\newcommand{\defas}{:=}
\newcommand{\wto}{\rightharpoonup}
\newcommand{\wsto}{\overset{*}{\wto}}
\newcommand{\om}{\omega}
\newcommand{\Per}{\mathrm{Per}\,}
\newcommand{\sfV}{\mathsf{V}}
\newcommand{\sfIV}{\mathsf{IV}}
\newcommand{\sfRV}{\mathsf{RV}}
\theoremstyle{definition}
\newtheorem{definition}[theorem]{Definition}
\newtheorem{notation}[theorem]{Notation}
\theoremstyle{remark}
\newtheorem{remark}[theorem]{Remark}
\renewcommand{\tilde}{\widetilde}
\renewcommand{\d}{\, \mathrm{d}}
\newcommand{\supp}{\mathrm{supp}\,}
\newcommand{\Hu}{\hat H}
\begin{document}

\title[Phase-transition functionals]
{Interacting phase fields yielding phase separation on surfaces}
\author[B. Lledos]{Benjamin Lledos}
\address[]{Université catholique de Louvain, Belgium}
\email[Benjamin Lledos]{benjamin.lledos@uclouvain.be}
\author[R. Marziani]{Roberta Marziani}
\address[]{Università degli studi dell'Aquila, Italy}
\email[Roberta Marziani]{roberta.marziani1@univaq.it}
\author[H. Olbermann]{Heiner Olbermann}
\address[]{Université catholique de Louvain, Belgium}
\email[Heiner Olbermann]{heiner.olbermann@uclouvain.be}


\maketitle

{\small
\noindent \keywords{\textbf{Keywords:} Singular perturbation, phase-field approximation, free discontinuity problems, $\Gamma$-convergence.}

\medskip


\begin{abstract}
  In the present article we study   diffuse interface models  for two-phase biomembranes.  We will do so by starting off with a diffuse interface model on $\R^n$ defined by two coupled phase fields $u,v$. The first phase field $u$ is the diffuse  approximation of the  interior of the membrane; the second phase field $v$ is  the diffuse approximation of the two phases of the membrane. We prove a compactness result and a lower bound in the sense of $\Gamma$-convergence for pairs of phase functions $(u_\e,v_\e)$. As an application of this first result, we consider a diffuse approximation of a two-phase Willmore functional plus line tension energy. 
\end{abstract}

\section{Introduction}

The rigorous variational framework for the diffuse  approximation of the area of a hypersurface in $\R^n$   is the well-known analysis by Modica and Mortola \cite{MoMo77,Modi87}: To a smooth function $u$ on $\Omega\subseteq \R^n$, one associates the Van der Waals-Cahn-Hilliard energy
  \begin{equation}\label{eq:0}
M_\e(u)=\int_\Omega \left(\frac{\e}{2}|\nabla u|^2+\frac{W(u)}{\e}\right)\dx\,.
\end{equation}
Here $W$ is a non-negative double-well potential with exactly two zeros located at $u=\pm1$, for example $W(u)=(1-u^2)^2$. The integrand above will be called ``Modica-Mortola integrand'' in the sequel. 
In the sharp interface limit $\e\to 0$, $M_\e$ converges in the sense of $\Gamma$-convergence to a multiple of the perimeter functional 
\[
\mathcal P(u)=
\begin{cases}
  \mathrm{Per}\,E &\text{ if }u=2\chi_E-1 \text{ for some set of finite perimeter }E\\
+\infty & \text{ else. }
\end{cases}
\]
 In the last decades this result has been generalized in  several directions. For example we mention \cite{Baldo} for the multi-phase case, \cite{Bouchitte} where 
 heterogeneous fluids which may undergo temperature changes are taken into account,  \cite{BaFo, OwSte} for anisotropic models, and \cite{AnBraCP,BraZep,CFG,CFHP,Morfe,Marziani} where 
 the interaction between singular perturbations and homogenization is considered (see also \cite{BMZ,BEMZ1,BEMZ2} for the homogenization of Ambrosio-Tortorelli functionals).

With certain applications in biophysics in mind, we are going to call boundaries of sets of finite perimeter  \emph{membranes} in the sequel. In some of these applications, one is interested in models for membranes that themselves possess two different phases (with different physical properties), and the interface between such phases should again be associated to an  energy measuring its length. We will call such membranes \emph{two-phase membranes}. A concrete  example are the J\"ulicher-Lipowsky energies \cite{JuLi96} associated to a membrane $\partial E\subseteq \R^3$,
\[
  \mathcal{E}(S_1,S_2) = \sum_{j=1,2}\int_{\partial E} \Big(k_1^j (H-H_0^{j})^2+ k_2^j K\Big)\,{\rm d}\mathcal{H}^{2} + \sigma\int_{\Gamma} 1\,{\rm d}{\mathcal H}^1,
\]
where the membrane $\partial E$ is decomposed into the two phases $S_1,S_2$, with common boundary $\Gamma$, $H$ denotes the mean curvature of $\partial E$,  $H_0^{j}$, $j=1,2$ are different reference values for the mean curvature, $K$ denotes Gauss curvature of $\partial E$,  $k_i^j$, $i,j=1,2$, are phase dependent elastic moduli, and $\sigma$ is the interfacial energy density. 

\medskip

 Similar to the manner in which we approximated the interior of the membrane $\partial E$ by a phase function $u$, we may now wish to approximate the two phases $S_1,S_2$ by a phase function $v$, defined on $\partial E$. 

\medskip

Such an effort has been undertaken recently in \cite{OR}, where diffuse interface energies of Modica-Mortola type have been considered on (generalized) hypersurfaces. Using the concept of generalized $BV$ functions on currents from \cite{ADS}, the paper \cite{OR} contains a compactness result and lower bound in the sense of $\Gamma$-convergence for the Modica-Mortola functional evaluated on  sequences of current-function pairs $(S_\e,v_\e)$. As an application of that first result, it also contains  a compactness result and a lower bound in the sense of $\Gamma$-convergence  for the diffuse approximation of a two-phase Willmore functional combined with a line tension energy (for the case $n=3$). Other noteworthy research efforts concerning the variational analysis of multiphase membranes are \cite{ChMV13,Helm13,Helm15} for the rotationally symmetric case, and the analysis of a more general setting without symmetry assumptions in \cite{BrLS20}.

In the present article, we study the diffuse approximation of a two-phase membrane starting from a pair of phase functions $(u,v)$. The diffuse surface energy of the membrane is \eqref{eq:0}, while the diffuse interfacial energy between the two phases of the membrane is given by the integral of the \emph{product} of the Modica-Mortola integrands for $u$ and $v$, 
  \begin{equation}\label{eq:1}
I_\e(u,v)=\int_\Omega \left(\frac{\e}{2}|\nabla u|^2+\frac{W(u)}{\e}\right)\left(\frac{\e}{2}|\nabla v|^2+\frac{W(v)}{\e}\right)\dx\,.
\end{equation}


In the first main contribution of the present paper, we will consider   a sequence $u_\e$ of phase fields such  that $\dfrac{1+u_\e}{2}$ converges strictly in $BV$ towards the indicator function $\frac{1+u}{2}=\chi_E$ of a set of finite perimeter $E$ in $\R^n$.   We assume that  the diffuse  energies $M_\e(u_\e)+I_\e(u_\e,v_\e)$ are uniformly bounded, and show that there exists a  phase function $v$ that has values in $\{-1,1\}$ $\mathcal H^{n-1}$ almost everywhere on the reduced boundary of $E$, and a subsequence of $(u_{\e},v_{\e})_{\e}$ such that this sequence of pairs converges in a suitable  sense to the pair $(u,v)$.

Our second contribution is a translation of the compactness result  and the lower bound in the sense of $\Gamma$-convergence  for the two-phase Willmore functional plus line tension from \cite{OR} to the setting of pairs of coupled phase functions $(u,v)$. 

\medskip

To explain this second result of the present paper in slightly more detail,  let us consider the diffuse approximation of the Willmore energy for the phase function $u$ in $\R^3$,
\begin{equation*}
\mathcal F_\eps(u):= \int_{\R^3} \frac{1}{\eps
}\left(\frac{W'(u)}{\eps}-\eps\Delta u\right)^2\dx\,.
\end{equation*}
It has been shown in \cite{BP,RS} that  $M_\e+\mathcal F_\e$ converges to the sum of the area functional and the Willmore functional as $\eps\to 0$, in the sense of $\Gamma$-convergence. We will consider couplings of the phase field $v$ the diffuse Willmore energy density,
\[
J_\e(u,v)=\int_{\R^3}\frac{a(v)}{\eps
}\left(\frac{W'(u)}{\eps}-\eps\Delta u\right)^2\dx
\]
where $a(v)$ can be thought of as a phase-dependent  elastic modulus. Adopting suitable variants of the hypotheses from \cite{OR}, we make the  assumption that $J_\e$ is uniformly bounded  in a way that allows for an application of the Li-Yau inequality \cite{li1982new}, and which hence guarantees strict convergence in $BV$ of a subsequence of $\frac{1+u_\e}{2}$ to the indicator function $\chi_E$ of a set of finite perimeter $E\subset\R^3$.  We show that there exists a further subsequence such that the pairs $(u_\e,v_\e)$ converge to a limit pair $(u,v)$ as in our first result, and prove that the associated energy functionals (which in the limit is the two-phase Willmore functional with line tension) satisfy a lower bound inequality in the sense of $\Gamma$-convergence in the limit $\e\to 0$. 

\medskip

In the proof of our results we make extensive use, on the one hand, of the analysis of Modica-Mortola type functionals on current-function pairs from \cite{OR}, which in turn is based on the theory of $BV$ functions on currents from \cite{ADS}, and on the other hand, of the analysis of the diffuse Willmore functional from \cite{RS}. Our first main result is obtained by a slicing argument: To each of the slices we may apply the compactness theorem and the lower bound from \cite{OR}. It then remains to show that the limit is the same for each of the slices, which is achieved by a blow-up argument (see Step 2 in the proof of Proposition \ref{prop:compactness}). In the application of the first result to the diffuse approximation of a two-phase Willmore functional, we rely crucially on the estimates for the ``discrepancy measures''
\[
\xi_\e=\left(\frac{\e}{2}|\nabla u_\e|^2-\frac{W(u_\e)}{\e}\right)\L^3
\]
from \cite{RS}. It is shown there that $|\xi_\e|\to 0$. We may use this result to control the behavior of the \emph{push-forward} of measures 
\[
\mu_\eps=\left(\frac{\e}{2}|\nabla u_\e|^2+\frac{W(u_\eps)}{\e}\right)\L^3
\]
under the graph map $x\mapsto (x,u_\e(x))$, and show that this yields sufficiently strong convergence of the 3-tuples  $(\mu_\e,u_\e,v_\e)$ (see Lemma \ref{lem:strong_uv_convergence}) in order to obtain our result from standard lower semicontinuity theorems for measure-function pairs. 

\medskip

In order to obtain full $\Gamma$-convergence statements, we would have to supply upper bound constructions matching the lower bounds provided by Theorems \ref{thm1} and \ref{thm2}. However, we refrain from treating the general construction in the present paper. We only construct upper bounds (corresponding to the situation encountered in Theorem \ref{thm2}) for the case of limits given by a smooth two-dimensional surface in $\R^3$ possessing a smooth one-dimensional interface between the two phases defined on it. For this case, the generalization of the constructions from \cite{BP} is relatively straightforward, and is carried out in Appendix \ref{AppendixA}.  The question how to generalize this construction to non-smooth limit surfaces is left open for future research.

\medskip

The plan of the paper is as follows: In Section \ref{sec:preliminaries} we introduce some notation and preliminaries concerning $BV$ functions, Sobolev spaces and BV spaces on rectifiable currents, oriented varifolds and  measure-function pairs.  In Section \ref{sec:main_results} we state 
and prove our first  result, Theorem \ref{thm1}. 
Our second result, Theorem \ref{thm2}, will be stated and proved in Section \ref{sec:proofthm2}. 
The upper bound construction for the smooth case is contained in Appendix \ref{AppendixA}, whereas  Appendix \ref{AppendixB} contains some properties of Sobolev spaces with respect to measures.

\section*{Acknowledgments}
The authors would like to thank Matthias R\"oger for helpful discussions, in particular for pointing out the reference \cite{roger2008allen}. RM has been funded by the European Union - NextGeneration EU under the Italian Ministry of University and Research (MUR) National Centre for HPC, Big Data and Quantum Computing CN\_00000013 - CUP: E13C22001000006.

\section{Notation and Preliminary results}
\label{sec:preliminaries}

\noindent In this section we collect some notation and recall some results that will be useful throughout the paper.

\subsection{Notation}\label{subs:notation} 

\begin{enumerate}[label=(\alph*)]
\item $n,m\geq 2$ are fixed positive integers; 
\item $\mathcal L^n$ and $\mathcal H^{n-1}$ denote the Lebesgue measure and the $(n-1)$-dimensional Hausdorff measure on $\R^n$, respectively;
\item for every $A\subset\R^n$ let $\chi_A$ denote the characteristic function of the set $A$;
\item $\Omega$ is an open subset of $\R^n$;
\item $\mathcal{M}(\Omega)$ is the space of finite Radon measures on $\Omega$;
\item $BV(\Omega)$ is the space of functions with bounded variation on $\Omega$ (see Section \ref{BV});
\item $B_r(x)$ denotes the open ball in $\R^n$ of radius $r>0$ centered at $x$;
\item $\Lambda_k(\R^n)$ and $\Lambda^k(\R^n)$, $0\le k\le n$, are the space of $k$-vectors and $k$-covectors, respectively, in $\R^n$;
\item $\mathcal D^k(\Omega)$, $0\le k\le n$, denotes the space of all infinitely differentiable $k$-differential forms $\Omega\to\Lambda^k(\R^n)$ with compact support in $\Omega$, and $\mathcal{D}_k(\Omega)$, $0\le k\le n$, is the space of $k$-\textit{currents} on $\Omega$ (see section \ref{currents}).
\end{enumerate}

\medskip

\subsection{Functions of bounded variation}\label{BV}

We say that a map $u\in L^1_{\rm loc.}(\Omega)$ is a function of bounded variation if
\begin{equation}\nonumber
    |Du|(\Omega):=\sup\biggl\{\int_{\Omega}u\,
    \textrm{div}\phi\dx :\ \phi\in {C}^1_c(\Omega;\R^{n}),\ \|\phi\|_{L^\infty(\Omega)}\leq1\biggr\}<\infty\,.
\end{equation}
We denote by $BV(\Omega)$ the set of such maps. In this case, $Du$ is a vector-valued finite Radon measure and $|Du|$ is the \emph{total variation} of $u$. If $u\in W^{1,1}(\Omega)$, then $Du$ is just the weak gradient of $u$.

We say that a set $E$ has \emph{finite perimeter} in $\Omega$ if $\chi_E\in BV(\Omega)$ and we write $\Per(E,\Omega):=|D\chi_E|(\Omega)$. When $\Omega=\R^n$ we simply write $\Per(E)$.

\begin{definition}\label{StrictBVConc}
    We say that a sequence $(u_\eps)_{\eps>0}$ converges strictly in $BV_{\rm loc.}(\R^n)$ to $u$ if
 $u_\eps$ converges to $u$ in $L_{\rm loc.}^1(\R^n)$ and $|Du_\eps|(\R^n)$ converges to $|Du|(\R^n)$.
\end{definition}

\begin{definition}[Measure theoretic boundary]
    Let $E$ be a set of finite perimeter and $x\in\R^n$. We say $x\in\partial_*E$, the measure theoretic boundary of $E$ if 
    \begin{equation*}
        \limsup_{r\to 0} \frac{\L^n(B_r(x)\cap E)}{r^n}>0
    \end{equation*}
    and
    \begin{equation*}
         \limsup_{r\to 0} \frac{\L^n(B_r(x)\backslash E)}{r^n}>0.
    \end{equation*}
\end{definition}
\begin{proposition}
  Let $E$ be a set of finite perimeter. For $\mathcal{H}^{n-1}$ a.e. $x\in \partial_*E$, the generalized normal to $E$:
 \begin{equation*}
		 \nu_E(x):=\lim\limits_{r\rightarrow0}\dfrac{\int_{B(x,r)} D\chi_E}{\int_{B(x,r)} |D  \chi_E|}
		\end{equation*}
exists and $|\nu_E(x)|=1$.
\end{proposition}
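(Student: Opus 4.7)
The plan is to prove this statement by combining two classical ingredients: the Besicovitch derivation theorem for vector-valued Radon measures, and the De Giorgi–Federer structure theorem for sets of finite perimeter.

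First, I would apply the Besicovitch derivation (differentiation) theorem to the $\R^n$-valued finite Radon measure $D\chi_E$ with respect to its total variation $|D\chi_E|$. This theorem gives that the Radon--Nikodym derivative
\[
\nu_E(x) := \lim_{r\to 0}\frac{D\chi_E(B_r(x))}{|D\chi_E|(B_r(x))}
\]
exists for $|D\chi_E|$-almost every $x\in\R^n$, and satisfies $|\nu_E(x)|=1$ at $|D\chi_E|$-a.e.\ point (the unit-length property follows from the polar decomposition $D\chi_E = \nu_E\, |D\chi_E|$ together with the general fact that the density of a measure with respect to itself is $1$). At this stage we have a set $N\subset\R^n$ with $|D\chi_E|(N)=0$ outside of which the limit exists with unit norm.

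Next, I would upgrade this $|D\chi_E|$-a.e.\ statement to an $\mathcal{H}^{n-1}$-a.e.\ statement on the measure-theoretic boundary $\partial_*E$. For this, I would use the De Giorgi structure theorem, which identifies a distinguished set $\partial^*E$ (the reduced boundary, defined precisely as the set where the above limit exists and has norm $1$) and shows that $\partial^*E$ is countably $\mathcal{H}^{n-1}$-rectifiable with
\[
|D\chi_E| = \mathcal{H}^{n-1}\res \partial^*E.
\]
In particular, $|D\chi_E|$ is absolutely continuous with respect to $\mathcal{H}^{n-1}\res\partial^*E$, so the exceptional set $N$ above has $\mathcal{H}^{n-1}(N\cap\partial^*E)=0$.

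Finally, I would invoke Federer's theorem, which asserts
\[
\partial^*E \subseteq \partial_*E \quad\text{and}\quad \mathcal{H}^{n-1}(\partial_*E\setminus \partial^*E)=0.
\]
Combining this with the previous step, the limit defining $\nu_E(x)$ exists and has modulus one for $\mathcal{H}^{n-1}$-a.e.\ $x\in\partial_*E$, which is the claim.

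The main obstacle here is conceptual rather than computational: the Besicovitch derivation step by itself only controls a $|D\chi_E|$-null set, whereas the statement concerns an $\mathcal{H}^{n-1}$-null set inside $\partial_*E$. Bridging this gap rigorously requires the non-trivial identifications $|D\chi_E|=\mathcal{H}^{n-1}\res\partial^*E$ and $\mathcal{H}^{n-1}(\partial_*E\setminus\partial^*E)=0$, i.e.\ the full De Giorgi--Federer structure theorem. Since these are well-established results in the $BV$ literature (and the excerpt's Section \ref{BV} explicitly signals reliance on standard $BV$ theory), I would simply cite them rather than reproving them, keeping the proof short.
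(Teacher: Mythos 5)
The paper does not supply a proof of this proposition; it is recorded in the preliminaries as a standard fact from $BV$ theory (see, e.g., \cite{AmFuPa}). Your argument is correct and follows the classical De Giorgi--Federer route. One small observation: since the reduced boundary $\partial^*E$ is \emph{by definition} the set of points where the limit defining $\nu_E(x)$ exists and has modulus one, Federer's theorem $\mathcal{H}^{n-1}(\partial_*E\setminus\partial^*E)=0$ together with the inclusion $\partial^*E\subseteq\partial_*E$ already yields the stated conclusion; your preliminary Besicovitch step and the identification $|D\chi_E|=\mathcal{H}^{n-1}\res\partial^*E$ explain \emph{why} $\partial^*E$ exhausts $|D\chi_E|$-almost all of $\R^n$, which is clarifying but not strictly needed once the De Giorgi structure theorem and Federer's theorem are invoked as black boxes.
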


The family of sets with finite perimeter can be identified with the space $BV(\Omega;\{0,1\})$, that is, the space of functions in $BV(\Omega)$ taking values in $\{0,1\}$ almost everywhere. Indeed if $u\in BV(\Omega;\{0,1\})$ then $u=\chi_E$ with $E=\{x\in\Omega\colon u(x)=1\}$ and 
\begin{equation*}
    Du(B)=\int_{B\cap J_u}\nu_u\dHn\,,
\end{equation*}
for every Borel set $B\subset\R^n$, where $J_u$ is the set of approximate jump points of $u$ which, up to $\mathcal{H}^{n-1}$-negligible sets, coincides with $\partial_*E\cap\Omega$ and $\nu_u$ is the external normal to $J_u$ which coincides with $\nu_E$ $\mathcal{H}^{n-1}$-a.e. Moreover 
\begin{equation*}
    \Per(E,\Omega)=|Du|(\Omega)=\mathcal H^{n-1}(J_u\cap\Omega)\,.
\end{equation*}
 In a similar manner any $u\in BV(\Omega;\{-1,1\})$ is of the form $u=2\chi_E-1$ for some  set of finite perimeter $E$.
\begin{proposition}[Co-area formula for $BV$-functions]\label{CoAreaBV}
    If $u\in BV(\Omega)$, then for every non-negative measurable function $g$ we have that
    \begin{equation}\nonumber
        \int_\Omega g\d|Du|=\int_\R\int_{\partial_*\{u\geq t\}} g \dHn\dt\,.
    \end{equation}
\end{proposition}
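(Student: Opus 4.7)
The plan is to reduce the claimed identity to the classical Fleming–Rishel co-area formula in its measure-valued form, namely the identity of Radon measures on $\Omega$
\begin{equation*}
|Du| = \int_\R |D\chi_{\{u\geq t\}}|\,dt,
\end{equation*}
meaning that for every Borel set $B\subset\Omega$ one has $|Du|(B)=\int_\R |D\chi_{\{u\geq t\}}|(B)\,dt$, with the integrand being Lebesgue-measurable in $t$. First I would establish this for smooth $u$ via Sard's theorem and the classical smooth co-area formula, and then extend it to an arbitrary $u\in BV(\Omega)$ by choosing mollified approximations $u_k\to u$ strictly in $BV(\Omega)$ and using lower semicontinuity together with Fatou's lemma to pass to the limit in $t$.

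Once this measure identity is in hand, I would apply it to a Borel set $B$: by Fubini,
\begin{equation*}
\int_\Omega \chi_B \,d|Du| = |Du|(B) = \int_\R |D\chi_{\{u\geq t\}}|(B)\,dt.
\end{equation*}
Since $\{u\geq t\}$ has finite perimeter in $\Omega$ for a.e. $t\in\R$, the De Giorgi structure theorem gives $|D\chi_{\{u\geq t\}}|=\mathcal{H}^{n-1}\mres \partial_*\{u\geq t\}$ for such $t$, so $|D\chi_{\{u\geq t\}}|(B) = \mathcal{H}^{n-1}(B\cap\partial_*\{u\geq t\})=\int_{\partial_*\{u\geq t\}}\chi_B\,d\mathcal{H}^{n-1}$. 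This proves the desired formula when $g=\chi_B$.

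To pass from indicators to general non-negative measurable $g$, I would apply standard measure-theoretic extension: by linearity the identity holds for non-negative simple Borel functions, and then the monotone convergence theorem — applied both to the outer integral against $|Du|$ on the left and to both integrals on the right (with the outer Lebesgue integral in $t$) — yields the identity for every non-negative Borel $g$, and hence for every non-negative measurable $g$ after modification on a $|Du|$-null set (which does not affect either side since null sets for $|Du|$ are null for $\mathcal{H}^{n-1}\mres\partial_*\{u\geq t\}$ for a.e. $t$ by the measure identity above).

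The main obstacle is the Fleming–Rishel identity itself: the smooth case is a direct consequence of the classical co-area formula plus the observation that the level sets $\{u=t\}$ are smooth hypersurfaces for a.e. $t$, but the passage to general $u\in BV$ requires some care. I would carry it out by mollification, exploiting strict $BV$ convergence to get convergence of total variations, together with the fact that $\chi_{\{u_k\geq t\}}\to\chi_{\{u\geq t\}}$ in $L^1_{\rm loc.}$ for a.e. $t$, and using lower semicontinuity plus a matching upper bound coming from strict convergence to upgrade this to equality after integrating in $t$.
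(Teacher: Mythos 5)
The paper itself states this proposition without proof, treating it as a classical preliminary result (it is the Fleming--Rishel co-area formula for $BV$ functions; see, e.g., Theorem 3.40 in the reference \cite{AmFuPa}), so there is no paper argument to compare against. Judged on its own merits, your outline of the reduction from the measure identity $|Du|=\int_\R |D\chi_{\{u\ge t\}}|\,\mathrm{d}t$ to the general formula (De Giorgi's structure theorem for a.e.\ $t$ to convert $|D\chi_{\{u\ge t\}}|$ into $\mathcal H^{n-1}\res\partial_*\{u\ge t\}$, then linearity, then monotone convergence for non-negative Borel $g$) is correct and standard.

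The weak point is how you argue the measure identity itself. Mollification with strict $BV$ convergence, Fatou's lemma and lower semicontinuity of the perimeter yield only \emph{one} inequality, namely
\begin{equation*}
\int_\R |D\chi_{\{u\ge t\}}|(\Omega)\,\mathrm{d}t \;\le\; \liminf_{k\to\infty}\int_\R |D\chi_{\{u_k\ge t\}}|(\Omega)\,\mathrm{d}t \;=\; \liminf_{k\to\infty}|Du_k|(\Omega) \;=\; |Du|(\Omega)\,.
\end{equation*}
There is no ``matching upper bound coming from strict convergence'': the reverse inequality $|Du|(\Omega)\le\int_\R |D\chi_{\{u\ge t\}}|(\Omega)\,\mathrm{d}t$ is obtained differently, by inserting the layer-cake representation $u=\int_0^{\infty}\chi_{\{u>t\}}\,\mathrm{d}t-\int_{-\infty}^0\bigl(1-\chi_{\{u>t\}}\bigr)\mathrm{d}t$ into $\int_\Omega u\,\operatorname{div}\phi\,\mathrm{d}x$ for $\phi\in C^1_c(\Omega;\R^n)$ with $\|\phi\|_{L^\infty}\le1$ and applying Fubini, and then taking the supremum over $\phi$. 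Moreover, localizing to arbitrary Borel sets $B$ (as you need for the Fubini step with $g=\chi_B$) requires an extra argument: strict convergence gives $|Du_k|(\Omega)\to|Du|(\Omega)$, but $|Du_k|(A)\not\to|Du|(A)$ in general for open $A\subsetneq\Omega$, so the equality of measures has to be obtained from weak-$*$ convergence of $|Du_k|$ together with the convergence of total masses, or via outer regularity. These are fixable gaps rather than fatal ones, but as written the first paragraph of your argument does not establish the claim it states.
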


\subsection{Currents and BV functions on currents}\label{currents}
For $0\le k\le n$, we denote by $\Lambda_k(\R^n)$ the space of all $k-$vectors and by $\Lambda^k(\R^n)$ the space of all $k-$covectors. 
The Hodge star isomorphism is denoted by $$\star:\Lambda_k(\R^n)\to \Lambda_{n-k}(\R^n)\,. $$
For $\Omega\subset\R^n$ an open set, we denote by $\mathcal{D}^k(\Omega)$ the space of all infinitely differentiable $k-$differential forms $\Omega\to \Lambda^k(\R^n) $ with compact support in $\Omega$.
The dual $\mathcal{D}_k(\Omega)$ of $\mathcal{D}^k(\Omega)$ is the space of $k-$currents on $\Omega$. The boundary $\partial T\in \mathcal{D}_{k-1}(\Omega)$ of a current $T   \in \mathcal{D}_k(\Omega)$ is defined as:
\begin{equation*}
    \langle\partial T,\omega\rangle = \langle T,\d\omega\rangle \text{ for all } \omega\in \mathcal{D}^{k-1}(\Omega).
\end{equation*}

\begin{definition}
    The mass of a current $T\in \mathcal{D}_k(\Omega) $ is
    \begin{equation*}
        M_\Omega(T)=\sup\{ \langle T,\omega\rangle\,: \omega\in \mathcal{D}^k(\Omega)\,, \|\omega\|_{L^\infty}\leq 1\}\,.
    \end{equation*}
     Moreover we say that a sequence $(T_h)_{h\in\N}\in \mathcal{D}_k(\Omega)$ converges to $T\in \mathcal{D}_k(\Omega)$ in the sense of currents, and we write $T_h\stackrel{*}{\rightharpoonup}T$, if
    \begin{equation*}
        \langle T_h,\omega\rangle\to\langle T,\omega\rangle\,, \quad\text{ for all }\omega\in \mathcal{D}^k(\Omega)\,.
    \end{equation*}
\end{definition}
If $M_\Omega(T)<+\infty$, then by the Riesz representation theorem there exists a Radon measure $\|T\|$ on $\Omega$ and a $\|T\|$-measurable function $\overrightarrow{T}:\Omega\to \Lambda_k(\R^n)$ such that $|\overrightarrow{T}|=1$ $\|T\|$ a.e. on $\Omega$ and
\begin{equation*}
    \langle T,\omega\rangle=\int_{\Omega}\langle\omega,\overrightarrow{T}\rangle\d \|T\|\text{ for all }\omega\in \mathcal{D}^k(\Omega)\,.
\end{equation*}

For a $k$-rectifiable set $M\subset \R^n$, for  $\mathcal{H}^k$-a.e. $x\in M$ there is measure theoretic tangent space $T_xM$. A map $\tau:M\to \Lambda_k(\R^n)$ is an orientation on $M$ if $\tau$ is $\mathcal{H}^k$-measurable and for $\mathcal{H}^k$a.e. $x\in M$, $\tau(x)$ is a unit simple $k$-vector than spans $T_xM$. 
For a $k$-rectifiable set $M\subset\Omega$, $\tau$ an orientation on $M$ and $\rho:M\to\R^+$ a locally $\mathcal{H}^k$-summable function, we define the rectifiable $k$-current $T:=\llbracket M,\tau,\rho\rrbracket\in \mathcal{D}_k(\Omega)$ as:

\begin{equation*}
    \langle T,\omega\rangle:=\int_M\langle \omega, \tau\rangle\rho\,{\rm d}\mathcal{H}^k\,,\quad \forall \omega\in \mathcal{D}^k(\Omega)\,.
\end{equation*}
The function $\rho$ is called multiplicity of $T$.
We denote by $\mathcal R_k(\Omega)$ the set of rectifiable k-currents, and by $\mathcal I_k(\Omega)$ the set of integer rectifiable k-currents, i.e., the set of rectifiable k-currents with integer-valued multiplicity $\rho$. A current $T\in\mathcal{I}_k(\Omega)$ with $\partial T\in\mathcal{I}_{k-1}(\Omega)$ is called integral.

In the context of graphs over sets in $\R^n$ it is convenient to identify $\R^{n+1}=\R^n_x\times\R_y$ for which the standard basis is $(e_1,\dots,e_n,e_y)$ and the corresponding coordinates are $(x,y)=(x_1,\dots,x_n,y)$.



\subsubsection{$BV$ functions over currents}

For a rectifiable  $k$-current  $T=\llbracket M,\tau,\rho\rrbracket$ and a function $u:M\to\R$ we introduce the set between the graph of $u$ and $0$:
\begin{equation*}
E_{u,T} :=\{   (x,y)\in M\times \R: 0<y<u(x) \textrm{ if } 0<u(x)\,, u(x)<y<0 \textrm{ if } u(x)<0 \}.
\end{equation*}
and for every $(x,y)\in E_{u,T}$ an induced orientation:
\begin{equation*}
 \alpha(x,y):=
   \begin{cases}
   -e_y\wedge \tau(x) &\textrm{ if } y>0, \\
   e_y\wedge \tau(x) &\textrm{ if } y<0.
\end{cases} 
\end{equation*}
We define the $k+1-$current $\Sigma_{u,T}:=\llbracket E_{u,T},\alpha,\rho\circ p\rrbracket$ with $p(x,y)=x$ for every $(x,y)\in\R^n\times\R$ and we obtain the generalized graph of $u$ over $T$:
\begin{equation*}
    G_{u,T}:=-\partial\Sigma_{u,T}+T\otimes\delta_0.   
\end{equation*}
where $T\otimes\delta_0$ is defined as $\langle T\otimes\delta_0 ,\omega\rangle=\langle T,\omega(\cdot,0)\rangle.$
 We now recall the definition of $BV$ functions over integer rectifiable currents \cite[Definition 2.5]{ADS}.
\begin{definition}
    We consider  a rectifiable  $k$-current $T=\llbracket M,\tau,\rho\rrbracket$ and $u:M\to\R$. We say that $u$ is a function of bounded variation over $T$ if the mass $M(G_{u,T})$ of the generalized graph is finite.
    The set of the functions of finite bounded variation over $T$ is denoted by $BV(T)$. Moreover, we denote by $BV(T;A)$ the set of functions in $BV(T)$ taking values in $A\subset\R$ $\mathcal{H}^k$ a.e..
\end{definition}
 

\subsubsection{Sobolev spaces with respect to currents}
We define Sobolev spaces with respect to currents based on the above definition of $BV$ spaces. Let $S=\llbracket M,\tau,\rho\rrbracket $ be as above, and $\mu=\|S\|$.  For $\H^k\res M$ almost every $x\in M$, we may define  $P(x)$ as the  the projection onto the tangent space of $M$ at $x$. For such $x$ we set 
\[
\nabla_{\mu} u(x):=P(x)\nabla u(x)\,.
\]
For $p\in [1,\infty)$ and $u\in C^\infty_c(\R^n)$, we define
\[
\|u\|_{H^{1,p}(S)}=\|u\|_{L^p_\mu(\R^n)}+\|\nabla_\mu u\|_{L^p_\mu(\R^n)}\,.
\]
The Sobolev space $H^{1,p}(S)$ is defined as the closure of $C^\infty_c(\R^n)$ with respect to the norm $\|u\|_{H^{1,p}(S)}$. 

\begin{remark}
  The definition of Sobolev spaces with respect to measures is a delicate issue that we will not treat in any depth  here; we refer the interested reader to \cite{BBS,BBF,FM}. Our definitions (the one of $H^{1,p}(S)$ above, and the definition of $\Hu^{1,p}_{\mu_\e}(\R^n)$ below) are slightly different to the one used in these references and we cannot apply the theory developed there. The only property of these spaces  that we will prove  (paralleling Proposition 2.1 in \cite{BBS}) is the uniqueness of the tangential gradient $\nabla_{\|S\|}v$ for a given $v\in H^{1,p}(S)$ or of the gradient $\nabla v$ for a given $v\in \Hu^{1,p}_{\mu_\e}(\R^n)$, which does not immediately follow from the definitions. As a straightforward consequence, one obtains that the spaces $H^{1,p}(S),\Hu^{1,p}_{\mu_\e}(\R^n)$ are reflexive. However, since the only fact that will be of importance for us is that every element in these spaces can be approximated by smooth functions (a fact that is implicitly exploited by appealing to \cite[Theorem 1]{OR} in the proof of Proposition \ref{prop:compactness} below), we have relegated the proof of the uniqueness of the gradient to the appendix, see Lemmata \ref{lem:gradient_uniqueness_current} and \ref{lem:gradient_uniqueness_bulk}.
\end{remark}


\subsection{Oriented Varifolds} We use the notations and definitions of \cite[Section 2.2]{OR} and \cite{Hut}. We denote the set of $k-$dimensional oriented subspaces of $\R^n$ by $G^o(n,k)$. We can identify this set with the simple unit $k$-vectors in $\Lambda_k(\R^n)$. 
\begin{definition}
    An oriented $k$-varifold $V$ is an element of $\M(\R^n\times G^o(n,k)) $:
    \begin{equation*}
        V(\varphi)=\int_{\R^n\times G^o(n,k)}\varphi(x,\xi)\d V(x,\xi)
    \end{equation*}
    for every $\varphi\in C^0_c(\R^n \times G^o(n,k))$.
\end{definition}

For a $k-$rectifiable set $M$ with orientation $\tau$ and $\theta_\pm:M\to \R^+$ locally $\H^k$-summable such that $\theta_++\theta_->0$ we define $\underline{v}(M,\tau, \theta_\pm)$ as the following $k$-dimensional rectifiable oriented varifold:
\begin{equation*}
    \underline{v}(M,\tau, \theta_\pm)(\varphi)=\int_M \big( \theta_+(x)\varphi(x,\tau(x))+\theta_-(x)\varphi(x,-\tau(x))\big)\d\H^k\,.
\end{equation*}
If the multiplicity functions $\theta_\pm$ are $\N$-valued, then $\underline{v}(M,\tau, \theta_\pm)$ is an integral oriented $k-$varifold. We denote the set of $k$-dimensional  oriented varifolds by $\sfV^o_k(\R^n)$, the set of $k$-dimensional oriented rectifiable varifolds by $\sfRV_k^o(\R^n)$, and the set of $k$-dimensional oriented integral varifolds by $\sfIV^o_k(\R^n)$.

To an  oriented $k$-varifold $V$ we can associate the $k-$current:
\begin{equation*}
    \underline{c}(V)(\varphi)=\int_{\R^n\times G^o(n,m)}\langle\varphi(x),\xi\rangle\d V(x,\xi)\,.
\end{equation*}
Hence, the  convergence as oriented varifolds implies the convergence of the associated currents.

The first variation of a varifold $V\in \sfV^0_k(\R^n)$ is the $\R^n$ valued distribution $\delta V$ defined by
\[
\delta V(X)=-\int \nabla X:P_T \d V(x,T)\quad \text{ for } X\in C^1_c(\R^n;\R^n)\,,
\]
where $P_T$ denotes  projection to the tangent plane orthogonal to $T\in G^o(n,k)$ in matrix form. If there exists $H_V\in L^1_{\|V\|}(\R^n;\R^n)$ such that
\[
\delta V(X)=\int H_V\cdot X \d\|V\|\,,
\]
then we say that $V$ possesses generalized mean curvature $H_V$. If $A\subset \R^n$ is $k$-rectifiable and $\|V\|=\H^{k}\res A$ for some $V\in \sfIV_k^o(\R^n)$ possessing generalized mean curvature $H_V$, then we also write $H_A\equiv H_V$.

\subsection{Measure-function pairs}
  We recall the definition of measure-function pairs from \cite{Mos01} (see also \cite{Hut}). Let $\Omega\subset\R^n$. If $\mu\in\mathcal M(\R^n)$ and $f\in L^1_{{\rm loc},\mu}(\Omega;\R^m)$, then we say that $(\mu,f)$ is a measure-function pair over $\Omega$ with values in $\R^m$.
  \begin{definition}[Convergence of measure-function pairs]\label{def:measure-function-pairs} Let $\{(\mu_k,f_k)\colon k\in\mathbb{N}\}$ and $(\mu,f)$ be measure-function pairs over $\Omega$ with values in $\R^m$, and $1\le q<\infty$. 
  \begin{enumerate}[label=$(\roman*)$]
      \item We say that $(\mu_k,f_k)$ converges weakly in $L^q$ to $(\mu,f)$ and write 
      \begin{equation*}
          (\mu_k,f_k)\rightharpoonup(\mu,f)\quad\text{ in } L^q
      \end{equation*}
      if $\mu_k\stackrel{*}{\rightharpoonup}\mu$ in $\mathcal{M}(\Omega)$, $\mu_k\res f_k\stackrel{*}{\rightharpoonup}\mu\res f$ in $\mathcal{M}(\Omega;\R^m)$, and $\|f_k\|_{L^p_{\mu_k}(\Omega;\R^m)}$ is uniformly bounded.
      \item   We say that $(\mu_k,f_k)$ converges strongly in $L^q$ to $(\mu,f)$ and write 
  	\begin{equation*}
  		(\mu_k,f_k)\to(\mu,f)\quad\text{ in } L^q
  	\end{equation*}
  if for all $\varphi\in C_c^0(\R^n\times\R^m)$,
  \begin{equation*}
\lim_{k\to\infty}\int_{\Omega}\varphi(x,f_k(x))\d\mu_k(x)=\int_\Omega\varphi(x,f(x))\d\mu(x)\,,
  \end{equation*}
and
\begin{equation*}
\lim_{j\to\infty}\int_{\S_{k,j}}|f_k|^q\d\mu_k=0\quad\text{ uniformly in }k\,,
\end{equation*}
where $\S_{k,j}=\{x\in\Omega\colon |x|\ge j\text{ or }|f_k(x)|\ge j\}$.
  \end{enumerate}

  \end{definition}
  %


\section{Setting of the problem and first result}
\label{sec:main_results}
In this section we describe the setting of the problem we are considering. Afterwards we state and prove our first main result: Theorem \ref{thm1}.

We let $W\colon\R\to\R$ be a double well potential, i.e., a continuous function, such that for some $T,c>0$, $p>1$ it holds
\begin{equation}\label{hp:W}
W^{-1}(0)=\{-1,1\}\,,\quad c|t|^p\le W(t)\le \frac1c|t|^p\quad \forall |t|\ge T\,.
\end{equation}
Let $\phi:\R\to\R$ be defined by
\begin{equation}\label{phi}
\phi(t)\defas\int_{-1}^t\sqrt{2W(s)}\d s\,, 
\end{equation}

and set 
\begin{equation}\label{def:c_W}
   \sigma=\phi(1)\,.  
\end{equation}

For $\e>0$, let $u_\e\in W^{1,2}_{\loc}(\R^n)$ be such that
\begin{equation}\label{mue}
   \mu_\e:=\left(\frac{\e}{2}|\nabla u_\e|^2+\e^{-1}W(u_\e)\right)\L^n 
\end{equation}

is a finite measure.

For $p\in[1,\infty)$ and $v\in C^\infty_c(\R^n)$, set
\[
\|v\|_{\Hu^{1,p}_{\mu\e}(\R^n)}:=\|v\|_{L^p_{\mu_\e}(\R^n)}+\|\nabla v\|_{L^p_{\mu_\e}(\R^n;\R^n)}\,.
\]
We define $\Hu^{1,p}_{\mu_\e}$ as the completion of $C^\infty_c(\R^n)$ with respect to $\|v\|_{\Hu^{1,p}_{\mu\e}(\R^n)}$. In Lemma \ref{lem:gradient_uniqueness_bulk} in the appendix we show that this definition yields a unique gradient $\nabla v$ for every $v\in H^{1,p}_{\mu_\e}(\R^n)$.

The above definition of the Sobolev spaces with respect to the measure $\mu_\e$ has been chosen such that we may perform a slicing procedure with respect to the level sets of $u_\e$, and obtain on almost every slice  a Sobolev function on the associated current, see Lemma \ref{lem:Hdefslicing} below.

For $n\ge2$, $\eps>0$ we consider the family of functionals $I_\eps\colon(L^1_\loc(\R^n))^2\to[0,+\infty]$ defined by
\begin{equation}\label{def:I_e}
	I_\eps(u,v)\defas \int_{\R^n} \left(
	\frac{W(v)}\eps+ \frac\eps{ 2}|\nabla v|^2
	\right)
	\left(
	\frac{W(u)}\eps+ \frac\eps{ 2}|\nabla u|^2
	\right)\dx \,,
\end{equation}
if $u\in W^{1,2}_\loc(\R^n)$,  and $v\in \Hu^{1,2}_{\mu_\eps}(\R^n)$,  and $+\infty$ otherwise.
 We denote also by $M_\eps\colon L^1_\loc(\R^n)\to[0,+\infty]$ the classical Modica-Mortola functional, i.e.,
\begin{equation}\label{def:M_e}
M_\eps(u)\defas\int_{\R^n}\left(\frac{W(u)}{\eps}+\frac\eps{ 2}|\nabla u|^2\right)\dx=\mu_\e(\R^n)\,,
\end{equation}
if $u\in W^{1,2}_\loc(\R^n)$, and $+\infty$ otherwise.\\
In this model  the variable $u$ has to be understood as a regularization of a  piecewise constant function  $2\chi_E-1\in BV_{\rm loc}(\R^n;\{\pm1\})$  where the surface $\partial E$ represents a bio-membrane, whereas the variable $v$ is a phase-field variable modeling the phase separation on the membrane itself.\\

  \noindent

  We now state the first main result of this paper.

  \begin{theorem}[Lower bound and compactness of $I_\eps$]\label{thm1}
  	Let $I_\eps$ be defined as in \eqref{def:I_e}. Then the following hold:
  	    	\begin{enumerate}[label=$(\arabic*)$]
  		\item Compactness.	Let $((u_\eps,v_\eps))_{\eps>0}\subset (L^1_\loc(\R^n))^2$ be a sequence such that  $\sup I_\eps(u_\eps,v_\eps)<+\infty$. Assume also that there exists $u\in BV_\loc(\R^n;\{{ -1},1\})$ such that $u_\eps\to u$ strictly in $BV_\loc(\R^n)$ in the sense of Definition \ref{StrictBVConc}.
  		Then there exists  $v\in  BV(S;\{{ -1},1\})$, with $S=\llbracket J_u,\star\nu_u,1\rrbracket$, such that, up to subsequence, 
  \begin{equation}\label{eq:81}
    \left(|\nabla (\phi\circ u_\e)|\L^n,v_\e\right)\to (\sigma \H^{n-1}\res J_u, v)\quad\text{ in }L^q
  \end{equation}
as measure-function pairs for every $q\in [1,p)$,  where $\phi$ is given in \eqref{phi}. 

  		\item Lower bound. Let $((u_\eps,v_\eps))_{\eps>0}\subset (L^1_\loc(\R^n))^2$ be a sequence and  $(u,v)\in BV_\loc(\R^n;\{{ -1},1\})\times BV(S;\{{ -1},1\})$ with $S=\llbracket J_u,\star \nu_u,1\rrbracket$, such that $u_\e\to u$ strictly in $BV$ and  the convergence \eqref{eq:81} holds. 
Then 
  		\begin{equation}\label{liminf-inequality-1}
  			\liminf_{\eps\searrow0}I_\eps(u_\eps,v_\eps)\ge I(u,v)\,,
  		\end{equation}
  		where $I\colon (L^1_{\rm loc}(\R^n))^2\to[0,+\infty]$ is defined as
  		\begin{equation}\label{def:I}
      			I(u,v)\defas \begin{cases}\sigma^2 \mathcal{H}^{n-2}(J_v)& \text{ if }(u,v)\in BV_{\rm loc}(\R^n;\{{ -1},1\})\times BV(S;\{{ -1},1\}\\ +\infty &\text{ else,} \end{cases} 		\end{equation}
    where $J_v=\supp \partial \llbracket \{v=1\},\star \nu',1\rrbracket$, and   		$\sigma$ is as in \eqref{def:c_W}. 
  	\end{enumerate}
  \end{theorem}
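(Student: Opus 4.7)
\emph{Overall strategy.} The plan is to slice $I_\e(u_\e,v_\e)$ by the level sets of $u_\e$ and reduce to the Modica-Mortola compactness and lower bound on rectifiable currents from \cite{OR}. The starting point is Modica-Mortola's pointwise inequality
\[
\tfrac{\e}{2}|\nabla u_\e|^2+\tfrac{W(u_\e)}{\e}\ge \sqrt{2W(u_\e)}|\nabla u_\e|=|\nabla(\phi\circ u_\e)|,
\]
combined with the co-area formula, which yields
\[
I_\e(u_\e,v_\e)\ge \int_\R \sqrt{2W(t)}\,E_\e(t)\,\d t,\qquad E_\e(t):=\int_{\{u_\e=t\}}\Bigl(\tfrac{\e}{2}|\nabla v_\e|^2+\tfrac{W(v_\e)}{\e}\Bigr)\,\d\H^{n-1}.
\]
Here $E_\e(t)$ is exactly the Modica-Mortola energy of $v_\e$ on the rectifiable $(n-1)$-current $S_\e^t:=\llbracket\{u_\e=t\},\star\nu_{u_\e},1\rrbracket$. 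Strict $BV_\loc$-convergence $u_\e\to u=2\chi_E-1$ together with co-area also yields $\|S_\e^t\|=\H^{n-1}\res\{u_\e=t\}\wsto\H^{n-1}\res J_u=\|S\|$ for $\L^1$-a.e.\ $t\in(-1,1)$.

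\emph{Compactness on slices.} Fatou applied to the slicing inequality, combined with $\sup_\e I_\e(u_\e,v_\e)<\infty$, gives $\liminf_\e E_\e(t)<\infty$ for $\L^1$-a.e.\ $t\in(-1,1)$. For each such $t$, I would invoke \cite[Theorem~1]{OR} on the current $S_\e^t$ to extract, along a subsequence depending on $t$, a limit $v^t\in BV(S;\{-1,1\})$ with $(|\nabla(\phi\circ v_\e)|\,\H^{n-1}\res\{u_\e=t\},v_\e)$ converging strongly to $(\sigma\,\H^{n-1}\res J_u,v^t)$ as measure-function pairs. A diagonal extraction then yields a single subsequence $\e\to 0$ along which the slicewise limits $v^t$ exist for all $t$ in a countable dense subset of $(-1,1)$.

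\emph{Main obstacle: identification of the slice limits.} The crucial step is proving that $v^t$ is independent of $t$, so that a single $v\in BV(S;\{-1,1\})$ emerges. I would carry this out through a blow-up at $\H^{n-1}$-a.e.\ $x_0\in J_u$: strict $BV$-convergence together with the Modica-Mortola bound on $M_\e(u_\e)$ forces $u_\e(x_0+\e\,\cdot)$ to converge, after extraction, to the one-dimensional optimal profile $q$ connecting $-1$ to $+1$ across the tangent hyperplane $T_{x_0}J_u$. Consequently, the slices $\{u_\e=t\}$ and $\{u_\e=t'\}$ are, at scale $\e$, nearly parallel translates of the same hypersurface, separated by a distance of order $\e\bigl|q^{-1}(t)-q^{-1}(t')\bigr|$. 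A Poincar\'e-type estimate on the slab between them, whose right-hand side is controlled by the joint energy $I_\e(u_\e,v_\e)$ and the slab width, then shows $v^t=v^{t'}$ $\H^{n-1}$-a.e.\ on $J_u$. This is the most technically demanding point, since the blow-up must be coupled carefully with the weak $BV$-type convergence of $v_\e$ on each slice.

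\emph{Lower bound and joint convergence.} With the limit $v$ identified, Fatou on the slicing inequality together with the slicewise lower bound $\liminf_\e E_\e(t)\ge \sigma\,\H^{n-2}(J_v)$ from \cite[Theorem~1]{OR} give
\[
\liminf_{\e\to 0}I_\e(u_\e,v_\e)\ge \int_{-1}^{1}\sqrt{2W(t)}\,\sigma\,\H^{n-2}(J_v)\,\d t=\sigma^2\,\H^{n-2}(J_v),
\]
using $\int_{-1}^{1}\sqrt{2W(t)}\,\d t=\phi(1)=\sigma$. Finally, the joint convergence \eqref{eq:81} follows from the co-area identity $|\nabla(\phi\circ u_\e)|\L^n=\int_\R\sqrt{2W(t)}\,\H^{n-1}\res\{u_\e=t\}\,\d t$, combined with the slicewise strong measure-function pair convergence above and dominated convergence in $t$; the range $q\in[1,p)$ reflects the $p$-growth condition \eqref{hp:W}, which provides the uniform bound $\int|v_\e|^p\,|\nabla(\phi\circ u_\e)|\,\d x\le C$ required for strong convergence as measure-function pairs.
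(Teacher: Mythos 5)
Your overall strategy (slicing by level sets of $u_\e$, invoking \cite[Theorem~1]{OR} on each slice, identifying the slice limits via a blow-up, then Fatou plus coarea for the lower bound and joint convergence) is the same as the paper's. However, your identification step rests on a claim that is not true in this setting. You assert that strict $BV$-convergence together with the Modica--Mortola bound forces $u_\e(x_0+\e\,\cdot)$ to converge to the one-dimensional optimal profile at $\H^{n-1}$-a.e.\ $x_0\in J_u$. A uniform bound on $M_\e(u_\e)$ (which, incidentally, is not even among the hypotheses of Theorem~\ref{thm1}) and strict $BV$-convergence do \emph{not} give equipartition of energy nor profile convergence at the blow-up scale; this is precisely the kind of control that requires an additional Willmore-type bound and the discrepancy-vanishing result (compare Proposition~\ref{prop:discrepancy}, which is only invoked in the setting of Theorem~\ref{thm2}). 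So the picture in which the slices $\{u_\e=t\}$ and $\{u_\e=t'\}$ are nearly parallel translates at distance $\sim\e|q^{-1}(t)-q^{-1}(t')|$ cannot be used here.

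The paper's identification argument (Step~2 of the proof of Proposition~\ref{prop:compactness}) is deliberately more robust: it blows up only the \emph{limit} set of finite perimeter $E$ at a good point $x_0\in J_u$, uses $L^1$-convergence of the level sets $E_k^{t_i}$ together with the density estimates of \eqref{ApproxTangent}--\eqref{ApproxTangentE} and \eqref{CloseTo1E}--\eqref{CloseTo0E} to produce, via Fubini, a set $D_1$ of positive $\H^{n-1}$-measure of lines that hit both $\partial_*E_k^{t_1}$ at a point with $v_k\ge 1-\delta$ and $\partial_*E_k^{t_2}$ at a point with $v_k\le\delta-1$. No profile information on $u_\e$ enters; the contradiction comes purely from the lower bound $W(u_k)/\e_k\ge c(t_1,t_2,W)/\e_k$ on the slab $\{t_1\le U_k\le t_2\}$ combined with $\int_{D_1}\int|\partial_y(\phi(v_k))|\,\d y\,\d\H^{n-1}\ge c>0$, which makes $I_{\e_k}$ blow up. You should replace the profile argument with this. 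Separately, your diagonal extraction over a countable dense set of $t$ only gives slicewise convergence on that countable set; passing to a.e.\ $t$ (needed both for identifying $v$ in $BV(S;\{-1,1\})$ and for the Fatou step) requires an additional argument, which the paper supplies via the rearrangement and abstract-convergence Lemmata~\ref{Rear} and~\ref{ConvRear}. The lower bound computation and the final joint measure-function pair convergence (coarea plus dominated convergence, with the range $q\in[1,p)$ coming from the $p$-growth in \eqref{hp:W}) are correct and match the paper.
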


\begin{remark}
    The reader may wonder why in the compactness part, we do not show compactness for the measure-function pairs 
    \begin{equation}
      \label{eq:diffusemeas}
    \left(\left(\frac{\e}{2}|\nabla u_\e|^2+\e^{-1}W(u_\e)\right)\L^n,v_\e\right)\,.   
    \end{equation}
   
    In a way, these might be considered as the more natural objects, since one can think of them as the diffuse approximations of the sharp interface measure. The reason why  we show the compactness for the measure-function pairs 
     $\left(|\nabla (\phi\circ u_\e)|\L^n,v_\e\right)$ as in \ref{eq:81} is that
    the latter allows for a straightforward slicing operation via the coarea formula. On these slices we apply the results from \cite{OR}, which are based on the compactness result \cite[Theorem 4.1]{ADS}, which in turn is based on the Federer-Fleming compactness theorem \cite[Theorem 27.3]{Simo83}. It might be possible to derive an analogous result for the measures in \eqref{eq:diffusemeas}, but we expect that this would require an argument that mirrors the Federer-Fleming theorem, showing the convergence of the diffuse currents to an integral one. We believe that our way of relying on the existing literature is more efficient. 
\end{remark}

\begin{proof}[Proof of Theorem \ref{thm1}] The proof is the consequence of two  upcoming propositions.
 Proposition \ref{prop:compactness} gives the compactness part of the theorem. Moreover, we obtain that for a.e. $t\in [0,\sigma]$:
    \begin{equation*}
        \int_{\partial_* E_\eps^t} \varphi(x,v_\eps)\dHn\to \int_{J_u} \varphi(x,v)\dHn \text{ for every }\varphi\in C^0_c(\R^n\times\R)\,,
    \end{equation*}
    where $E_\eps^t=\{\phi\circ u_\eps > t\}$. This additional property is the assumption of Proposition \ref{prop:lower_bound}, which provides the lower bound.
\end{proof}

\subsection{Proof of compactness and lower bound}
\label{sec:proofthm1}
We first recall \cite[Theorem 1]{OR} which will be used to obtain compactness in the proof of Proposition \ref{prop:compactness} once we have performed a slicing procedure.
\begin{theorem}[\cite{OR} Theorem 1]
\label{thm1OR}
    Let a family $(E_\eps)_{\eps>0}$ of finite perimeter sets in $\R^n$, whose boundaries carry the currents $S_\eps=\llbracket \partial_*E,\star\nu_{E_\eps},1\rrbracket$  and a sequence $(v'_\eps)_{\eps}\in H^{1,2}(S_\eps)$ be given.

    Assume that for some set $E$ of finite perimeter  $\chi_{E_\eps}\to\chi_E$ strictly in $BV(\R^n)$. Let $\nu'=\nu_E:\partial_*E\to \Sph^{n-1}$ denote the inner normal of $E$ and set $S'=\llbracket \partial_*E,\star\nu',1\rrbracket$, $\mu=\H^{n-1}\res \partial_*E$. Let us further assume that for some $\Lambda'>0$
    \begin{equation*}
        \int_{\R^n}\bigg(\frac{W(v_\eps')}{\eps}+\frac{\eps}{2}|\nabla v_\eps'|^2\bigg){\rm d}\mu'_\eps\le\Lambda \,.
    \end{equation*}
    where $\mu'_\eps=\H^{n-1}\res \partial_*{E_\eps}$.
    Then there exists $v\in BV(S;\{-1,1\})$ and a subsequence $\eps\to 0$ such that the following holds:
    \begin{equation*}
        (\mu_\eps',v_\eps')\to (\mu,v) \text{ as  measure-function pairs in } L^q \text{ for any } 1\leq q< p\,.
    \end{equation*}
    Moreover, $\llbracket \{v=1\},\star\nu',1\rrbracket$ is an integral $(n-1)-$current and we have the lower bound estimate 
    \begin{equation*}
        \liminf_{\eps\searrow0} \int_{\R^n}\bigg(\frac{W(v_\eps')}{\eps}+\frac{\eps}{2}|\nabla v_\eps'|^2\bigg){\rm d}\mu'_\eps\ge \sigma \H^{n-2}(J_v)\,.
    \end{equation*}

\end{theorem}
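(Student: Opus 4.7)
The plan is to transpose the classical Modica--Mortola proof to the surface-valued setting by working with generalized graph currents over $S_\eps$ rather than ordinary BV graphs. First I would introduce the auxiliary function $\phi(t)=\int_{-1}^t\sqrt{2W(s)}\d s$ and the composition $w_\eps:=\phi\circ v_\eps'$, which on smooth representatives satisfies the chain rule $\nabla_{\mu_\eps'} w_\eps=\sqrt{2W(v_\eps')}\,\nabla_{\mu_\eps'} v_\eps'$ on $S_\eps$; since $H^{1,2}(S_\eps)$ is defined as a completion of $C_c^\infty(\R^n)$ and $\phi$ grows like $|t|^{(p+2)/2}$, this identity passes to the completion. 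The Young inequality then gives
\[
|\nabla_{\mu_\eps'} w_\eps|\;\le\;\frac{W(v_\eps')}{\eps}+\frac{\eps}{2}|\nabla v_\eps'|^2,
\]
so that the total variation of $w_\eps$ along $S_\eps$ is bounded by $\Lambda$.

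For compactness I would consider the generalized graph currents $G_{w_\eps,S_\eps}\in\mathcal D_{n-1}(\R^{n+1})$. Their masses are controlled by $M(S_\eps)+\int|\nabla_{\mu_\eps'}w_\eps|\d\mu_\eps'$, which is bounded thanks to the energy bound and the strict BV convergence $M(S_\eps)=\Per(E_\eps)\to\Per(E)=M(S)$. By the Federer--Fleming closure theorem, a subsequence converges weakly as currents to some integral $(n-1)$-current $T$. The strict BV convergence of $\chi_{E_\eps}$ upgrades the $*$-weak convergence of $S_\eps$ to convergence without loss of mass; combined with the vertical control on $w_\eps$, this forces $T$ to be the generalized graph of some $w\in BV(S)$ over $S$ (the horizontal slice of $T$ equals $S$). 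Then $v:=\phi^{-1}\circ w$ is the candidate limit function. To see $v\in\{-1,1\}$ $\H^{n-1}$-a.e.\ on $\partial_*E$, I would push the bound $\int W(v_\eps')\,\d\mu_\eps'\le\Lambda\eps\to 0$ through the measure--function pair convergence: any $\varphi\in C_c^0(\R^n\times\R)$ with $\varphi\ge 0$ and $\varphi(x,y)=0$ on $\R^n\times\{-1,1\}$ pairs against $\mu$ to give $0$, forcing the limit function to concentrate on $\{W=0\}=\{-1,1\}$. The strong $L^q$ convergence of measure--function pairs follows because $\phi$ is a homeomorphism between $\{-1,1\}$ and $\{0,\sigma\}$, so strong convergence of $(\mu_\eps',w_\eps)$ implies strong convergence of $(\mu_\eps',v_\eps')$.

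For the lower bound I would combine Young's inequality with a coarea argument on $S$. Using
\[
\frac{W(v_\eps')}{\eps}+\frac{\eps}{2}|\nabla v_\eps'|^2\;\ge\;\sqrt{2W(v_\eps')}\,|\nabla v_\eps'|\;=\;|\nabla_{\mu_\eps'} w_\eps|,
\]
the left-hand side integrates against $\mu_\eps'$ to at least the total variation of $w_\eps$ on $S_\eps$. The measure--function pair version of Reshetnyak lower semicontinuity on varifolds (or equivalently lower semicontinuity of mass under weak convergence of currents applied to $G_{w_\eps,S_\eps}$) yields
\[
\liminf_{\eps\to 0}\int|\nabla_{\mu_\eps'} w_\eps|\d\mu_\eps'\;\ge\;|D_S w|(S).
\]
Since $w$ takes the two values $0$ and $\sigma$ with jump set equal to $\partial_S\llbracket\{v=1\},\star\nu',1\rrbracket$, and since BV-on-current functions satisfy an intrinsic coarea formula (from \cite{ADS}), we get $|D_S w|(S)=\sigma\,\H^{n-2}(J_v)$, which is the desired bound.

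The main obstacle is the compactness step, where one must identify the weak current limit $T$ with a graph of the form $G_{w,S}$: one has to rule out ``vertical parts'' of $T$ that do not correspond to jumps of a function over $S$, and here the strict BV convergence of $\chi_{E_\eps}$ (and not merely weak $*$ convergence) is essential, as it prevents loss of horizontal mass in the limit. Closely related, the chain-rule identity $\nabla_{\mu_\eps'}(\phi\circ v_\eps')=\sqrt{2W(v_\eps')}\nabla_{\mu_\eps'}v_\eps'$ for $v_\eps'\in H^{1,2}(S_\eps)$ must be verified in the completion (via a truncation/approximation argument using the growth assumption on $W$), since the definition of $H^{1,2}(S_\eps)$ is by abstract closure and $\phi\circ v_\eps'$ need not a priori lie in any such space.
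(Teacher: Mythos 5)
You should first be aware that Theorem \ref{thm1OR} is not proved in this paper at all: it is quoted verbatim from \cite{OR} (Theorem 1) and used as a black box in the proofs of Propositions \ref{prop:compactness} and \ref{prop:lower_bound}. So there is no internal proof to compare your argument against; the relevant comparison is with the original proof in \cite{OR}, which does work in the Anzellotti--Delladio--Scianna framework \cite{ADS} of generalized graphs that you invoke. Your overall strategy (Modica--Mortola trick for $w_\eps=\phi\circ v'_\eps$, mass bounds for $G_{w_\eps,S_\eps}$, compactness for integral currents, strict $BV$ convergence of $\chi_{E_\eps}$ to prevent loss of horizontal mass) is therefore aligned in spirit with the cited source, and several of your preliminary reductions (chain rule in the completion-defined space $H^{1,2}(S_\eps)$, $\partial G_{w_\eps,S_\eps}=0$ since $\partial S_\eps=0$) are sound.

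As a proof, however, the sketch leaves the decisive steps unargued. First, the identification of the Federer--Fleming limit $T$ with a generalized graph $G_{w,S}$ over $S=\llbracket\partial_*E,\star\nu',1\rrbracket$ --- ruling out extra vertical pieces, showing that the horizontal part is exactly $S$ with multiplicity one, and then upgrading weak current convergence to the \emph{strong} measure--function-pair convergence $(\mu'_\eps,v'_\eps)\to(\mu,v)$ in $L^q$, including the tightness condition in Definition \ref{def:measure-function-pairs}(ii), which needs the growth hypothesis \eqref{hp:W} on $W$ together with $\int W(v'_\eps)\,{\rm d}\mu'_\eps\le\Lambda\eps$ --- is precisely the content of the compactness assertion, and you only flag it as ``the main obstacle'' rather than giving an argument. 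Second, in the lower bound, lower semicontinuity of mass applied to $G_{w_\eps,S_\eps}$ does not directly yield $\liminf_\eps\int|\nabla_{\mu'_\eps}w_\eps|\,{\rm d}\mu'_\eps\ge|D_S w|(S)$: the graph mass is $\int\sqrt{1+|\nabla_{\mu'_\eps}w_\eps|^2}\,{\rm d}\mu'_\eps$, not the sum of horizontal and vertical contributions, so you must run a subtraction argument in which the strict convergence $M(S_\eps)\to M(S)=\H^{n-1}(\partial_*E)$ is combined with the additivity of the mass of the \emph{limit} graph (horizontal part of mass $\H^{n-1}(\partial_*E)$ plus vertical part of mass $\sigma\H^{n-2}(J_v)$, which itself relies on the structure/coarea theory of \cite{ADS} for two-valued $BV$ functions over integral currents and on the integrality of $\llbracket\{v=1\},\star\nu',1\rrbracket$). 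These are exactly the points where the work of \cite{OR} lies, so the proposal should be regarded as a plausible plan rather than a complete proof.
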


As a preparatory step, we state and prove a lemma that clarifies the relation between the different notions of Sobolev spaces over currents and measures that we are using:

\begin{lemma}
\label{lem:Hdefslicing}
Let $\e>0$, $u_\e\in W^{1,2}_\loc(\R^n)$ such that the corresponding $\mu_\e$ defined in \eqref{mue} is a finite measure, $E_\e^t=\{x\in\R^n:u_\e(x)>t\}$, and  
\[
S_{\e,t}:=\Big\llbracket \partial_*\{u_\e>t\},\star\frac{\nabla u_\e}{|\nabla u_\e|},1\Big\rrbracket\,.
\]
If  $v\in \Hu^{1,2}_{\mu_\e}(\R^n)$, then $v\in H^{1,2}(S_{\e,t})$ for almost every $t$.    
\end{lemma}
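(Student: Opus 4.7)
The plan is to reduce the assertion to a slicing-type inequality obtained from the coarea formula combined with an AM--GM bound on the integrand of $\mu_\e$. Pointwise,
\[
\frac\e 2|\nabla u_\e|^2+\e^{-1}W(u_\e)\ge |\nabla u_\e|\sqrt{2W(u_\e)}=|\nabla(\phi\circ u_\e)|,
\]
so $|\nabla(\phi\circ u_\e)|\L^n\le \mu_\e$ as measures. Applying the coarea formula to $\phi\circ u_\e$ and the change of variable $t=\phi(s)$ (a homeomorphism of $\R$, since $W>0$ $\L^1$-a.e.), together with the identity $\partial_*\{\phi\circ u_\e>\phi(s)\}=\partial_*\{u_\e>s\}$, I obtain for every non-negative measurable $g\colon\R^n\to \R$ the slicing inequality
\begin{equation}\label{slicing-ineq-proposal}
\int_{\R}\Bigl(\int_{\partial_*\{u_\e>s\}} g\dHn\Bigr)\sqrt{2W(s)}\ds\le \int_{\R^n} g\d\mu_\e.
\end{equation}

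Now let $v\in \Hu^{1,2}_{\mu_\e}(\R^n)$ and let $(v_k)\subset C^\infty_c(\R^n)$ be a representing Cauchy sequence. Applying \eqref{slicing-ineq-proposal} to $g=|v_j-v_k|^2+|\nabla v_j-\nabla v_k|^2$ and using the pointwise bound $|\nabla_{\|S_{\e,s}\|}(v_j-v_k)|\le |\nabla(v_j-v_k)|$ (the tangential gradient is an orthogonal projection of the full gradient), I get
\[
\int_{\R}\Bigl(\|v_j-v_k\|^2_{L^2_{\|S_{\e,s}\|}}+\|\nabla_{\|S_{\e,s}\|}(v_j-v_k)\|^2_{L^2_{\|S_{\e,s}\|}}\Bigr)\sqrt{2W(s)}\ds\le C\|v_j-v_k\|^2_{\Hu^{1,2}_{\mu_\e}(\R^n)},
\]
whose right-hand side tends to $0$ as $j,k\to\infty$.

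From here the conclusion is a standard slicing argument. Extracting a fast subsequence $(v_{k_l})$ with $\|v_{k_l}-v_{k_{l+1}}\|_{\Hu^{1,2}_{\mu_\e}}\le 2^{-l}$ and applying Cauchy--Schwarz on each bounded interval of finite $\sqrt{2W(s)}\ds$-measure, I would obtain $\sum_l \|v_{k_l}-v_{k_{l+1}}\|_{H^{1,2}(S_{\e,s})}<\infty$ for $\sqrt{2W(s)}\ds$-a.e. $s$, hence for Lebesgue-a.e. $s$ (since $\{W=0\}=\{\pm1\}$ is $\L^1$-null). At such $s$ the sequence $(v_{k_l})$ is Cauchy in $H^{1,2}(S_{\e,s})$ and therefore represents an element of that space, which by the uniqueness of the gradient (Lemmata \ref{lem:gradient_uniqueness_current}, \ref{lem:gradient_uniqueness_bulk}) is identified with $v$ itself.

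I expect the main technical point to be this last identification of the slicewise Cauchy limit with $v$ as an element of $H^{1,2}(S_{\e,s})$; I would handle it by using the same approximating sequence $(v_{k_l})$ at both the bulk and slice level. Otherwise the argument is a routine slicing-and-subsequence procedure once the AM--GM-based inequality \eqref{slicing-ineq-proposal} is in place.
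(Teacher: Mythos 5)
Your proof is correct and takes essentially the same route as the paper's: both derive the key slicing inequality from the coarea formula for $\phi\circ u_\e$ combined with the pointwise bound $|\nabla(\phi\circ u_\e)|\le\frac{\e}{2}|\nabla u_\e|^2+\e^{-1}W(u_\e)$, then pass to almost-every slice via a subsequence argument on the approximating smooth functions. You make the fast-subsequence Cauchy step explicit where the paper's proof is terser (working with $v_j-v$ and leaving the subsequence extraction implicit), but the underlying argument is the same.
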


\begin{proof}
Assume that $v_j\in C_c^\infty(\R^n)$ with $v_j\to v$, $\nabla v_j\to \nabla v$ in $L^2_{\mu_\e}(\R^n)$. Then writing $U_\eps=\phi\circ u_\eps$, we have
\[
\begin{split}
    |\nabla U_\eps|&=|\phi'(u_\e)||\nabla u_\e|\\
    &=\frac{\sqrt{2W(u_\e)}}{\sqrt{\e}}\sqrt{\e}|\nabla u_\e|\\
    &\le \frac{\e}{2}|\nabla u_\e|^2+\e^{-1}W(u_\e)\,,
\end{split}
\]
where we have used the Cauchy-Schwarz inequality in the last line. (We will refer to this estimate as the ``Modica-Mortola trick'' in the sequel.)
Hence
\[
\begin{split}
0&=\lim_{j\to\infty}\int |v_j-v|^2+|\nabla v_j-\nabla v|^2\d\mu_\e\\
&\geq \lim_{j\to\infty}\int_{\R^n} \left(|v_j-v|^2+|\nabla v_j-\nabla v|^2\right) |\nabla U_\e(x)|\d x\\
&\geq \lim_{j\to\infty}\int_{-\infty}^\infty\int_{\partial_*\{U_\eps>t\}}( |v_j-v|^2+|\nabla_{\mu_{\e,t}} v_j-\nabla_{\mu_{\e,t}} v|^2 )\d\H^{n-1}\d t\,\,
\end{split}
\]
where $\mu_{\e,t}=\|S_{\e,t}\|$.  Our claim follows from the fact that $\phi$ is a homeomorphism.
\end{proof}

As a further preparation of the proof of the compactness statement in Theorem \ref{thm1}, we introduce two preliminary lemmata. The first one will serve to rearrange the slices $S_{\e,t}$ such that we obtain uniform boundedness in $\e$ of the Modica-Mortola energy for a fixed slice $t$. The second one assures that convergence for almost every slice suffices to obtain the same limit on every slice.

\begin{lemma}\label{Rear}
   Let $\Lambda>0$ and let  $G_k:[a,b]\to[0,\infty)$ be a sequence of measurable functions with $k\in\N$ such that
    \begin{equation*}
        \sup_{k\in\N} \int_a^b G_k(t)\dt\le \Lambda.
    \end{equation*}
    Then, for every $k\in\N$, there exists $h_k:[a,b]\to [a,b]$ with the following properties:
\begin{itemize}
    \item[(i)] For every $t_0\in(a,b)$, there exists $C(\Lambda,t_0)>0$ such that  
\begin{equation}\label{prop1}
     \sup_{k\in\N} G_k(h_k(t))\leq C(\Lambda,t_0)\quad\text{ for a.e. }t\in [a,t_0]\,.
\end{equation}
\item[(ii)] For every $t_0\in(a,b)$ there exist $a<s_1<s_2<b$ such that
\begin{equation}\label{prop2}
   s_1<h_k(t_0) <s_2\quad \text{ for every }k\in\N\,,\,. 
\end{equation}
\item[(iii)] For every non-negative measurable function $f:[a,b]\to \R$,
    \begin{equation}\label{prop3}
    \int_a^b f(h_k(t))\dt=\int_a^b f(t)\dt \,.
    \end{equation}
\end{itemize}    
\end{lemma}

\begin{proof} 
For simplicity of notation, we will prove the statement for  $[a,b]=[0,1]$.

We fix $0<\delta<\frac{1}{2}$ and  $k\in \N$ and  use a recursive argument. Since 
\begin{equation*}
    \int_0^1 G_k(t)\dt\le \Lambda
\end{equation*}
there exists a set $A_k^0\subset [\frac{\delta}{4},1-\frac{\delta}{4}]$ such that
\begin{equation*}
    |A_k^0|>1-\delta\quad \text{ and }\quad \forall\, t\in A_k^0\,, \quad G_k(t)\leq \frac{3\Lambda}{\delta}\,.
\end{equation*}
Hence, we can find a compact set $B_k^0\subset [\frac{\delta}{4},1-\frac{\delta}{4}]$ such that
\begin{equation*}
     |B_k^0|=1-\delta\quad \text{ and } \quad\forall\, t\in B_k^0\,,\quad G_k(t)\leq \frac{3\Lambda}{\delta}\,.
\end{equation*}
We then define the function:
\begin{align*}
h_k^0\colon &[0,1-\delta) \longrightarrow B_k^0\\
&t\longmapsto \inf\{s\in B_k^0\textrm{, } |\{x\in B_k^0 \textrm{, } x\leq s\}|\geq t\}.
\end{align*}
Analogously, there exists a compact set $B_k^1 \subset [-\frac{\delta}{8},1-\frac{\delta}{8}]$ such that:
\begin{equation*}
B_k^0\cap B_k^1=\emptyset\,\textrm{,}\quad|B_k^1|=\frac{\delta}{2} \quad\textrm{ and }\quad  \forall\, t\in B_k^1\,, \quad G_k(t)\leq \frac{5\Lambda}{\delta} \,.
\end{equation*}
We define the function:
\begin{align*}
h_k^1\colon &[1-\delta, 1-\frac{\delta}{2})  \longrightarrow B_k^1\\
&t\longmapsto \inf\{s\in B_k^1\textrm{, } |\{x\in B_k^1 \textrm{, } x\leq s\}|\geq t-(1-\delta)\}.
\end{align*}
Carrying on in the same way, we can find a family of sets $\{B_k^i\}_{i\in\N}$ and a family of functions $\{h_k^i\}_{i\in\N}$ such that:
\begin{itemize}
\item $\{B_k^i\}_{i\in\N}$ is a family of pairwise disjoint compact subsets of $[0,1]$ with $|\cup_iB_k^i|=1$,
satisfying
\begin{equation*}
	|B_k^0|=1-\delta\,,\quad |B_k^i|=\frac\delta{2^{i}},\quad B_k^i\subset [\frac{\delta}{2^{i+2}},1-\frac{\delta}{2^{i+2}}]\quad \forall i\ge 1\,,
\end{equation*} 
and 
\begin{equation*}
	G_k(t)\leq \frac\Lambda\delta K_i
	\quad\forall t\in B_k^i\,,
\end{equation*}
with  $K_i:= (2^{i+1}+1)$;
\item $\{h_k^i\}_{i\in\N}$ is a family of functions  
\begin{equation*}
	h_k^0\colon[0,1-2\delta)\to B_k^0\,,\quad 	h_k^i\colon[1-\frac\delta{2^{i-1}}, 1-\frac\delta{2^{i}})\to B_k^i\quad \forall i\ge1
\end{equation*}
defined as
\begin{equation*}
\begin{split}
h^0_k(t)&:= \inf\{s\in B_k^0\textrm{, } |\{x\in B_k^0 \textrm{, } x\leq s\}|\geq t\}\,,\\
h^i_k(t)&:= \inf\{s\in B_k^i\textrm{, } |\{x\in B_k^i \textrm{, } x\leq s\}|\geq t-(1-\frac\delta{2^{i-1}})\}\,.
\end{split}
\end{equation*}
\end{itemize}
Hence, we  define  the function $h_k\colon[0,1)\to[0,1]$ as 

\begin{equation*}
h_k(t):=\begin{cases}
	h_k^0(t)& \text{if }t\in [0,1-\delta)\\[4pt]
	h_k^i(t)&\text{if }t\in [1-\frac\delta{2^{i-1}}, 1-\frac\delta{2^{i}})\,, i\ge1
\end{cases}\,.
\end{equation*}

Thanks to the definition of $h_k$, for every $0<t_0<1$, there exists $i\in\N$ such that $t_0< 1-\frac{\delta}{2^i}$. Hence, for every $k\in\N$ and every $t\in [0,t_0]$ we have that $$G_k(h_k(t))\leq K_i\frac{\Lambda}{\delta}:=C(\Lambda,t_0)\,.$$
By construction, \eqref{prop2} is satisfied.
Moreover, for every $s\in[0,1]$, \begin{equation*}
    \begin{split}
        |\{t\in [0,1]\textrm{, } h_k(t)<s\}|&=\sum_{i\in N} |\{t\in [0,1]\textrm{, } h_k(t)<s \textrm{ and  } h_k(t)\in B_k^i\}|\\
        &=\sum_{i\in N}|B_k^i\cap [0,s]|=s\,.
    \end{split}
\end{equation*}
Thus, by density, for every measurable function $f:(0,1)\to\R_+$, we also deduce the third property.
\end{proof}

In the following lemma, we will consider a set $X$ with a notion of convergence denoted by ``$\to$''. All that we require of the ``notion of convergence'' is that it maps the set of sequences in $X$ to the space $X$ with an  additional symbol, reserved for non-convergent sequences, such that the following two properties are fulfilled:
\begin{itemize}
    \item[(i)] If $x_k\to x$, then $x_{k_l}\to x$ for every strictly increasing sequence $(k_l)_{l\in\N}\subset\N$
    \item[(ii)] If $x_k\not \to x$, then there exists a strictly increasing sequence $(k_l)_{l\in\N}\subset\N$ such that no subsequence $(k_{l_m})_{m\in\N}$ satisfies $x_{k_{l_m}}\to x$.
\end{itemize}
 
\begin{lemma}\label{ConvRear}
Let $X$ be a   space with a notion of converging sequence  and  $F_k:[a,b]\to X$ with $k\in\N$ such that:
\begin{enumerate}
    \item For a.e. $t\in[a,b]$ and for every strictly monotone sequence $(k_l)_{l\in\N}\subset \N$,
there exist $F_t\in X$ and a subsequence $(k'_l)_{l\in\N}\subset \N$ of $(k_l)_{l\in\N}\subset \N$ such that $ F_{k'_l}(t)\to F_t\in X$.
\item If for a strictly monotone sequence $(k_l)_{l\in\N}\subset \N$ we have that
\begin{equation}\label{UniquenessLimitSlice}
     F_{k_l}(t_1)\to F_{t_1}\quad \text{ and }\quad
         F_{k_l}(t_2)\to F_{t_2}\,,
    \end{equation}
    then $F_{t_1}=F_{t_2}$.
\end{enumerate}

 Then there exist $F\in X$ and a strictly increasing sequence $(k_l)_l$ such that for a.e. $t\in[a,b]$:
    \begin{equation*}
        F_{k_l}(t)\to F\,.
    \end{equation*}
\end{lemma}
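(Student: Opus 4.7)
The plan is a diagonal-style argument anchored at a single point, which replaces the usual dense-set diagonal trick that is unavailable here since $X$ carries no topology beyond its abstract notion of convergence. Let $N\subset[0,1]$ be the null set on which hypothesis~(1) fails, set $T:=[0,1]\setminus N$, and note that $|T|=1$, so it suffices to prove the conclusion pointwise on $T$.

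First I would choose any anchor $t_\ast\in T$. Applying hypothesis~(1) at $t_\ast$ to the full sequence $(k)_{k\in\N}$ yields a strictly increasing sequence $(k_l)_{l\in\N}$ and some $F\in X$ with $F_{k_l}(t_\ast)\to F$. I claim that this $(k_l)$ and this $F$ already work, i.e.\ $F_{k_l}(t)\to F$ for every $t\in T$.

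To prove the claim, fix $t\in T$. By the contrapositive of property~(ii) of the notion of convergence, it suffices to show that every subsequence of $(F_{k_l}(t))_l$ admits a further subsequence converging to $F$. So let $(k_{l_m})_m$ be an arbitrary subsequence of $(k_l)$. By hypothesis~(1) applied at $t$ to $(k_{l_m})_m$, one can extract a sub-subsequence $(k_{l_{m_r}})_r$ and some $F_t\in X$ with $F_{k_{l_{m_r}}}(t)\to F_t$. Along the same sub-subsequence, property~(i) applied to the already-known convergence $F_{k_l}(t_\ast)\to F$ gives $F_{k_{l_{m_r}}}(t_\ast)\to F$. Now hypothesis~(2), applied to the strictly monotone sequence $(k_{l_{m_r}})_r$ with the pair $(t,t_\ast)$, forces $F_t=F$, closing the loop.

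The only non-standard aspect is the careful bookkeeping of three levels of subsequences, which is needed because the hypotheses supply uniqueness of limits only when both limits exist along the \emph{same} extracting sequence; this is precisely what the ``three-layer'' extraction (the fixed $(k_l)$, then $(k_{l_m})$, then $(k_{l_{m_r}})$) is designed to guarantee. A pleasant feature is that a single anchor point $t_\ast$ is enough, so no countable dense subset of $T$ needs to be selected, and no topological structure on $X$ is invoked beyond the abstract properties~(i)--(ii).
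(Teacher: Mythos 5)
Your argument is correct and is essentially the one in the paper: pick an anchor $t_\ast$, extract a subsequence $(k_l)$ converging at $t_\ast$ to some $F$, and then show convergence to $F$ at every other good $t$ by combining property (ii) with hypotheses (1) and (2). The paper phrases this as a proof by contradiction while you write it directly via the contrapositive of (ii), and you are slightly more explicit about the use of property (i); but the chain of extractions and the identification of the limit via hypothesis (2) are the same.
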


\begin{proof}
   We choose a strictly monotone sequence $(k_l)_{l\in\N}\subset \N$ such that $ F_{k_l}(t)\to  F_t$ for a specific $t\in[a,b]$. We assume by contradiction that there exists a set of positive measure $\Sigma\subset [a,b]$ such that for every  $s\in\Sigma$, $  F_{k_l}(s)$ is not converging to $ F_t$. Thus, for a.e. $s\in\Sigma$ there exists a subsequence $(k^{s}_l)_{l\in\N}\subset \N$ of $(k_l)_{l\in\N}\subset \N$, such that for no further subsequences of $( F_{k^{s}_l}(s))_{l\in\N}$ is converging to $ F_t$. But, by assumption, for a.e. $s\in\Sigma$, we can extract from $(F_{k^{s}_l}(s))_{l\in\N}$ a subsequence converging to $ F_s\in X$.
    By \eqref{UniquenessLimitSlice}, this is a contradiction.

\end{proof}

In the upcoming proposition, we will label sequences differently from the rest of the paper. Whereas elsewhere, we write $(y_\e)_{\e}$ for sequences and tacitly assume that the real parameter $\e>0$ represents a sequence $\e_k\downarrow 0$, we make this sequence explicit in the proposition. We do so in order  to avoid confusion about our choices of subsequences etc.

\begin{proposition}[Compactness]\label{prop:compactness}
Let $\e_k\downarrow 0$, and  $(u_{\eps_k},v_{\eps_k})_k\subset (L^1(\R^n))^2$ be a sequence such that  
\[
\sup_{k\in\N} I_{\eps_k}(u_{\eps_k},v_{\eps_k})<+\infty\,.
\]
Assume also that there exists $u\in BV(\R^n;\{-1,1\})$ such that $u_{\eps_k}\to u$ strictly in $BV_{\loc}(\R^n)$.
Then there exists  $v\in  BV(S;\{-1,1\})$, with $S=\llbracket J_u,*\nu_u,1\rrbracket$, such that, up to subsequence,  
  \begin{equation}
  \label{eq:comp_str_mfp}
    \left(|\nabla (\phi\circ u_{\eps_k})|\L^n,v_{\eps_k}\right)\to (\sigma \H^{n-1}\res J_u, v)\quad\text{ in }L^q
  \end{equation}
as measure-function pairs for every $q\in [1,p)$, with $\phi$ as in \eqref{phi}. Moreover, we also have for a.e. $t\in[0,\sigma]$ that 
\begin{equation}
\label{eq:comp_str_mfp2}
        \int_{\partial_* E_\eps^t} \varphi(x,v_\eps)\dHn\to \int_{J_u} \varphi(x,v)\dHn \text{ for every }\varphi\in C^0_c(\R^n\times\R)\,,
    \end{equation}
    where $E_\eps^t=\{\phi\circ u_\eps > t\}$.

\end{proposition}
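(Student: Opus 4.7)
The strategy is a slicing/coarea decomposition along the level sets of $U_\eps:=\phi\circ u_\eps$, applying Theorem~\ref{thm1OR} slicewise, and then identifying the slice limits via a blow-up argument before reassembling through a diagonal extraction. Set $E_\eps^t=\{U_\eps>t\}$. Young's inequality yields $|\nabla U_\eps|\le\frac{W(u_\eps)}{\eps}+\frac{\eps}{2}|\nabla u_\eps|^2$, so by the coarea formula
\[
I_\eps(u_\eps,v_\eps)\;\geq\;\int_0^\sigma G_\eps(t)\,dt,\qquad G_\eps(t):=\int_{\partial_*E_\eps^t}\Bigl(\tfrac{W(v_\eps)}{\eps}+\tfrac{\eps}{2}|\nabla v_\eps|^2\Bigr)d\H^{n-1}.
\]
Hence $\sup_\eps\int_0^\sigma G_\eps\,dt<\infty$ and, by Fatou, $\liminf_\eps G_\eps(t)<\infty$ for a.e.\ $t\in(0,\sigma)$. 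Moreover, the strict $BV_\loc$-convergence $u_\eps\to 2\chi_E-1$, combined with the BV coarea formula and lower semicontinuity of perimeter, yields $\chi_{E_\eps^t}\to\chi_E$ strictly in $BV$ for a.e.\ $t\in(0,\sigma)$. On the full-measure set $\mathcal{G}\subset(0,\sigma)$ where both properties hold, Lemma~\ref{lem:Hdefslicing} gives $v_\eps\in H^{1,2}(S_{\eps,t})$, and Theorem~\ref{thm1OR} produces, along a (possibly $t$-dependent) $\eps$-subsequence, a limit $v_t\in BV(S;\{-1,1\})$ with $S=\llbracket J_u,\star\nu_u,1\rrbracket$, such that
\[
(\H^{n-1}\res\partial_*E_\eps^t,v_\eps)\to(\H^{n-1}\res J_u,v_t)\quad\text{in }L^q\text{ as measure--function pairs}.
\]

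The main obstacle is the uniqueness across slices: one must show $v_{t_1}=v_{t_2}$ $\H^{n-1}$-a.e.\ on $J_u$ for a.e.\ $t_1,t_2\in\mathcal{G}$. To obtain the necessary uniformity across $t$, I invoke Lemma~\ref{Rear} on $(G_\eps)_\eps$, which produces measure-preserving rearrangements $h_\eps:[0,\sigma]\to[0,\sigma]$ such that $G_\eps(h_\eps(t))$ is uniformly bounded on every compact subinterval of $[0,\sigma)$ and $h_\eps(t)$ stays in a compact subinterval of $(0,\sigma)$. The identification of the slice limit then follows from a blow-up at $\H^{n-1}$-a.e.\ $x_0\in J_u$: after rescaling by $\eps$ along $\nu_u(x_0)$, all slices $\{U_\eps=t\}_{t\in(0,\sigma)}$ lie inside a single $O(1)$-thick layer, while the product structure of $I_\eps$ gives $\int\eps|\nabla v_\eps|^2|\nabla U_\eps|\,dx\leq 2C$; together with the equipartition heuristic $|\nabla U_\eps|\sim 1/\eps$ on the diffuse interface, a Poincar\'e-type inequality along normal trajectories shows that the restrictions of $v_\eps$ to the two slices differ by $o(1)$ in $L^2$. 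Hence $v_{t_1}=v_{t_2}$ $\H^{n-1}$-a.e.\ on $J_u$; denote the common value by $v$.

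With $v$ identified, Lemma~\ref{ConvRear} (taking $X$ to be the space of measure--function pairs over $\R^n$ with values in $\{-1,1\}$ and convergence being the $L^q$-convergence as measure--function pairs) furnishes a single subsequence $(\eps_l)$ for which the slice convergence holds for a.e.\ $t\in(0,\sigma)$, giving~\eqref{eq:comp_str_mfp2}. For~\eqref{eq:comp_str_mfp}, I would rewrite
\[
\int\varphi(x,v_{\eps_l})\,|\nabla U_{\eps_l}|\,dx=\int_0^\sigma\!\!\int_{\partial_*E_{\eps_l}^t}\!\!\varphi(x,v_{\eps_l})\,d\H^{n-1}\,dt\qquad(\varphi\in C_c^0(\R^n\times\R))
\]
via coarea, and pass to the limit in $t$ by dominated convergence, using the pointwise slicewise convergence obtained above together with the $L^1((0,\sigma))$-domination of $t\mapsto\Per(E_{\eps_l}^t)$ arising from the strict $BV_\loc$-convergence of $u_{\eps_l}$ and the BV coarea formula. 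The limit equals $\sigma\int_{J_u}\varphi(x,v)\,d\H^{n-1}$, which is~\eqref{eq:comp_str_mfp}.
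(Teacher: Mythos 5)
Your overall architecture — slicing via coarea, Theorem~\ref{thm1OR} slicewise, uniqueness of the slice limit by blow-up, reassembly via Lemmas~\ref{Rear} and~\ref{ConvRear} — matches the paper's. But the central uniqueness step (that $v_{t_1}=v_{t_2}$) is where your sketch breaks down, and the mechanism you invoke is not the one that makes the argument close.

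You write that ``after rescaling by $\eps$ along $\nu_u(x_0)$, all slices $\{U_\eps=t\}$ lie inside a single $O(1)$-thick layer'' and appeal to an ``equipartition heuristic'' $|\nabla U_\eps|\sim 1/\eps$, feeding both into a Poincar\'e estimate via the bound $\int\eps|\nabla v_\eps|^2\,|\nabla U_\eps|\,dx\le C$. Neither ingredient is available. Nothing in the hypotheses controls the width of the layer $\{t_1\le U_\eps\le t_2\}$: you do not have a bound on $M_\eps(u_\eps)$, so a Chebyshev argument cannot force the layer measure to be $O(\eps)$, and equipartition in this setting is a theorem requiring the Willmore-type bound of Section~4, not a consequence of the assumptions of Theorem~\ref{thm1}. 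The $L^2$-based Poincar\'e inequality $|v(y_1)-v(y_2)|^2\le|y_1-y_2|\int|\partial_y v|^2$ is not scale-invariant, so without the width control it gives no contradiction. The paper's proof sidesteps all of this: it retains the $W(u_k)$ factor (which is bounded below by $c(t_1,t_2,W)$ on the layer by definition of $\phi$), applies the Modica--Mortola inequality to $v_k$ to pass to $\frac{c}{\eps_k}\int_{\text{layer}}|\nabla(\phi\circ v_k)|$, and then Fubinizes along the normal direction; the resulting one-dimensional integral $\int|\partial_y(\phi\circ v_k)|\,\mathrm dy\ge\phi(1-\delta)-\phi(\delta-1)$ is an $L^1$-of-derivative bound, independent of the length of the interval. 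The blow-up produces a set $D_1$ of normal lines of measure $\gtrsim\rho^{n-1}$ on which $v_k$ transitions, and the product with $c/\eps_k$ diverges. Replacing your $L^2$ Poincar\'e step by this $L^1$ argument (and dropping the equipartition and $O(\eps)$-width claims entirely) is what closes the gap.

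Two secondary omissions: (i) Lemma~\ref{ConvRear} is applied to the rearranged family $t\mapsto(\H^{n-1}\res\partial_*E_\eps^{h_\eps(t)},v_\eps)$, so what you obtain directly is a.e.\ convergence along the rearranged slices; the paper then uses property \eqref{prop3} of Lemma~\ref{Rear} (the equimeasurability of $h_\eps$) plus dominated convergence in $L^1(0,\sigma)$ to transfer the statement back to the original slices $t$, and you need this step to land on \eqref{eq:comp_str_mfp2}. (ii) For the strong $L^q$ measure-function pair convergence \eqref{eq:comp_str_mfp} you establish the convergence of $\int\varphi(x,v_\eps)|\nabla U_\eps|$ by dominated convergence, but you do not verify the tightness condition $\lim_{j\to\infty}\int_{\S_{k,j}}|v_k|^q|\nabla U_k|\,\mathrm dx=0$ uniformly in $k$, which is part of Definition~\ref{def:measure-function-pairs}~(ii); the paper handles this by Prokhorov together with the bound $\int|v_k|^p|\nabla U_k|\lesssim\Lambda$ coming from the $W(v_k)$ term in $I_\eps$.
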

\begin{proof}
We restrict ourselves to a subsequence (that we do not relabel) such that $I_{\eps_k}(u_{\eps_k},v_{\eps_k})$
converges to the $\liminf$ in \eqref{liminf-inequality-1}.

In order to alleviate the notation, we will write $u_k$ and $v_k$ instead of $u_{\eps_k}$ and $v_{\eps_k}$,
\[
U_k\defas\phi\circ u_{k}\,.
\]
For the superlevel sets of $U_k$, we introduce the notation
\begin{equation}\label{level-sets}
    E_k^s\defas\{x\in\R^n\colon U_k>s\}\,.
\end{equation}

\smallskip

\noindent
{\textbf{Step 1: Slicing over $u_k$.}} By the chain rule and the Cauchy-Schwarz inequality with $\e_k$, 
\[
|\nabla U_k(x)|\leq \sqrt{2W(u_k(x))}|\nabla u_k(x)|\leq \frac{\e_k}{2}|\nabla u_k|^2+\frac1{\e_k}W(u_k(x))\,.
\]
Hence, using the coarea formula, 
\begin{equation}\label{eq:5}
	\begin{split}
	  \Lambda\ge	I_{\eps_k}(u_k,v_k)&\ge 
		\int_{\R^n }
		\left(
		\frac{W(v_k)}{\eps_k}+ \frac{\eps_k}{ 2}|\nabla v_k|^2
		\right) |\nabla U_k|\dx\\
&=\int_{-\infty}^{+\infty}\int_{\partial_* E_{k}^s} \left(
      \frac{W(v_k)}{\eps_k}+ \frac{\eps_k}{ 2}|\nabla v_k|^2
    \right)\dHn\ds\,.
			\end{split}
\end{equation}

\smallskip
\noindent

{\textbf{Step 2: Equality of the limit on slices.}}
We begin with the following observations:
\begin{enumerate}[label=\textbf{O.\arabic*}]

    \item  By the coarea formula \ref{CoAreaBV}, for a.e. $t\in [0,\sigma]$, the set $E^t_{k}$ is a set of finite perimeter for every $k\in\N$\,.
    \item \label{StrictConvSlice}  For a.e. $t\in[0,\sigma]$,  $\chi_{E^t_k}\to {\frac{u+1}{2}}$ in $L^1_{\rm loc.}(\R^n)$. By lower semicontinuity of the perimeter and the fact that 
\begin{equation*}
      \int_0^\sigma {\rm Per}(E^t_k)\dt \to  \sigma\H^{n-1}(J_u)\,,
\end{equation*}
we have that $E^t_k$ converges strictly in $BV_{\rm loc}(\R^n)$ to $\chi_{\frac{u+1}{2}}$ for a.e. $t\in[0,\sigma]$.
    \item \label{LevelSet} By \cite[Theorem 1.1]{MSZ}, for a.e. $t\in [0,\sigma]$,
    $\H^{n-1} (U_k^{-1}(t) \Delta \partial_*E^t_k  )=0$ for every $k\in\N$ for the precise representative of the Sobolev map $U_k$\,.
    \end{enumerate}

We define $\S\subset[0,\sigma]$ as the set of $t\in [0,\sigma]$, such that one of the previous properties does not hold. Hence,  we have that $|\S|=0$. 

We claim the following: If, for some $t_1,t_2\in [0,\sigma]\backslash \S$, $t_1<t_2$,  there exists a  subsequence $(k_l)_{l\in\N}$ and $v^{t_1},v^{t_2}\in  BV(S; \{-1,1\})$ such that
\begin{equation}\label{strong-t1-t2}
(\mathcal{H}^{n-1}\res\partial_*E^{t_i}_{k_l},v_{k_l})\to (\mathcal{H}^{n-1}\res J_u,v^{t_i})\text{ in } L^q\,, \ \forall \,1\le q< p\,, \quad \text{ for }l\to\infty
\end{equation} as measure-function pairs
for $i=1,2$, then  $v^{t_1}=v^{t_2}$ for $\mathcal{H}^{n-1}$ a.e. $x\in J_u$.
In order to alleviate the notation, we will assume that the subsequence is the sequence itself, such that the assumption becomes 
\begin{equation*}
(\mathcal{H}^{n-1}\res\partial_*E^{t_i}_{k},v_{k})\to (\mathcal{H}^{n-1}\res J_u,v^{t_i})\quad \text{  in } L^q\,, \ \forall \,1\le q< p\,, \quad \text{ for }k\to\infty\,.
\end{equation*}

We introduce
\begin{equation*}
	B:=\{x\in  J_u\colon v^{t_1}(x)\ne v^{t_2}(x)\}=
	\{x\in  J_u\colon \phi(v^{t_1}(x))\ne \phi(v^{t_2}(x))\}\,.
\end{equation*}
We will prove our claim by contradiction: Assume that $\mathcal{H}^{n-1}(B)>0$. Assume also, without loss of generality, that 
\begin{equation*}
	\tilde B:=\{x\in B \colon v^{t_1}(x)=1, v^{t_2}(x)= -1 \}= \{x\in B \colon \phi(v^{t_1}(x))=\phi(1),\, \phi(v^{t_2}(x))= \phi(-1) \}\,
\end{equation*}
satisfies  $\mathcal{H}^{n-1}(\tilde B)>0$. Let $\delta>0$.
By the properties of sets of finite perimeter under blow-up (see e.g.~\cite[Chapter 5.7]{evans1992measure})
for $\mathcal{H}^{n-1}$ a.e.~$x_0\in \tilde B$, we can find  $\rho_0>0$ such that for every $0<\rho<\rho_0$ it holds:
\begin{equation}\label{ApproxTangent}
\begin{cases}
       |\{u=1\}\cap B_\rho^+(x_0)|\geq \frac{1}{2}|B_\rho(x_0)|-\delta \textrm{ and }  |\{u=1\}\cap B_\rho^-(x_0)|\leq \delta\,, \\[1em]
     1-\delta\leq \dfrac{\mathcal{H}^{n-1}( J_u\cap B_\rho(x_0))}{ \omega(\pi,n-1)\rho^{n-1}}\leq 1+\delta\,,\\
\end{cases}
 \end{equation}  
where $B^{\pm}_\rho(x_0)$ are the half-balls separated by $D_0$, the $(n-1)$-dimensional unit disk  of radius $\rho$, of center $x_0$ and orthogonal to $\nu_u(x_0)$ and  $\omega(\pi,n-1)$ is the measure of the  $n-1$-dimensional unit ball.
By \eqref{StrictConvSlice},
\begin{equation}\label{LocalConv}
\mathcal{H}^{n-1}(\partial_*E_k^{t_i}\cap B_\rho(x_0))\to \mathcal{H}^{n-1}(J_u\cap B_\rho(x_0))\,,\quad \text{ for } i=1,2\,.
\end{equation}
This together with \eqref{ApproxTangent} imply
\begin{equation}\label{ApproxTangentE}
    1-2\delta\leq \dfrac{\mathcal{H}^{n-1}(\partial_*{E_k^{t_i}}\cap B_\rho(x_0))}{ \omega(\pi,n-1)\rho^{n-1}}\leq 1+2\delta\,,\quad \text{ for }i=1,2
\end{equation}
for $k\in\N$ large.
By the Lebesgue Differentiation Theorem,  up to possibly taking $\rho_0$ smaller,  we also have that:
\begin{equation}\label{CloseTo1}
  \dfrac{\mathcal{H}^{n-1}(\{x\in  J_u\cap B_\rho(x_0), v^{t_1}(x)\geq 1-\delta\})}{ \omega(\pi,n-1)\rho^{n-1}}\geq 1-\delta\,,  
\end{equation}
and
\begin{equation*}
    \dfrac{\mathcal{H}^{n-1}(\{x\in  J_u\cap B_\rho(x_0), v^{t_2}(x)\leq\delta -1 \})}{ \omega(\pi,n-1)\rho^{n-1}}\geq 1-\delta\,.
\end{equation*}

Moreover, by the strong convergence \eqref{strong-t1-t2}  we have:
$$\int_{\partial_*E_k^{t_i}}\varphi(x,v_k(x))\dHn\to \int_{J_u}\varphi(x,v^{t_i}(x))\dHn\,,\quad \text{ for } i=1,2$$
for every $\varphi\in C^0_c(\R^n\times \R).$
Hence, by a density argument we obtain
\begin{equation*}
    \mathcal{H}^{n-1}(\{x\in \partial_*E_k^{t_1}\cap B_\rho(x_0), v_k(x)\geq 1-\delta\}) \to \mathcal{H}^{n-1}(\{x\in  J_u\cap B_\rho(x_0), v^{t_1}(x)\geq 1-\delta\})\,,
\end{equation*}
and
\begin{equation*}
    \mathcal{H}^{n-1}(\{x\in \partial_*E_k^{t_2}\cap B_\rho(x_0), v_k(x)\leq \delta -1 \}) \to \mathcal{H}^{n-1}(\{x\in  J_u\cap B_\rho(x_0), v^{t_2}(x)\leq \delta -1 \})\,.
\end{equation*}
Therefore, for $k$ large enough and using \eqref{ApproxTangent},  \eqref{LocalConv} and \eqref{CloseTo1}
it holds
\begin{equation}\label{CloseTo1E}
        \dfrac{ \mathcal{H}^{n-1}(\partial_*E_k^{t_1}\cap B_\rho(x_0)\cap\{v_k(x)\ge 1-\delta\})}{\mathcal{H}^{n-1}(\partial_*E_k^{t_1}\cap B_\rho(x_0)) }
        \ge
1-2\delta\,
\end{equation}
and
\begin{equation}\label{CloseTo0E}
   \dfrac{\mathcal{H}^{n-1}(\partial_*E_k^{t_2}\cap B_\rho(x_0)\cap\{ v_k(x)\leq \delta -1 \})}{\mathcal{H}^{n-1}(\partial_*E_k^{t_2}\cap B_\rho(x_0)) }\geq 1-2\delta\,. 
\end{equation}


By the $L^1_{\rm loc.}$ convergence of  $\chi_{E_k^{t_i}}$ to $(1+u)/2$ for $i=1,2$ and \eqref{ApproxTangent}, for $k$ large, we have:
\begin{equation*}
     |E_k^{t_i}\cap B_\rho^+(x_0)|\geq \frac{1}{2}|B_\rho(x_0)|-2\delta \,,\quad \text{ for }i=1,2\,. 
\end{equation*}
By Fubini's theorem we have:
\begin{equation*}
    |E_k^{t_i}\cap B_\rho^+(x_0)|=\int_{D_0} \int_{ x+\R\nu_u(x_0)}\chi_{E_k^{t_i}\cap B_\rho^+(x_0)}(x,y) \dy \dHn(x)\,,\quad \text{ for }i=1,2\,. 
\end{equation*}
Hence, for $\delta$ chosen sufficiently small, we obtain that
\begin{equation*}
   \H^{n-1}\big( \{x\in D_0\,, x+\R\nu_u(x_0) \textrm{ intersects } E_k^{t_i}\cap B_\rho(x_0) \textrm{ for } i=1,2 \}\big) \geq \frac{5}{6}\omega(\pi,n-1)\rho^{n-1}.
\end{equation*}
In a similar way, the estimate
\begin{equation*}
    |E_k^{t_i}\cap B_\rho^-(x_0)|\leq 2\delta\quad \textrm{ for } i=1,2
\end{equation*}
implies that
\begin{equation*}
      \H^{n-1}\big( \{x\in D_0\,, x+\R\nu_u(x_0) \textrm{ intersects } ({E_k^{t_i}})^c\cap B_\rho(x_0) \textrm{ for } i=1,2 \}\big) \geq \frac{5}{6}\omega(\pi,n-1)\rho^{n-1}\,,
\end{equation*}
where $({E_k^{t_i}})^c$ denotes the complement of ${E_k^{t_i}}$.
Hence,
\begin{equation*}
   \H^{n-1}\big( \{x\in D_0\,, x+\R\nu_u(x_0) \textrm{ intersects } \partial{E_k^{t_i}}\cap B_\rho(x_0) \textrm{ for } i=1,2 \}\big) \geq \frac{2}{3}\omega(\pi,n-1)\rho^{n-1}.
\end{equation*}
Since $U_k$ is a Sobolev function, $U_k$ is absolutely continuous on $x+\R\nu_u(x_0)$ for $\H^{n-1}$ a.e. $x\in D_0$ and we get 
\begin{equation*}
   \H^{n-1}\big( \{x\in D_0\,, x+\R\nu_u(x_0) \textrm{ intersects } U_k^{-1}(t_i)\cap B_\rho(x_0) \textrm{ for } i=1,2 \}\big) \geq \frac{2}{3}\omega(\pi,n-1)\rho^{n-1}.
\end{equation*}
By \eqref{LevelSet} it follows:
\begin{equation}\label{LargeProjection}
   \H^{n-1}\big( \{x\in D_0\,, x+\R\nu_u(x_0) \textrm{ intersects } \partial_*{E_k^{t_i}}\cap B_\rho(x_0) \textrm{ for } i=1,2 \}\big) \geq \frac{2}{3}\omega(\pi,n-1)\rho^{n-1}.
\end{equation}
Let us call $D_1$ the set of points $x'\in D_0$ such that $x'+\R\nu_u(x_0)$ intersects $$\{x\in \partial_*E_k^{t_1}\cap B_\rho(x_0), v_k(x)\geq 1-\delta\}\quad\textrm{ and }\quad\{x\in \partial_*E_k^{t_2}\cap B_\rho(x_0), v_k(x)\leq \delta -1 \}\,.$$ Then, for $k$ large enough, by  \eqref{CloseTo1E}, \eqref{CloseTo0E} and \eqref{LargeProjection}, we obtain that 
$$\H^{n-1}(D_1)\geq \frac{1}{2} \omega(\pi,n-1)\rho^{n-1}.$$
We are ready to prove the contradiction. We have
\begin{equation*}
\begin{split}
     {\Lambda} \geq I_{\eps_k}(u_k,v_k)
 \ge 
		\int_{\{t_1\leq U_k\leq t_2\}} \left(
		\frac{W(v_k)}{\eps_k}+ \frac{\eps_k}{ 2}|\nabla v_k|^2
		\right)
	\frac{W(u_k)}{\e_k}\dx
 \,.
\end{split}
\end{equation*}
Since $ 0<t_1<t_2<\sigma$, we have that
\[
W(\phi^{-1}(t))\geq c(W,t_1,t_2) \text{ for  }t\in[t_1,t_2]\,.
\]
Thus, 
\begin{equation*}
	\begin{split}
		{\Lambda}& \geq \frac{c(t_1,t_2,W)}{\eps_k}\int_{\{t_1\leq U_k\leq  t_2\}} \left(
		\frac{W(v_k)}{\eps_k}+ \frac{\eps_k}{ 2}|\nabla v_k|^2
		\right)\dx\\
		&\geq \frac{c(t_1,t_2,W)}{\eps_k}\int_{\{t_1\leq U_k\leq  t_2\}\cap B_\rho(x_0)} |\nabla (\phi(v_k))|\dx\\
		&\geq \frac{c(t_1,t_2,W)}{\eps_k}\int_{D_1}\int_\R \chi_{\{t_1\leq U_k\leq  t_2\}\cap B_\rho(x_0)}(x'+y \nu_u(x_0)) |\nabla (\phi(v_k))(x'+y\nu_u(x_0))|\dy\d\H^{n-1}(x')\\
  & \geq \frac{c(t_1,t_2,W)}{\eps_k}\int_{D_1}\int_\R \chi_{\{t_1\leq U_k\leq  t_2\}\cap B_\rho(x_0)}(x'+y \nu_u(x_0)) |\partial_y (\phi(v_k))(x'+y\nu_u(x_0))|\dy\d\H^{n-1}(x')\,.
\end{split} 
\end{equation*}
Then, by integrating over $y$ and by definition of $D_1$ we obtain
\begin{equation*}
  \Lambda\geq  \frac{c(t_1,t_2,W)}{\eps_k} \int_{D_1} \phi(1-\delta)-\phi(\delta-1)\dx'.	
\end{equation*}
This last quantity goes to $+\infty$ when $k\to\infty$. Hence, $\H^{n-1}(B)=0$, which proves the claimed equality $v^{t_1}=v^{t_2}$.

\smallskip

\noindent
{\textbf{Step 3: Existence of a convergent  subsequence.}} 
We claim that there exists a strictly monotone sequence $(k_l)_{l\in\N}\subset \N$ and $v\in BV(S;\{-1,1\})$ such that for a.e. $t\in(0,\sigma)$:
\begin{equation}
\label{eq:wstarconv}
	\int_{\partial_*E^t_k} \varphi(x, v_k) \dHn\to \int_{J_u} \varphi(x, v) \dHn \text{ for every }\varphi\in C^0_c(\R^n\times\R)\,.
\end{equation}

\medskip

Indeed,  we apply Lemma \ref{Rear}  with:
\begin{equation*}
    G_k(t):=\int_{\partial_*E^t_k} \bigg( \frac{W(v_k)}{\eps_k}+\frac{\eps_k}{2}|\nabla v_k|^2\bigg) \dHn\,,
\end{equation*}
for every $t\in [0,\sigma]$. Hence, there exists a sequence of functions $(h_k)_{k\in\N}$ mapping $[0,\sigma]$ into itself such that for a.e. $t\in [0,\sigma]$:
\begin{equation}\label{BoundSliceRear}
    \sup_{k\in\N} \int_{\partial_*E^{h_k(t)}_k} \bigg( \frac{W(v_k)}{\eps_k}+\frac{\eps_k}{2}|\nabla v_k|^2\bigg) \dHn <+\infty\,.
\end{equation}

Moreover, for a.e. $t\in[0,\sigma]$, there exist $0<s_1\leq h_k(t)\leq s_2<\sigma$ $\forall k\in\N$, such that $\chi_{E^{s_1}_k}$ and $\chi_{E^{s_2}_k}$ converge to ${\frac{u+1}{2}}$ in $L^1_{\rm loc}(\R^n)$ when $k\to+\infty$. Since $\chi_{E^{s_2}_k}\le \chi_{E^{h_k(t)}_k}\le \chi_{E^{s_1}_k}$,  $\chi_{E^{h_k(t)}_k}\to (1+u)/2$ in $L^1_{\rm loc}(\R^n)$. By lower semicontinuity of the perimeter and the fact that 
\begin{equation*}
     \int_0^\sigma {\rm Per}(E^{h_k(t)}_k)\dt= \int_0^\sigma {\rm Per}(E^t_k)\dt \to  \sigma\H^{n-1}(J_u)\,,
\end{equation*}
we have that $E^{h_k(t)}_k$ converges strictly in $BV_{\rm loc}(\R^n)$ to ${\frac{u+1}{2}}$ for a.e. $t\in[0,\sigma]$.

Next we will apply Lemma  \ref{ConvRear}. Using the notation from the hypothesis of that Lemma, we set $X=\mathcal M(\R^{n+1})$, and as convergence ``$\to$'' on $X$ is the we take the weak-* convergence on $\mathcal M(\R^{n+1})$. The map $F_k:[0,\sigma]\to X=\mathcal M(\R^{n+1})$ is defined by 
\begin{equation*}
   \int_{\R^n\times\R} \varphi\,\d (F_k(t))= \int_{\partial_*E^{h_{k}(t)}_{k}} \varphi(x, v_{k}) \dHn \text{ for every }\varphi\in C^0_c(\R^n\times\R)\,.
\end{equation*}

By Lemma \ref{lem:Hdefslicing}, \eqref{BoundSliceRear} and  the strict convergence of $E^{h_k(t)}_{k_t}$   in $BV_{\rm loc}(\R^n)$ to $(1+u)/2$, we can apply  Theorem \ref{thm1OR} (\cite[Theorem 1]{OR}) to the sequence $(F_k(t))_{k\in\N}$ for a.e. $t\in [0,\sigma]$ and for any subsequence. Hence, $(F_k)_{k\in\N}$ satisfies the first assumption of Lemma \ref{ConvRear}. By \textbf{Step 2}, it also satisfies the second assumption. 

Therefore, by Lemma \ref{ConvRear}, there exist $v\in BV(S;\{-1,1\})$ and  a strictly monotone sequence $(k_l)_{l\in\N}\subset \N$ such that for a.e. $t\in[0,\sigma]$ :
\begin{equation*}
	\int_{\partial_*E^{h_{k_l}(t)}_{k_l}} \varphi(x, v_{k_l}) \dHn\to \int_{J_u} \varphi(x, v) \dHn \text{ for every }\varphi\in C^0_c(\R^n\times\R)\,.
\end{equation*}

In order to prove \ref{eq:wstarconv},  it remains to get rid of the rearrangement $h_k$.
To do so we observe that by the third property of $(h_k)_{k\in\N}$ in Lemma \ref{Rear}, we have that for every $\varphi\in C^0_c(\R^n\times\R)$, and for every $l\in\N$, the quantity
\begin{equation*}
   I_1^l:= \int_0^\sigma \bigg|\int_{\partial_*E^{h_{k_l}(t)}_{k_l}} \varphi(x, v_{k_l}) \dHn- \int_{J_u} \varphi(x, v) \dHn\bigg| \dt
    \end{equation*}
    is equal to
    \begin{equation*}
   I_2^l:= \int_0^\sigma \bigg|\int_{\partial_*E^{t}_{k_l}} \varphi(x, v_{k_l}) \dHn- \int_{J_u} \varphi(x, v) \dHn\bigg| \dt\,.
\end{equation*}
But for a.e. $t\in[0,\sigma]$, 
\begin{equation*}
    \bigg|\int_{\partial_*E^{t}_{k_l}} \varphi(x, v_{k_l}) \dHn- \int_{J_u} \varphi(x, v) \dHn\bigg|\leq \|\varphi\|_{L^\infty}\big({\rm Per}(E^t_{{k_{l}}})+\H^{n-1}(J_u)\big)\,,
 \end{equation*}    
     and \begin{equation*}\int_0^\sigma\|\varphi\|_{L^\infty}{\rm Per}(E^t_{k_l}) \to \sigma\|\varphi\|_{L^\infty} \H^{n-1}(J_u)\,.
\end{equation*}
Hence, by the dominated convergence theorem, $\lim\limits_{l\to+\infty}I_2^l=\lim\limits_{l\to+\infty}I_1^l=0$. Thus we have shown that for all $\varphi\in C^0_c(\R^n\times \R)$, the sequence of functions
\[
t\mapsto \int_{\partial_* E_{k_l}^t}\varphi(x,v_{k_l}(x))\d\H^{n-1}(x)
\]
converges in $L^1(0,\sigma)$ for $l\to\infty$ to the constant function 
\[
t\mapsto \int_{J_u}\varphi(x,v_{k_l}(x))\d\H^{n-1}(x)\,.
\]
Taking further subsequences that converge for almost every $t\in (0,\sigma)$ for a dense countable subset of functions $\varphi\in C^0_c(\R^n\times \R)$ and extracting a diagonal sequence yields the subsequence fulfilling \ref{eq:comp_str_mfp2}.

\smallskip

\noindent
{\textbf{Step 4: Measure-function pair convergence.}}
Finally, we prove that the sequence from the previous step also fulfills \ref{eq:comp_str_mfp}. 
    We first have to prove that the strictly monotone sequence $(k_l)_{l\in\N}\subset \N$ and $v\in BV(S;\{-1,1\})$ obtained in \textbf{Step 3} satisfy for every $\varphi\in C^0_c(\R^n\times \R)$:
    \begin{equation*}
        \lim_{l\to\infty}\int_{\R^n}\varphi(x,v_{k_l}(x))|\nabla U_{k_l}|\dx=\sigma\int_{J_u}\varphi(x,v(x))\dHn\,.    \end{equation*}
    Indeed, by the coarea formula, we have that
    \begin{equation*}
        \int_{\R^n}\varphi(x,v_{k_l}(x))|\nabla U_{k_l}|\dx=\int_\R \int_{\partial_*E_{{k_l}}^t} \varphi(x,v_{k_l}(x)) \dHn \dt\,.
    \end{equation*}
    For a.e. $t\in(0,\sigma)$,
    \begin{equation*}
        \int_{\partial_*E_{{k_l}}^t} \varphi(x,v_{k_l}(x)) \dHn\to \int_{J_u} \varphi(x,v(x)) \dHn\,,
    \end{equation*}
    and for a.e. $t\notin(0,\sigma)$,
    \begin{equation*}
        \int_{\partial_*E_{{k_l}}^t} \varphi(x,v_{k_l}(x)) \dHn\to 0\,.
    \end{equation*}
    Moreover,
    \begin{equation*}
         \int_{\partial_*E_{{k_l}}^t} \varphi(x,v_{k_l}(x)) \dHn\leq \|\varphi\|_{L^\infty}{\rm Per}(E_{{k_l}}^t)\,,
    \end{equation*}
    for a.e. $t\in \R$
    \begin{equation*}
      \lim_{l\to\infty}  \|\varphi\|_{L^\infty}{\rm Per}(E_{{k_l}}^t)=\|\varphi\|_{L^\infty} \mathcal{H}^{n-1}(J_u)\chi_{(0,1)}(t)
    \end{equation*}
    and
    \begin{equation*}
        \lim_{l\to\infty} \int_\R \|\varphi\|_{L^\infty}\Per(E_{{k_l}}^t) \dt= \|\varphi\|_{L^\infty} \mathcal{H}^{n-1}(J_u).
    \end{equation*}
    Hence, by the dominated convergence theorem,
    \begin{equation*}
        \lim_{l\to\infty}\int_{\R^n}\varphi(x,v_{k_l}(x))|\nabla U_{k_l}|\dx= \sigma \int_{J_u}\varphi(x,v(x))\dHn\,.
    \end{equation*}
It remains  to show $\lim_{j\to\infty}\int_{\S_{k_l,j}}|v_{k_l}|^q|\nabla U_{k_l}|\d x=0$ uniformly in $l$, where  $\S_{k_l,j}:=\{x\in\R^n:|x|\geq j \text{ or } |v_{k_l}(x)|\geq j\}$. 
Since $|\nabla U_{k_l}|\L^n\wsto \sigma \H^{n-1}\res J_u$, we have by Prokhorov's theorem (see \cite[Theorem 8.6.2]{bogachev2007measure})
\[
\lim_{j\to\infty}\int_{\{x:|x|\geq j\}}|\nabla U_{k_l}|\d x= 0\quad\text{ uniformly in } l\in\N\,.
\]
Hence (supposing $\e_{k_l}\in (0,1)$) 
\[
  \begin{split}
    \limsup_{j\to\infty}&\int_{\{x:|x|\geq j\}}|v_{k_l}|^q|\nabla U_{k_l}|\d x\\
&\leq \limsup_{j\to\infty} \left(\int_{\{x:|x|\geq j\}}|v_{k_l}|^p|\nabla U_{k_l}|\d x\right)^{q/p}\left(\int_{\{x:|x|\geq j\}}|\nabla U_{k_l}|\d x\right)^{1-q/p}\\
&\leq C\limsup_{j\to\infty} \left(\int_{\{x:|x|\geq j\}}\frac{W(v_{k_l})}{\e_{k_l}}\left(\frac{\e_{k_l}}{2}|\nabla u_{k_l}|^2+\frac{W(u_{k_l})}{\e_{k_l}}\right)\d x\right)^{q/p}\left(\int_{\{x:|x|\geq j\}}|\nabla U_{k_l}|\d x\right)^{1-q/p}\\
&\leq C\limsup_{j\to\infty}\Lambda^{q/p}\left(\int_{\{x:|x|\geq j\}}|\nabla U_{k_l}|\d x\right)^{1-q/p}\\
&=0 \quad\text{ uniformly in } l\in\N\,.
  \end{split}
\]
Furthermore, using again the same estimates,
\[
  \begin{split}
    \limsup_{j\to\infty}\int_{\{x:v_{k_l}(x)\geq j\}}|v_{k_l}|^q|\nabla U_{k_l}|\d x
&\leq \limsup_{j\to\infty}j^{q-p} \int_{\{x:v_{k_l}(x)\geq j\}}|v_{k_l}|^p|\nabla U_{k_l}|\d x\\
&\leq \limsup_{j\to\infty}j^{q-p} \Lambda\,.
\end{split}
\]
This completes the proof of \eqref{eq:comp_str_mfp}.
\end{proof}
%

%
We next prove the $\liminf$ inequality.
\begin{proposition}[Lower bound]\label{prop:lower_bound}
Let 
$((u_\eps,v_\eps))_\e\subset (L^1_\loc(\R^n))^2$ be a sequence that converges to $(u,v)\in BV_\loc(\R^n;\{-1,1\})\times BV(S;\{-1,1\})$, with $S=\llbracket J_u,\star\nu_u,1\rrbracket$ in the following sense: $u_\eps\to u$ strictly in $BV_\loc(\R^n)$ and for a.e. $t\in[0,\sigma]$:
  \begin{equation}\label{lower_bound_slice}
        \int_{\partial_* E_\eps^t} \varphi(x,v_\eps)\dHn\to \int_{J_u} \varphi(x,v)\dHn \text{ for every }\varphi\in C^0_c(\R^n\times\R)\,,
    \end{equation}
    where $E_\eps^t=\{\phi\circ u_\eps > t\}$. Then 
\begin{equation*}
	\liminf_{\eps\searrow0}I_\eps(u_\eps,v_\eps)\ge I(u,v)\,.
\end{equation*}
\end{proposition}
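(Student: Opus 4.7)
The plan is to reduce the lower bound for $I_\eps$ to a slice-by-slice estimate via the coarea formula, and then to invoke the lower-bound portion of Theorem \ref{thm1OR} on almost every super-level set of $U_\eps := \phi\circ u_\eps$. I may assume $\liminf_{\eps\searrow 0} I_\eps(u_\eps,v_\eps) = \Lambda < +\infty$ (otherwise there is nothing to prove) and pass to a subsequence $\eps_k$ realising this $\liminf$. Using the pointwise inequality $|\nabla U_{\eps_k}| \le \frac{\eps_k}{2}|\nabla u_{\eps_k}|^2 + \eps_k^{-1} W(u_{\eps_k})$ and the coarea formula, as in Step 1 of the proof of Proposition \ref{prop:compactness}, I obtain
\begin{equation*}
I_{\eps_k}(u_{\eps_k},v_{\eps_k}) \ge \int_{\R^n}\left(\frac{W(v_{\eps_k})}{\eps_k} + \frac{\eps_k}{2}|\nabla v_{\eps_k}|^2\right)|\nabla U_{\eps_k}|\dx = \int_0^\sigma G_{\eps_k}(t)\dt,
\end{equation*}
where $G_\eps(t) := \int_{\partial_* E_\eps^t}\bigl(\frac{W(v_\eps)}{\eps} + \frac{\eps}{2}|\nabla v_\eps|^2\bigr)\dHn$.

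Next I would establish the slice-wise bound
\begin{equation*}
\liminf_{k\to\infty} G_{\eps_k}(t) \ge \sigma\,\H^{n-2}(J_v) \qquad \text{for a.e.\ } t\in(0,\sigma).
\end{equation*}
As in observation \textbf{O.2} in Step 2 of the proof of Proposition \ref{prop:compactness}, the strict convergence $u_{\eps_k}\to u$ in $BV_\loc(\R^n)$, together with the coarea identity $\int_0^\sigma \Per(E_{\eps_k}^t)\dt \to \sigma\,\H^{n-1}(J_u)$ and the lower semicontinuity of the perimeter, forces the slices $\chi_{E_{\eps_k}^t}$ to converge strictly in $BV_\loc(\R^n)$ to $\chi_E$ for a.e.\ $t$; by Lemma \ref{lem:Hdefslicing}, $v_{\eps_k}\in H^{1,2}(S_{\eps_k,t})$ for a.e.\ $t$. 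Fix such a $t$ where also \eqref{lower_bound_slice} holds. If $L := \liminf_k G_{\eps_k}(t) = +\infty$ the inequality is trivial; otherwise extract a subsequence (not relabelled) with $G_{\eps_k}(t)\to L$. Along this uniformly bounded sequence, Theorem \ref{thm1OR} supplies a further subsequence and some $\tilde v\in BV(S;\{-1,1\})$ along which $(\H^{n-1}\res\partial_*E_{\eps_k}^t, v_{\eps_k})\to (\H^{n-1}\res J_u, \tilde v)$ strongly as measure-function pairs and $L \ge \sigma\,\H^{n-2}(J_{\tilde v})$. The assumption \eqref{lower_bound_slice}, which holds along the original sequence and hence along this sub-subsequence, identifies the limit via $\int_{J_u}\varphi(x,\tilde v)\dHn = \int_{J_u}\varphi(x,v)\dHn$ for every $\varphi\in C^0_c(\R^n\times\R)$, so $\tilde v = v$ $\H^{n-1}$-a.e.\ on $J_u$, and the slice-wise bound follows.

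The proof concludes with Fatou's lemma:
\begin{equation*}
\Lambda = \liminf_{k\to\infty}\int_0^\sigma G_{\eps_k}(t)\dt \ge \int_0^\sigma \liminf_{k\to\infty} G_{\eps_k}(t)\dt \ge \sigma^2\,\H^{n-2}(J_v) = I(u,v).
\end{equation*}
The main (and essentially only) non-routine step is the identification $\tilde v = v$ from the strong measure-function pair limit produced by Theorem \ref{thm1OR} and the (a priori weaker) slice hypothesis \eqref{lower_bound_slice}; it is precisely here that the slice-wise convergence assumption is used, and without it one could at best conclude a lower bound by some function of $\tilde v$ depending on subsequences.
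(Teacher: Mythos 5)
Your argument follows the paper's own proof step for step: lower bound $I_\eps$ by $\int |\nabla U_\eps|\bigl(\tfrac{W(v_\eps)}{\eps}+\tfrac{\eps}{2}|\nabla v_\eps|^2\bigr)\dx$ via the Modica--Mortola trick, slice with the coarea formula, pass the $\liminf$ inside the $t$-integral by Fatou, apply Theorem~\ref{thm1OR} on each good slice, and identify the slice limit $\tilde v$ with $v$ using hypothesis~\eqref{lower_bound_slice}. The only cosmetic difference is that you first extract a subsequence realising $\liminf I_\eps$ whereas the paper applies Fatou to the full sequence directly; this changes nothing, so the two proofs are essentially the same.
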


\begin{proof} 
We use the same notation as in the proof of Proposition \ref{prop:compactness}. Combining the Modica-Mortola trick with the coarea formula,
\begin{equation*}
\begin{split}
      I_\eps(u_\eps,v_\eps) &=  \int_{\R^n} \left(
	\frac{W(v_\eps)}\eps+ \frac\eps{ 2}|\nabla v_\eps|^2
	\right)
	\left(
	\frac{W(u_\eps)}\eps+ \frac\eps{ 2}|\nabla u_\eps|^2
	\right)\dx \\
 &\geq \int_{\R^n}|\nabla U_\e|\left(
	\frac{W(v_\eps)}\eps+ \frac\eps{ 2}|\nabla v_\eps|^2
	\right)\dx\\
&\geq \int_{0}^{\sigma}\int_{\partial_*E_\e^t}\left(\frac{W(v_\eps)}\eps+ \frac\eps{ 2}|\nabla v_\eps|^2
	\right)\d\H^{n-1}\,.
\end{split}  
\end{equation*}
By Fatou's lemma, 
\[
    \liminf_{\e\to 0}\int_{0}^{\sigma}\int_{\partial_*E_\e^t}\left(\frac{W(v_\eps)}\eps+ \frac\eps{ 2}|\nabla v_\eps|^2
    \right)\d\H^{n-1}\ge\int_{0}^{\sigma}  \liminf_{\e\to 0}\int_{\partial_*E_\e^t}\left(\frac{W(v_\eps)}\eps+ \frac\eps{ 2}|\nabla v_\eps|^2
    \right)\d\H^{n-1}\,.
\]

The assumed convergence implies in particular that  
\[
    \chi_{E_\e^t} \to  \chi_{\{u=1\}} \quad\text{ strictly in } BV_\loc(\R^n)
\]
for  every $t\in (0,\sigma)\setminus T$, where $T$ is a null set. For such a $t$, if \[\liminf_{\eps\to 0} \int_{\partial_* E_\eps^{t}} \left(
        \frac{W(v_\eps)}\eps+ \frac\eps{ 2}|\nabla v_\eps|^2
        \right) \dHn<+\infty \]
        we obtain by Theorem \ref{thm1OR}
        the existence of $v^t\in BV(S;\{-1,1\})$ such that
    \begin{equation*}
         \liminf_{\eps\to 0} \int_{\partial_* E_\eps^{t}} \left(
        \frac{W(v_\eps)}\eps+ \frac\eps{ 2}|\nabla v_\eps|^2
        \right) \dHn\geq \sigma \H^{n-2}(J_{v^t})\,,
    \end{equation*}
and
  \begin{equation*}
        \int_{\partial_* E_\eps^t} \varphi(x,v_\eps)\dHn\to \int_{J_u} \varphi(x,v^t)\dHn \text{ for every }\varphi\in C^0_c(\R^n\times\R)\,.
    \end{equation*}  
By assumption \eqref{lower_bound_slice}, for a.e. $t\in [0,\sigma]$ we have $v=v^t$.
Moreover, if \[\liminf_{\eps\to 0} \int_{\partial_* E_\eps^{t}} \left(
        \frac{W(v_\eps)}\eps+ \frac\eps{ 2}|\nabla v_\eps|^2
        \right) \dHn=+\infty\,, \]
 then $\sigma \H^{n-2}(J_v)\le +\infty$.
 
 Hence,
\[
  \begin{split}
    \liminf_{\e\to 0}\int_{0}^{\sigma}\int_{\partial_*E_\e^t}\left(\frac{W(v_\eps)}\eps+ \frac\eps{ 2}|\nabla v_\eps|^2
    \right)\d\H^{n-1}
&\geq
    \int_{0}^{\sigma} \liminf_{\e\to 0} \int_{\partial_*E_\e^t}\left(\frac{W(v_\eps)}\eps+ \frac\eps{ 2}|\nabla v_\eps|^2
    \right)\d\H^{n-1}\\
    &\geq  \sigma^2 \H^{n-2}(J_v)\,,
  \end{split}
\]
which proves the claim.
\end{proof}

\section{Second main result}
\label{sec:proofthm2}
In this section we state and prove our second main result. 
We stress that from now on we restrict the analysis to functionals corresponding to the specific choice $n=3$ and
\begin{equation}\label{W}
 W(t)=   \bar W(t):=(1-t^2)^2\,.
\end{equation}
As in the previous section  we use the  notation 
\[
  \begin{split}
    \phi(s)&=\int_{-1}^s \sqrt{2\bar W(t)}\d t\\
\sigma&=\phi(1)\\
    U_\e&=\phi\circ u_\e\,.
  \end{split}
\]
We introduce the function $a^{\bar\omega}\colon\R\to\R$ given by
\begin{equation}\label{def:a}
	a^{\bar\omega}(t)\defas \bar\om(t)a_1+ (1-\bar\omega(t))a_2 \,,
\end{equation}
with $a_1,a_2>0$, $\bar\om\in C^\infty_c(\R)$, $0\le \bar\omega\le1$, $\bar\omega({ -1})=0$ and $\bar\omega(1)=1$. For later convenience, we observe that 
\begin{equation}\label{bound_a}
\min\{a_1,a_2\}\le 	a^{\bar\omega}(t)\le \max\{a_1,a_2\}\quad \forall t\in \R\,.
\end{equation}
For $\eps>0$ we consider the family of functionals $J_\eps\colon(L^1(\R^3))^2\to[0,+\infty]$ given by
\begin{equation}\label{def:J_e}
J_\eps(u,v):= \int_{\R^3} \frac{1}{\eps
}a^{\bar\omega}(v)\left(\frac{\bar W'(u)}{\eps}-\eps\Delta u\right)^2\dx\,,
\end{equation}
if $(u,v)\in W^{2,2}_\loc(\R^3)\times L^1_{\loc}(\R^3)$,  and $+\infty$ otherwise.
Our second main result is the following.

     \begin{theorem}[Lower bound and compactness of $I_\eps+J_\eps$]\label{thm2} Let $n=3$ and $\bar W$ be as in \eqref{W}. 
  	Let $I_\eps, J_\eps, M_\eps$ be defined as in \eqref{def:I_e}, \eqref{def:J_e} and \eqref{def:M_e} respectively, where we assume $W(t)\equiv \bar W(t)$. Then the following hold:
  	\begin{enumerate}[label=$(\arabic*)$]
  		\item Compactness.	Let $((u_\eps,v_\eps))_\e\subset (L^1_\loc(\R^3))^2$ be a sequence such that  
  		
  		$$\sup( I_\eps(u_\eps,v_\eps)+M_\eps(u_\eps))<+\infty\,,$$ 
  		\begin{equation}\label{assumption_J}
  			J_\eps(u_\eps,v_\eps)<{\sigma}(8\pi\min\{a_1,a_2\}-\delta)\,,
  		\end{equation}
  		for some $\delta>0$. Then there exists  $(u,v)\in BV_\loc(\R^3;\{{ -1},1\})\times BV(S;\{{ -1},1\})$, with $S=\llbracket J_u,\star\nu_u,1\rrbracket$, such that, up to subsequence,  
\begin{equation}\label{eq:12}
  \begin{split}
u_\e&\to u\quad\text{ strictly in }BV\\
    \left(\left(\frac{\e}{2}|\nabla u_\e|^2+\e^{-1}\bar W(u_\e)\right)\L^3,v_\e\right)&\to (\sigma \H^{2}\res J_u, v)\quad\text{ in }L^q
  \end{split}
\end{equation}
for every $q\in [1,4)$ as measure-function pairs.
  		\item Lower bound. Let $((u_\eps,v_\eps))_\e\subset (L^1_\loc(\R^3))^2$ be a sequence and  $(u,v)\in BV_\loc(\R^3;\{{ -1},1\})\times BV(S;\{{ -1},1\})$ with $S=\llbracket J_u,*\nu_u,1\rrbracket$, such that \eqref{eq:12} holds. Then 
  		\begin{equation}\label{liminf-inequality-22}
  			\liminf_{\eps\searrow0}(I_\eps(u_\eps,v_\eps)+ J_\eps(u_\eps,v_\eps))\ge I(u,v)+ J(u,v)\,,
  		\end{equation}
  		where $I$ is as in \eqref{def:I} and $J\colon (L^1_\loc(\R^3))^2\to[0,+\infty]$ is defined as
  		\begin{equation}\label{def:J}
  			J(u,v)\defas \sigma \int_{J_u}\left(a_1\frac{1+v}{2}+a_2\frac{1-v}{2}\right)|H_{J_u}|^2\d\mathcal{H}^{2}\,,
  		\end{equation}
  		if $(u,v)\in BV_\loc(\R^3;\{{ -1},1\})\times BV(S;\{{ -1},1\})$, and is equal to $+\infty$ otherwise.
  	\end{enumerate}
  \end{theorem}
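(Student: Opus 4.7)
\medskip
\noindent\textit{Proof plan.} The argument will rest on the two pillars flagged in the introduction: the discrepancy estimate $|\xi_\e|(\R^3)\to 0$ from \cite{RS}, which allows the slicing measure $|\nabla U_\e|\L^3$ to be replaced by the full Modica-Mortola measure $\mu_\e$ in every limit, and the Li-Yau inequality, which under the strict inequality \eqref{assumption_J} forbids the emergence of higher-multiplicity pieces in the limit varifold. The first part of the proof will reduce compactness to Theorem \ref{thm1} via slicing, while the lower bound will couple the R\"oger-Sch\"atzle lower-semicontinuity theorem for the diffuse Willmore energy with the measure-function pair convergence of $v_\e$.

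\medskip
\noindent\textbf{Compactness.} I would first observe that \eqref{bound_a} gives $a^{\bar\omega}(v_\e)\ge \min\{a_1,a_2\}$, so the bound on $J_\e(u_\e,v_\e)$ translates into a uniform bound on the scalar diffuse Willmore energy $\mathcal F_\e(u_\e)$. Combining this with the bound on $M_\e(u_\e)$ and the Li-Yau-type inequality \eqref{assumption_J}, the analysis of \cite{RS} produces a set $E\subset\R^3$ of finite perimeter and a subsequence along which $(1+u_\e)/2\to\chi_E$ strictly in $BV_\loc(\R^3)$. The bound on $I_\e(u_\e,v_\e)$ then permits Theorem \ref{thm1} to extract a further subsequence and a $v\in BV(S;\{-1,1\})$ with $(|\nabla U_\e|\L^3,v_\e)\to(\sigma\H^2\res J_u,v)$ as measure-function pairs in $L^q$ for every $q\in[1,4)$. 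Since the Modica-Mortola identity yields $\mu_\e-|\nabla U_\e|\L^3\le |\xi_\e|$ and $|\xi_\e|(\R^3)\to 0$ by \cite{RS}, the measures $\mu_\e$ and $|\nabla U_\e|\L^3$ share the same weak-$*$ limit $\sigma\H^2\res J_u$, upgrading the convergence to the form \eqref{eq:12}.

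\medskip
\noindent\textbf{Lower bound.} The $I$-part is immediate from Theorem \ref{thm1}. For the $J$-part, I would push $\mu_\e$ forward under the graph map $x\mapsto(x,v_\e(x))$ and combine this with the oriented curvature varifolds naturally attached to $u_\e$ in \cite{RS}. The strong convergence of the triple $(\mu_\e,u_\e,v_\e)$ supplied by Lemma \ref{lem:strong_uv_convergence}, which hinges crucially on $|\xi_\e|\to 0$, allows the continuous weight $a^{\bar\omega}(v_\e)$ to be carried through the scalar lower-semicontinuity result for the diffuse Willmore energy. Since $v\in\{-1,1\}$ $\H^2$-almost everywhere on $J_u$ and $a^{\bar\omega}(1)=a_1$, $a^{\bar\omega}(-1)=a_2$, this delivers
$\liminf_\e J_\e(u_\e,v_\e)\ge \sigma\int_{J_u}\bigl(a_1\tfrac{1+v}{2}+a_2\tfrac{1-v}{2}\bigr)|H_{J_u}|^2\d\H^2 = J(u,v)$.

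\medskip
\noindent\textbf{Main obstacle.} The genuinely delicate point will be the joint passage to the limit in $J_\e$, where a Willmore-type integrand is multiplied by the non-constant weight $a^{\bar\omega}(v_\e)$. Scalar lower semicontinuity as in \cite{RS} does not accommodate such a weight for free, and obtaining the \emph{sharp} phase-dependent integrand (rather than a crude bound with $\min\{a_1,a_2\}$) requires $v_\e$ to converge strongly as a measure-function pair against the \emph{curvature} measure associated to $u_\e$, not merely against $\mu_\e$. Establishing this coupling is the purpose of Lemma \ref{lem:strong_uv_convergence}, and its proof presumably combines oriented-varifold compactness with the discrepancy estimate to control the graph push-forwards and identify the limiting integrand phase-by-phase.
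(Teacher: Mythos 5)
Your proposal is correct and matches the paper's strategy in all essentials: Li--Yau under \eqref{assumption_J} to force density one and hence strict $BV$ convergence of $u_\e$ via Theorem \ref{thm:lowerbd_integrality}, Theorem \ref{thm1} for the measure-function pair convergence of $(|\nabla U_\e|\L^3, v_\e)$, the discrepancy estimate from \cite{RS} to upgrade $|\nabla U_\e|\L^3$ to $\mu_\e$, and then for the $J$-part a coupling of a weak measure-function pair limit for the rescaled curvature density with a strong one for $v_\e$. You also correctly single out the coupling of $v_\e$ to the curvature as the genuine crux. The one place where your plan is imprecise (and where the paper is more careful) is the choice of reference measure in that coupling: rather than $\mu_\e$, the paper writes the Willmore density as a quotient scaled against $\e|\nabla u_\e|^2\L^3$, establishes \emph{strong} $L^q$ measure-function pair convergence of $\sqrt{a^{\bar\omega}(v_\e)}$ with respect to this reference measure (Lemma \ref{lem:sqrt_strong_mfp}, via a co-area auxiliary measure $\zeta_\e$ on $\R^4$ built from the level sets of $u_\e$, see Lemmata \ref{ConvAddVar} and \ref{lem:strong_uv_convergence}), combines it with the \emph{weak} convergence of the rescaled curvature density by Moser's product rule \cite[Proposition 3.2]{Mos01}, and closes with convex lower semicontinuity in $L^2$ \cite[Theorem 4.4.2(ii)]{Hut}. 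Your description of a push-forward under $x\mapsto(x,v_\e(x))$ is a reasonable heuristic for the strong measure-function pair convergence, but the technical vehicle in the paper is really the $u_\e$-level-set slicing together with the vanishing discrepancy measure, which lets one trade $\e|\nabla u_\e|^2$ for $\sqrt{2\bar W(u_\e)}|\nabla u_\e|$ inside the limit.
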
   
%
\subsection{Proof of Theorem \ref{thm2}}
%
In the proof of our second theorem, we are going to rely heavily on the analysis from \cite{RS}. In the following, we summarize the results from that reference that will be useful for our purpose. 
 \begin{notation}
For $u_\e\in W^{1,2}_{\mathrm{loc}}(\R^n)$ let 
\begin{equation}\label{mu}
     \begin{split}
    \mu_\e&\defas\left(\frac{\e}{2} |\nabla u_\e|^2+\e^{-1}\bar W(u_\e)\right)\L^n\\
    \xi_\e&\defas\left(\frac{\e}{2} |\nabla u_\e|^2-\e^{-1}\bar W(u_\e)\right)\L^n
  \end{split}
\end{equation}
We omit the dependence of $\mu_\e,\xi_\e$ from $u_\e$ from the notation. No confusion will arise from this. 
 \end{notation}

In all of the remaining statements of the current subsection, it will be assumed 
\[
 \begin{split}
   \mu_\e&\wsto  \mu\quad\text{  in }\mathcal M(\R^3)\,,\\
 \sup_{\e>0}\int_{\R^3}\frac1\e\left(\e^{-1}\bar W'(u_\e)-\e\Delta u_\e\right)^2\d x&<+\infty\,.  
 \end{split}
\]
\begin{proposition}[{\cite[Proposition 4.3]{tonegawa2002phase} and \cite[Proposition 4.9]{RS}}]
\label{prop:discrepancy}
Under the above assumptions, $|\xi_\e|\wsto 0$ in $\mathcal M(\R^3)$.
\end{proposition}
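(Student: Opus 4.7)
The plan is to exploit the stress-energy tensor associated to the Allen-Cahn structure,
$$T^\e_{ij} := \e\, \partial_i u_\e\, \partial_j u_\e - \left(\tfrac{\e}{2}|\nabla u_\e|^2 + \e^{-1}\bar W(u_\e)\right)\delta_{ij},$$
which satisfies the divergence identity $\partial_j T^\e_{ij} = -f_\e\, \partial_i u_\e$ with $f_\e := \e^{-1}\bar W'(u_\e) - \e\Delta u_\e$. By Cauchy--Schwarz, $\int |f_\e\,\partial_i u_\e|\dx$ is controlled by $\|f_\e/\sqrt{\e}\|_{L^2} \cdot \|\sqrt{\e}\nabla u_\e\|_{L^2}$, which is uniformly bounded under the hypotheses since $\|\sqrt{\e}\nabla u_\e\|_{L^2}^2 \le 2\mu_\e(\R^3)\le C$. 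Thus the divergence of $T^\e$ is bounded in the total variation norm uniformly in $\e$, and $T^\e$ passes to a limit as a matrix-valued distribution.

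The first step is a variant of Modica's one-sided gradient estimate: multiplying the Allen-Cahn equation $-\e\Delta u_\e + \e^{-1}\bar W'(u_\e) = f_\e$ by $\sqrt{2\bar W(u_\e)}$ (or equivalently testing against $\nabla(\phi\circ u_\e)$) and exploiting the pointwise inequality $(\sqrt{\e/2}|\nabla u_\e|-\sqrt{\bar W(u_\e)/\e})^2 \ge 0$, one obtains $\xi_\e^+ \wsto 0$ in $\M(\R^3)$, with the usual Modica argument picking up a controllable error from the $L^2$ norm of $f_\e/\sqrt{\e}$.

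For the reverse direction $\xi_\e^- \wsto 0$, I would proceed by a blow-up argument at $\mu$-a.e.\ point of $\mathrm{supp}\,\mu$. The strategy is to combine the divergence identity for $T^\e$ with a monotonicity-type formula to show that blow-up limits along suitable rescalings are stationary solutions of the unperturbed Allen-Cahn equation $-\Delta u + \bar W'(u) = 0$, with integer-multiplicity planar interfaces. For such one-dimensional optimal profiles the equipartition $\tfrac12|u'|^2 = \bar W(u)$ holds pointwise; integrating this over the blow-up and using $\mu_\e \wsto \mu$ together with the absolute continuity of $\xi_\e$ with respect to $\mu_\e$ upgrades $\xi_\e^+ \wsto 0$ to $|\xi_\e|\wsto 0$.

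The main obstacle is the second ingredient: establishing enough regularity of the limit $\mu$ (rectifiability, integer density) and enough control on the blow-up profile to identify it with the one-dimensional optimal profile. This essentially amounts to reproducing the varifold-convergence machinery of Tonegawa and R\"oger--Schätzle, in particular an Allard-type density lower bound and a monotonicity inequality derived from the divergence identity above under the $L^2$ bound on $f_\e/\sqrt{\e}$.
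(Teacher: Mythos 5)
The paper does not offer a proof of this proposition: it is invoked directly from \cite[Proposition 4.3]{tonegawa2002phase} and \cite[Proposition 4.9]{RS}, so there is no in-paper argument to compare your proposal against. Judged on its own terms, your sketch correctly identifies the architecture of the cited proofs: the Allen--Cahn stress-energy tensor with divergence identity $\partial_j T^\e_{ij}=-f_\e\,\partial_i u_\e$, the $L^2$ bound on $f_\e/\sqrt\e$ entering exactly through the Cauchy--Schwarz estimate you write down, a monotonicity/Modica-type step controlling $\xi_\e^+$, and a blow-up plus integrality plus equipartition argument to handle the full $|\xi_\e|$.

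However, what you have written is a roadmap, not a proof, and you say so yourself. The monotonicity formula extracted from the stress-energy identity, the Allard-type density lower bound, the rectifiability and (after normalizing by $\sigma$) integrality of the limit varifold, and the identification of blow-up limits with the one-dimensional heteroclinic are not supplementary details---they carry the entire argument. In particular, $\xi_\e^-\wsto 0$ cannot be extracted from the divergence identity and the $L^2$ bound alone: without the integer-density-one structure of the limit there is no equipartition to invoke, and without the monotonicity formula there are no well-behaved blow-up limits in the first place. This is exactly the machinery of Tonegawa and R\"oger--Sch\"atzle; a self-contained proof would need to reproduce it, which is why the authors sensibly cite rather than reprove.
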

In the statement of the following proposition  we identify $G^o(3,2)$ with the set of simple unit elements of $\Lambda_{2}(\R^3)$. 
\begin{proposition}[{\cite[Proposition 4.10]{RS}}]
\label{prop:1stvariationVe}
Let $V_\e\in \mathcal M_+(\R^3\times G^o(3,2))$ be the oriented varifold defined by 
\[
V_\e(\varphi)=\int_{\R^3}\varphi(x,\star \nabla u_\e(x)/|\nabla u_\e(x)|)\d\mu_\e\,.
\]
Then  the first variation of $V_\e$ is given by 
\[
  \begin{split}
    (\delta V_\e)(\varphi)&=-\int (-\e\Delta u_\e+\e^{-1}\bar W'(u_\e))\nabla u_\e(x)\cdot\varphi\d x\\
   &\quad +\int \nabla \varphi: \frac{\nabla u_\e}{|\nabla u_\e|}\otimes \frac{\nabla u_\e}{|\nabla u_\e|} \d\xi_\e \quad \text{ for }\varphi\in C^1_c(\R^3;\R^3)\,.
  \end{split}
\]
\end{proposition}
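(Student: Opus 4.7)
The claim is a distributional identity, and my plan is a direct computation: exhibit the Modica--Mortola stress-energy tensor inside the integrand and then integrate by parts once. To begin, I would identify the tangent plane. Under the Hodge-star identification of $G^o(3,2)$ with simple unit $2$-vectors in $\Lambda_2(\R^3)$, the oriented $2$-plane $\star\nu_\e$ with $\nu_\e := \nabla u_\e/|\nabla u_\e|$ has orthogonal projection $P_T = I - \nu_\e\otimes\nu_\e$ in matrix form (well defined $\L^3$-a.e.\ on $\{\nabla u_\e\neq 0\}$, extended by zero elsewhere). Plugging into the definition of $\delta V_\e$, I would obtain
\[
\delta V_\e(X) \;=\; -\int_{\R^3}\nabla X : (I - \nu_\e\otimes\nu_\e)\,e_\e\,\d x,
\]
where $e_\e := \tfrac{\e}{2}|\nabla u_\e|^2 + \e^{-1}\bar W(u_\e)$ is the density of $\mu_\e$.

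The key step is then the pointwise algebraic identity
\[
e_\e(I - \nu_\e\otimes\nu_\e) \;=\; T^{MM} + d_\e\,\nu_\e\otimes\nu_\e, \qquad T^{MM}_{ij} := e_\e\,\delta_{ij} - \e\,\partial_i u_\e\,\partial_j u_\e,
\]
where $d_\e$ denotes the density of $\xi_\e$; this follows immediately from $e_\e + d_\e = \e|\nabla u_\e|^2$ combined with $|\nabla u_\e|^2\,\nu_\e\otimes\nu_\e = \nabla u_\e\otimes\nabla u_\e$. Substituting it into the previous display directly produces the discrepancy contribution $\int\nabla X : \nu_\e\otimes\nu_\e\,\d\xi_\e$ of the statement (up to the sign convention used for $\delta V$).

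For the $T^{MM}$-part I would integrate by parts. A short calculation using $\partial_j e_\e = \e\,\partial_i u_\e\,\partial_i\partial_j u_\e + \e^{-1}\bar W'(u_\e)\,\partial_j u_\e$ and the product rule on $\partial_i(\partial_i u_\e\,\partial_j u_\e)$ produces a cancellation of the two cubic terms $\e\,\partial_i u_\e\,\partial_i\partial_j u_\e$, leaving the divergence formula
\[
\partial_i T^{MM}_{ij} \;=\; \big(\e^{-1}\bar W'(u_\e) - \e\Delta u_\e\big)\,\partial_j u_\e.
\]
Pairing with $X \in C^1_c(\R^3;\R^3)$ and integrating by parts then yields the Allen--Cahn term $\int(-\e\Delta u_\e + \e^{-1}\bar W'(u_\e))\nabla u_\e\cdot X\,\d x$ of the statement.

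The only points requiring care -- essentially bookkeeping rather than real difficulties -- are (i) the set $\{\nabla u_\e = 0\}$ on which $\nu_\e$ is undefined: any extension is consistent, since $|\nabla u_\e|^2\,\nu_\e\otimes\nu_\e = \nabla u_\e\otimes\nabla u_\e$ continues to hold and the $\nu_\e\otimes\nu_\e\,\d\xi_\e$ integrand is bounded by $1$; and (ii) the regularity needed for the integration by parts, which follows from the assumed finiteness of $\int\e^{-1}(-\e\Delta u_\e + \e^{-1}\bar W'(u_\e))^2\,\d x$ together with elliptic regularity, giving $u_\e \in W^{2,2}_\loc(\R^3)$.
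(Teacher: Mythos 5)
Your computation is correct, and it is the same argument as in the cited source \cite[Proposition 4.10]{RS} (the present paper offers no independent proof, only the citation): write the integrand of $\delta V_\e$ via the Modica--Mortola stress tensor $T^{MM}_{ij}=e_\e\delta_{ij}-\e\,\partial_i u_\e\,\partial_j u_\e$ plus the discrepancy, use $\operatorname{div}T^{MM}=(\e^{-1}\bar W'(u_\e)-\e\Delta u_\e)\nabla u_\e$, and integrate by parts. The algebraic identity $e_\e(I-\nu_\e\otimes\nu_\e)=T^{MM}+d_\e\,\nu_\e\otimes\nu_\e$, the observation that it holds for any extension of $\nu_\e$ on $\{\nabla u_\e=0\}$, and the $W^{2,2}_{\loc}$ regularity needed to differentiate $T^{MM}$ all check out.

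One clarification you should make explicit rather than hide in a parenthetical. With the definition actually given in Section 2.4 of the paper, namely $\delta V(X)=-\int\nabla X:P_T\,\d V$, your argument produces
\begin{equation*}
\delta V_\e(X)\;=\;\int\big(\e^{-1}\bar W'(u_\e)-\e\Delta u_\e\big)\nabla u_\e\cdot X\,\dx\;-\;\int\nabla X:\nu_\e\otimes\nu_\e\,\d\xi_\e\,,
\end{equation*}
which is the negative of the displayed formula in the proposition, with \emph{both} terms flipped, not only the discrepancy term (you describe the Allen--Cahn term as appearing with a plus sign, which does not match the statement's leading minus). The formula as printed agrees with the opposite convention $\delta V(X)=+\int\nabla X:P_T\,\d V$, which is the one used in \cite{RS}; so the mismatch is an inconsistency already present between Section 2.4 and the proposition as copied from the reference. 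It is harmless downstream, since in the proof of Proposition \ref{prop:lower-bound2} only $|H_V|^2$ enters, but your review should name the sign flip and attribute it to the competing conventions rather than leave it as a vague caveat.
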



\begin{theorem}{\cite[Theorems 4.1 and 5.1]{RS}}
\label{thm:lowerbd_integrality}
There exists a rectifiable  $2$-varifold  $V$ with the following properties:
\begin{itemize}
\item[(i)] There exists a subsequence (no relabeling) such that $V_\e\wsto V$ in  $\mathcal M(\R^n\times G^o(3,2))$. In particular $\|V\|=\mu$.
\item[(ii)] The varifold $V$ has generalized  mean curvature $H_V\in L^2_{\|V\|,\loc}(\R^3;\R^3)$ satisfying
\[
\int |H_V|^2\d\|V\|\leq \liminf_{\e\to 0}\e^{-1}\int_{\R^n}(-\e\Delta u_\e+\e^{-1}\bar W(u_\e))^2\d x\,.
\]
\item[(iii)] The varifold $ \sigma^{-1}V$ is integral.
\end{itemize}

\end{theorem}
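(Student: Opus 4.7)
The plan is to establish the three properties in order, with Propositions~\ref{prop:discrepancy} and~\ref{prop:1stvariationVe} providing the main analytic input.

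For~$(i)$, the total mass $\|V_\e\|(\R^3\times G^o(3,2))=\mu_\e(\R^3)$ is uniformly bounded by hypothesis, so Banach--Alaoglu on Radon measures yields a subsequence (not relabeled) with $V_\e\wsto V$ in $\M(\R^3\times G^o(3,2))$. Projection onto the first factor is weak-$*$ continuous, and the first marginal of $V_\e$ is precisely $\mu_\e\wsto\mu$; hence the first marginal of $V$ is $\mu$, that is, $\|V\|=\mu$.

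For~$(ii)$, I would start from the expression for $\delta V_\e$ given by Proposition~\ref{prop:1stvariationVe}. The second term $\int\nabla\varphi:(\nu_\e\otimes\nu_\e)\,d\xi_\e$, with $\nu_\e=\nabla u_\e/|\nabla u_\e|$, is bounded in absolute value by $\|\nabla\varphi\|_\infty\,|\xi_\e|(\R^3)$ and therefore vanishes as $\e\to 0$ by Proposition~\ref{prop:discrepancy}. For the first term, Cauchy--Schwarz gives
\[
\left|\int(-\e\Delta u_\e+\e^{-1}\bar W'(u_\e))\nabla u_\e\cdot\varphi\,dx\right|
\le \Bigl(\int\e^{-1}(-\e\Delta u_\e+\e^{-1}\bar W'(u_\e))^2\,dx\Bigr)^{1/2}\Bigl(\int\e|\nabla u_\e|^2|\varphi|^2\,dx\Bigr)^{1/2}.
\]
Writing $\e|\nabla u_\e|^2\,\L^3=\mu_\e+\xi_\e$, and using $\mu_\e\wsto\mu=\|V\|$ together with $|\xi_\e|\to 0$, the second factor tends to $\|\varphi\|_{L^2(\|V\|)}$. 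Passing to the limit, $\varphi\mapsto(\delta V)(\varphi)$ extends to a continuous linear functional on $L^2(\|V\|;\R^3)$, and the Riesz representation theorem produces $H_V\in L^2_{\|V\|,\loc}(\R^3;\R^3)$ with $\delta V=H_V\,\|V\|$. The liminf bound on $\|H_V\|_{L^2(\|V\|)}^2$ follows by applying the above estimate to a sequence of smooth test fields approximating $H_V/\|H_V\|_{L^2(\|V\|)}$ in $L^2(\|V\|)$, then squaring.

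For~$(iii)$, rectifiability is a consequence of Allard's rectifiability theorem applied to $V$: its first variation is a Radon measure with $L^2$-density, and it suffices to supply a $\|V\|$-a.e.\ positive lower bound on the lower $2$-density $\theta^2_*(\|V\|,\cdot)$; this comes from Sch\"atzle's monotonicity formula in codimension one for varifolds with $L^2$ mean curvature, together with a standard clearing-out argument. The integrality $\sigma^{-1}V\in\sfIV_2^o(\R^3)$ is the principal obstacle, and I would handle it by blow-up at a $\|V\|$-a.e.\ $x_0$: set $\tilde u_j(x):=u_{\e_j}(x_0+r_jx)$ along a diagonal sequence with $\tilde\e_j:=\e_j/r_j\to 0$. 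A direct calculation shows that both the Modica--Mortola energy $\mu_\e$ (rescaled by $r^{-2}$) and the Willmore integral $\int\e^{-1}(-\e\Delta u+\e^{-1}\bar W'(u))^2\,dx$ are invariant under this scaling in $\R^3$, so the rescaled sequence satisfies the same hypotheses and its discrepancy still tends to zero. Equipartition in the limit, combined with the surviving Willmore bound, forces the tangent varifold of $V$ at $x_0$ to be supported on a plane and the blow-up limit of $\tilde u_j$ to be a finite superposition of parallel optimal $1$D heteroclinic profiles $\pm\tanh(\cdot/\sqrt 2)$, each contributing the fixed quantum $\sigma=\int_{-1}^1\sqrt{2\bar W}$ to the planar density. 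This yields $\theta^2(\|V\|,x_0)/\sigma\in\N$ and hence integrality. The delicate step is identifying the blow-up of $\tilde u_j$ as such a $1$D superposition: a priori the tangent plane of $V$ need not be the gradient direction of $u_\e$ after rescaling, so one has to coordinate the varifold blow-up with the phase-field blow-up using a Modica-type pointwise gradient inequality and strong compactness for sequences with controlled Willmore energy and vanishing discrepancy.
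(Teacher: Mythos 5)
First, note that the paper does not prove this statement at all: Theorem \ref{thm:lowerbd_integrality} is quoted verbatim from R\"oger--Sch\"atzle \cite{RS} (their Theorems 4.1 and 5.1), so you are attempting to reprove a cited external result rather than paralleling an argument in the text. Your treatment of (i) and (ii) is essentially sound and is the standard route: mass bounds plus Banach--Alaoglu give $V_\e\wsto V$ with first marginal $\mu$, and combining the first-variation formula of Proposition \ref{prop:1stvariationVe} with the vanishing discrepancy of Proposition \ref{prop:discrepancy} and Cauchy--Schwarz shows $|\delta V(\varphi)|\le \Lambda^{1/2}\|\varphi\|_{L^2_{\|V\|}}$, whence Riesz representation yields $H_V\in L^2_{\|V\|}$ together with the liminf bound.

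The genuine gap is in (iii). At $\|V\|$-a.e.\ $x_0$ one has $\int_{B_r(x_0)}|H_V|^2\d\|V\|\to0$ as $r\to0$, so after your rescaling the Willmore contribution localized near $x_0$ disappears in the limit; the blow-up only tells you that the tangent object is a stationary rectifiable varifold with planar support, and the claim that its density must be an integer multiple of $\sigma$ (``a finite superposition of parallel optimal $1$D profiles, each contributing the quantum $\sigma$'') is precisely the statement to be proved, not a consequence of ``equipartition plus the surviving Willmore bound''. The danger is exactly non-integer densities produced by interfaces whose energy partially cancels or folds at scale $\e$, and excluding this is the core of the Hutchinson--Tonegawa/R\"oger--Sch\"atzle analysis: a multi-sheet decomposition of the level sets of $u_\e$ in small balls, local energy quantization with quantitative control of the discrepancy, and a clearing-out/lower density bound carried out at the $\e$-level (which you also need before Allard's rectifiability theorem can be invoked; it does not follow from properties of the limit $V$ alone). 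You explicitly flag this identification step as ``delicate'' and leave it open, so the proposal is a plausible plan but not a proof of (iii); within this paper the correct and intended move is simply to cite \cite{RS}, as the authors do.
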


As in \cite{OR}, we are going to use the  Li-Yau inequality \cite{li1982new,KuSc04} to obtain the crucial property that the limit surface is of density one. In order to state the Li-Yau inequality, suppose that $\tilde V\in \sfRV_2^o(\R^3)$. The two-dimensional density of $\|\tilde V\|$ at $x\in \R^3$ is defined by 
\begin{equation*}
    \theta_2(x,\|\tilde V\|)=\lim_{r\to 0} \dfrac{\|\tilde V\|(B_r(x))}{\omega(2,r)}\,,
\end{equation*}
with $\omega(2,r)$ the volume of the  ball of radius $r$ in $\R^2$.
This limit exists $\|\tilde V\|$-almost everywhere. The Li-Yau inequality states that if $\tilde V$ possesses a mean curvature vector $H_{\tilde V}\in L^2_{\|\tilde V\|}(\R^3)$ then 
\begin{equation*}
 \theta_2\leq \frac1{4\pi}\int |H_{\tilde V}|^2\d\|\tilde V\|\,.
\end{equation*}
In particular, if $\tilde V\in \sfIV_2^o(\R^3)$, the inequality implies 
  \begin{equation}\label{eq:7}
    \int |H_{\tilde V}|^2\d\|\tilde V\|<8\pi\quad \Rightarrow \quad \theta_2=1 \quad \|\tilde V\|-\text{almost everywhere.}
  \end{equation}

 We divide the proof of Theorem \ref{thm2} into several steps. 
 The next proposition is the \emph{compactness} part of that Theorem.

\begin{proposition}[Compactness]\label{prop:compactness2}  Let $n=3$. 
Let $I_\eps, J_\eps, M_\eps$ be defined as in \eqref{def:I_e}, \eqref{def:J_e} and \eqref{def:M_e} respectively. 
  		Let $((u_\eps,v_\eps))_\e\subset (L^1_{\loc}(\R^3))^2$ be a sequence such that  
  		
  		$$\sup( I_\eps(u_\eps,v_\eps)+M_\eps(u_\eps))<+\infty\,,$$ 
  		\begin{equation}\label{assumption_J_2}
  			J_\eps(u_\eps,v_\eps)<\sigma(8\pi\min\{a_1,a_2\}-\delta)\,,
  		\end{equation}
  		for some $\delta>0$. Then there exists  $(u,v)\in BV(\R^3;\{-1,1\})\times BV(S;\{-1,1\})$, with $S=\llbracket J_u,*\nu_u,1\rrbracket$, such that, up to a subsequence,  
    \begin{equation*}
        u_\eps\to u \quad \text{ strictly in } BV_{\rm loc}(\R^3)\,,
    \end{equation*}
    \begin{equation}\label{eq:82}
    \left(\left(\frac{\e}{2}|\nabla u_\e|^2+\e^{-1}W(u_\e)\right)\L^3,v_\e\right)\to (\sigma \H^{2}\res J_u, v)\quad\text{ in }L^q
  \end{equation}
   as measure-function pairs for every $q\in [1,4)$.
   Additionally we have the measure-function pair convergence 
   \begin{equation}\label{eq:mfpnew}
    \left(|\nabla (\phi\circ u_\e)|\L^3,v_\e\right)\to (\sigma \H^{2}\res J_u, v)\quad\text{ in }L^q\,,
  \end{equation}
  again for $q\in [1,4)$.
\end{proposition}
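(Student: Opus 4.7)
The plan is to reduce the theorem to Proposition~\ref{prop:compactness} by first establishing strict $BV$ convergence of $u_\eps$ via the Röger-Schätzle theory combined with the Li-Yau inequality, then applying the first compactness result to obtain measure-function pair convergence for $(|\nabla U_\eps|\L^3, v_\eps)$, and finally transferring this to $(\mu_\eps, v_\eps)$ by controlling the discrepancy.

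First, since $a^{\bar\omega} \ge a_{\min} \defas \min\{a_1, a_2\}$ by \eqref{bound_a}, the hypothesis \eqref{assumption_J_2} gives the strict inequality
\[
F_\eps(u_\eps) \defas \int_{\R^3} \frac{1}{\eps}\Bigl(\frac{\bar W'(u_\eps)}{\eps} - \eps\,\Delta u_\eps\Bigr)^2 \dx \;\le\; a_{\min}^{-1} J_\eps(u_\eps, v_\eps) \;<\; \sigma\bigl(8\pi - \delta/a_{\min}\bigr).
\]
Together with $\sup_\eps M_\eps(u_\eps) < \infty$, this allows us to invoke Propositions~\ref{prop:discrepancy} and~\ref{prop:1stvariationVe} and Theorem~\ref{thm:lowerbd_integrality}: after passing to a subsequence, $|\xi_\eps| \wsto 0$, $\mu_\eps \wsto \mu$, and the oriented $2$-varifolds $V_\eps$ associated to $u_\eps$ converge to a rectifiable $V$ with $\|V\| = \mu$, such that $\sigma^{-1} V$ is integral and $V$ admits a generalized mean curvature $H_V$ with $\int |H_V|^2 \d\mu \le \liminf F_\eps$. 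Since the mean curvature is invariant under rescaling of multiplicity, $\int |H_{\sigma^{-1}V}|^2 \d\|\sigma^{-1}V\| = \sigma^{-1} \int |H_V|^2 \d\mu < 8\pi - \delta/(\sigma a_{\min})$. The Li-Yau inequality \eqref{eq:7} applied to the integral varifold $\sigma^{-1}V$ then forces its density to equal $1$ almost everywhere on its support; equivalently, $\mu = \sigma\,\H^2\res M$ for a rectifiable set $M$. Standard Modica-Mortola compactness produces $u = 2\chi_E - 1 \in BV_\loc(\R^3;\{-1,1\})$ with $u_\eps \to u$ in $L^1_\loc$, and the density-$1$ property identifies $M = J_u$ up to $\H^2$-null sets and, by the same mechanism as in \cite{RS}, ensures strict $BV$ convergence.

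With strict $BV$ convergence in hand, Proposition~\ref{prop:compactness} applies and yields, up to a further subsequence, $(|\nabla U_\eps|\L^3, v_\eps) \to (\sigma\,\H^2\res J_u, v)$ in $L^q$ as measure-function pairs for every $q \in [1,4)$, which is \eqref{eq:mfpnew}. To obtain \eqref{eq:82}, set $a_\eps \defas \sqrt{\eps/2}\,|\nabla u_\eps|$ and $b_\eps \defas \sqrt{\bar W(u_\eps)/\eps}$, so that the pointwise identities $\mu_\eps - |\nabla U_\eps|\L^3 = (a_\eps - b_\eps)^2 \L^3$ and $|\xi_\eps| = |a_\eps - b_\eps|(a_\eps + b_\eps)\L^3$ yield $0 \le \mu_\eps - |\nabla U_\eps|\L^3 \le |\xi_\eps|$ as measures. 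Strict $BV$ convergence forces $\mu_\eps(\R^3) \to \mu(\R^3)$ and $|\nabla U_\eps|\L^3(\R^3) \to \mu(\R^3)$, hence $(\mu_\eps - |\nabla U_\eps|\L^3)(\R^3) \to 0$. For $\varphi \in C_c^0(\R^3\times\R)$,
\[
\Bigl|\int \varphi(x, v_\eps) \d\mu_\eps - \int \varphi(x, v_\eps) \d(|\nabla U_\eps|\L^3)\Bigr| \le \|\varphi\|_{L^\infty} \bigl(\mu_\eps - |\nabla U_\eps|\L^3\bigr)(\R^3) \to 0,
\]
so the weak-$*$ convergence of the pushforwards passes from $|\nabla U_\eps|\L^3$ to $\mu_\eps$. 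The tightness condition of Definition~\ref{def:measure-function-pairs}(ii) follows from Hölder's inequality, the bound $\int |v_\eps|^4 \d\mu_\eps \le C$ (a consequence of $I_\eps \le \Lambda$ and the coercivity $\bar W(v) \ge c(|v|^4 - 1)$), and $\mu_\eps(\S_{\eps,j}) \to 0$ uniformly in $\eps$ (tightness of $(\mu_\eps)_\eps$ and Chebyshev's inequality).

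The main obstacle is the strict inequality in \eqref{assumption_J_2}: without it the Li-Yau inequality \eqref{eq:7} would only bound the density of $\sigma^{-1}V$ by $2$ rather than force it to equal $1$, and one could no longer rule out hidden multiplicity in $V$ nor deduce strict $BV$ convergence. Once the density-$1$ property is secured, the remaining steps (invocation of Proposition~\ref{prop:compactness} and transfer across the discrepancy) are essentially bookkeeping.
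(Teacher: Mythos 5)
Your proposal is correct and follows essentially the same route as the paper: use the strict bound in \eqref{assumption_J_2} together with $a^{\bar\omega}\ge\min\{a_1,a_2\}$ to bound the diffuse Willmore energy of $u_\e$, invoke the Röger--Schätzle varifold compactness (Theorem~\ref{thm:lowerbd_integrality}) and Li--Yau \eqref{eq:7} to force unit density, deduce strict $BV$ convergence, reduce to Proposition~\ref{prop:compactness} for \eqref{eq:mfpnew}, and finally transfer to $\mu_\e$ via the pointwise identity $\mu_\e-|\nabla U_\e|\L^3=(\sqrt{\e/2}|\nabla u_\e|-\sqrt{\bar W(u_\e)/\e})^2\L^3\le|\xi_\e|$ and Proposition~\ref{prop:discrepancy}. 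The one place where you are slightly more informal than the paper is the passage from unit density of $\sigma^{-1}V$ to strict $BV$ convergence: the paper makes this explicit via \cite[Lemma~1]{OR} (mass of the associated currents converges, $M(\underline c(V_\e))\to M(\underline c(V))=\sigma\H^2(J_u)$) followed by the elementary squeeze estimate using $|D(\phi\circ u_\e)|\le M_\e(u_\e)$, rather than a direct citation of the ``mechanism of [RS]''; you should spell that step out, but the underlying idea is the one you intend.
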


\begin{proof} We will show the convergence \eqref{eq:mfpnew} using  Proposition \ref{prop:compactness}. Indeed, \eqref{eq:mfpnew} follows from that proposition if there exists $u\in BV(\R^3;\{-1,1\})$ such that, up to subsequence, $u_\eps\to u$ strictly in $BV(\R^3)$.
  Taking a further subsequence, we may assume that 
 $u_\eps\to u$ a.e. on $\R^3$. We define the set of finite perimeter $E\subset \R^3$ by writing $u(x)=2\chi_E(x)-1$. 
 From the Modica-Mortola trick, we have that 
 \begin{equation}\label{eq:6}
  \int_{\R^3} |\nabla (\phi\circ u_\eps)| \dx
 \end{equation}
 is uniformly bounded. 

\medskip

We have that $\phi(t)=\int_{-1}^t\sqrt{2}|1-s^2|\d s$, and hence $|\phi(t)|\leq C (|t|^3+1)$. Let $K\subset \R^3$ be compact. We may assume $\e\in (0,1)$. By $\int_{K}|u_\e|^4\leq C(K) (\int_K \bar W(u_\e)\d x+1)\leq C(K)(\Lambda+1)$, we get that $\|u_\e\|_{L^q(K)}\leq C(K,\Lambda)$ for every $q\leq 4$. In particular, $\|\phi\circ u_\e\|_{L^1(K)}\leq C(K,\Lambda)$, independently of $\e$. Combining this with \eqref{eq:6}  and the
 compactness theorem for  $BV$ functions, we can find a subsequence and $U\in BV(\R^3)$ such that $\phi\circ u_\eps\to U$ in $L^1_{\loc}(\R^3)$ and almost everywhere when $\eps\to 0$. Hence, for a.e. $x\in\R^3$, $\phi\circ u(x)=U(x)$ and $u\in BV(\R^3)$. Thus, up to a subsequence, $u_\eps\stackrel{*}{\rightharpoonup}u$ in $BV(\R^3)$.

\medskip
 
Let $\mu_\eps$ be as in \eqref{mu}
and let $\nu_\eps\colon\R^3\to\partial B_1(0)$ be a Borel-measurable function extending $\nabla u_\eps/|\nabla u_\eps|$ on $\{\nabla u_\eps=0\}$.
We define $V_\eps:=\mu_\eps\otimes\nu_\eps$ to be the corresponding generalized varifold, that is \begin{equation*}
	\int_{\R^3\times G^o(3,2)}\varphi(x,S)\d V_\eps(x,S)
	= \int_{\R^3}\varphi(x,\star\nu_\eps(x))\d\mu_\eps(x)\quad\text{ for }\varphi\in C^0_c(\R^{3}\times\R)\,,
\end{equation*}
where we identify $S\in G^o(3,2)$ with the simple unit vector in $\Lambda(3,2)$ orienting $S$. 
Then as $\|V_\eps\|=\mu_\eps$ and $\mu_\e\wsto \mu\in \mathcal M_+(\R^3)$, we obtain by Theorem \ref{thm:lowerbd_integrality}  that there exists $V\in \mathcal M_+(\R^3\times G^o(3,2))$ possessing generalized mean curvature $H_V\in L_{\|V\|}^2(\R^3)$ such that  
\[
\begin{split}
V_\eps&\wsto V\quad \text{ in }\mathcal M_+(\R^3\times G^o(3,2))  \,,\\
\sigma^{-1} \|V\|&= \H^{2}\res J_u\,,\\
\int_{\R^3}|H_V|^2\d\|V\|&\le  \liminf_{\eps\searrow0} 	\int_{\R^3} \frac{1}{\eps}\left(\frac{\bar W'(u_\eps)}{\eps}-\eps\Delta u_\eps\right)^2\dx\,.
	\end{split}
\]


We write $\tilde V=\sigma^{-1}V$.
Since $H_{\tilde V}=H_V$, we obtain 
\begin{equation*}
\int_{\R^3}|H_{\tilde V}|^2\d\|\tilde V\|\leq 8\pi-\tilde \delta\,,
\end{equation*}
with $\tilde \delta=\frac{\delta}{\sigma}$. By  \eqref{eq:7}, $\tilde V$ is of density one. Therefore $S:=\underline c(\tilde V)=\llbracket J_u,\star\nu_u,1\rrbracket$ and from \cite[Lemma 1]{OR}  we have that 
\begin{equation*}
\int_{\R^3}\left(\frac {\bar W(u_\eps)}\eps+\frac\eps2|\nabla u_\eps|^2\right)\dx=M(\underline c(V_\eps))\to M(\underline c(V))=\sigma\H^{2}(J_u)=\sigma|Du|(\R^3)\,.
\end{equation*}
By the weak convergence $u_\eps\stackrel{*}{\rightharpoonup}u$ in $BV(\R^3)$ we have 
\begin{equation*}
\begin{split}
\sigma|Du|(\R^3)& \le \limsup_{\eps\to 0} \sigma|D u_\eps|(\R^3)\\ &=
 \limsup_{\eps\to 0} |D(\phi(u_\eps))|(\R^3)\\ &= \limsup_{\eps\to 0}\int_{\R^3}\sqrt{2\bar W(u_\eps)}|\nabla u_\eps|\dx \\
&\le 
\limsup_{\eps\to 0} \int_{\R^3}\left(\frac {\bar W(u_\eps)}\eps+\frac\eps2|\nabla u_\eps|^2\right)\dx=\sigma|Du|(\R^3)\,
\end{split}
\end{equation*}
which in turn implies $|Du_\eps|(\R^3)\to |Du|(\R^3)$. Hence we infer that $u_\eps\to u$ strictly in $BV$. This proves \eqref{eq:mfpnew}.

\medskip

To show \eqref{eq:82}, we can observe that for every $\varphi\in C^0_c(\R^3\times\R)$,
\begin{equation*}
\begin{split}
     \bigg|  \int_{\R^3} \varphi(x,v_\eps)\d\mu_\eps- \int_{\R^3} \varphi(x,v_\eps)|\nabla U_\eps|\d x \bigg|&\le \|\varphi\|_{L^\infty(\R^3)}\left(\mu_\eps(\R^3)- \int_{\R^3}|\nabla U_\eps|\d x \right)\\
     &\le \|\varphi\|_{L^\infty(\R^3)} |\xi_\eps|(\R^3)\,,
\end{split}
\end{equation*}
where we have used $(a^2+b^2)-2ab=(a-b)^2\leq |a^2-b^2|$ with $a=\sqrt{\e}|\nabla u_\e|$ and $b=\sqrt{2W(u_\e)/\e}$.
The right hand side above goes to $0$ thanks to Proposition \ref{prop:discrepancy}.

It remains  to show $\lim\limits_{j\to\infty}\int_{\S_{\eps,j}}|v_{\eps}|^q\d\mu_\eps=0$ uniformly in $l$, where  $\S_{\eps,j}:=\{x\in\R^n:|x|\geq j \text{ or } |v_{\eps}(x)|\geq j\}$. 
As in the \textbf{Step 4} of the proof of Proposition \ref{prop:compactness}, 
since $\mu_\eps\wsto \sigma \H^{n-1}\res J_u$, we have by Prokhorov's theorem
\[
\lim_{j\to\infty}\mu_\eps(\{x:|x|\geq j\})= 0\quad\text{ uniformly in } l\in\N\,.
\]
Hence 
\[
  \begin{split}
    \limsup_{j\to\infty}&\int_{\{x:|x|\geq j\}}|v_{\eps}|^q\d\mu_\eps\\
&\leq \limsup_{j\to\infty} \left(\int_{\{x:|x|\geq j\}}|v_{\eps}|^p\d\mu_\eps\right)^{q/p}\left(\int_{\{x:|x|\geq j\}}\d\mu_\eps\right)^{1-q/p}\\
&\leq C\limsup_{j\to\infty} \left(\int_{\{x:|x|\geq j\}}\e^{-1}\Bar{W}(v_{\eps})\left(\frac{\e}{2}|\nabla u_{\eps}|^2+\e^{-1}\Bar{W}(u_{\eps})\right)\d x\right)^{q/p}\left(\int_{\{x:|x|\geq j\}}\d\mu_\eps\right)^{1-q/p}\\
&\leq C\limsup_{j\to\infty}\Lambda^{q/p}\left(\int_{\{x:|x|\geq j\}}\d\mu_\eps\right)^{1-q/p}\\
&=0 \quad\text{ uniformly in } l\in\N\,.
  \end{split}
\]
Furthermore, using again the same estimates,
\[
  \begin{split}
    \limsup_{j\to\infty}\int_{\{x:v_{\eps}(x)\geq j\}}|v_{\eps}|^q\d\mu_\eps
&\leq \limsup_{j\to\infty}j^{q-p} \int_{\{x:v_{\eps}(x)\geq j\}}|v_{\eps}|^p\d\mu_\eps\\
&\leq \limsup_{j\to\infty}j^{q-p} \Lambda\,.
\end{split}
\]
This completes the proof.
\end{proof}
\begin{proposition}[Lower bound]\label{prop:lower-bound2}  Let $n=3$.
    	Let $I_\eps, J_\eps, M_\eps$ be defined as in \eqref{def:I_e}, \eqref{def:J_e} and \eqref{def:M_e} respectively. 
  	 Let $((u_\eps,v_\eps))_\e\subset (L^1_{\loc}(\R^3))^2$ be a sequence that converges to $(u,v)\in BV(\R^3;\{-1,1\})\times BV(S;\{-1,1\})$, with $S=\llbracket J_u,*\nu_u,1\rrbracket$, in the following sense: 
     \begin{equation*}
        u_\eps\to u \quad \text{ strictly in } BV_{\rm loc.}(\R^3)\,,
    \end{equation*}
    \begin{equation}\label{eq:8}
    \left(\left(\frac{\e}{2}|\nabla u_\e|^2+\e^{-1}\bar W(u_\e)\right)\L^3,v_\e\right)\to (\sigma \H^{2}\res J_u, v)\quad\text{ in }L^q
  \end{equation}
as measure-function pairs for every $q\in [1,4)$.
    Then 
  		\begin{equation}\label{liminf-inequality-2}
  			\liminf_{\eps\searrow0}I_\eps(u_\eps,v_\eps)+ J_\eps(u_\eps,v_\eps)\ge I(u,v)+ J(u,v)\,,
  		\end{equation}
  		where $I$ is as in \eqref{def:I} and $J\colon (L^1_\loc(\R^3))^2\to[0,+\infty]$ is defined as
  		\begin{equation}\label{def:J_2}
  			J(u,v)\defas \sigma \int_{J_u}\left(a_1\frac{1+v}{2}+a_2\frac{1-v}{2}\right)|H_{J_u}|^2\d\mathcal{H}^{2}\,,
  		\end{equation}
  		if $(u,v)\in BV(\R^3;\{-1,1\})\times BV(S;\{-1,1\})$, and  equal to $+\infty$ otherwise.
\end{proposition}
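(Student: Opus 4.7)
The plan is to bound the two contributions $I_\eps(u_\eps,v_\eps)$ and $J_\eps(u_\eps,v_\eps)$ separately. For the line-tension part, I would first verify that the measure-function pair convergence \eqref{eq:8}, together with the discrepancy estimate $|\xi_\eps|(\R^3)\to 0$ from Proposition \ref{prop:discrepancy}, implies (upon extraction of a further subsequence) the slice-wise convergence \eqref{lower_bound_slice} required by Proposition \ref{prop:lower_bound}. Indeed, since $\eps|\nabla u_\eps|^2\L^3 = \mu_\eps+\xi_\eps$, one obtains strong measure-function pair convergence of $(|\nabla U_\eps|\L^3, v_\eps)$ to $(\sigma\H^2\res J_u, v)$; together with the coarea formula this forces the $L^1_t$-averaged integrals $\int_{\partial_*E_\eps^t}\varphi(x,v_\eps)\,d\H^2$ to converge to the constant $\int_{J_u}\varphi(x,v)\,d\H^2$ for every $\varphi\in C^0_c(\R^3\times\R)$; a diagonal argument over a countable dense family of such $\varphi$ then yields the pointwise a.e.\ slice-wise convergence. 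Proposition \ref{prop:lower_bound} then provides $\liminf_\eps I_\eps(u_\eps,v_\eps)\geq \sigma^2\H^1(J_v)$.

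For the weighted Willmore part, the strategy is to adapt the R\"oger--Sch\"atzle framework to the presence of the weight $a^{\bar\omega}(v_\eps)$. I would introduce the weighted oriented varifold
\begin{equation*}
V_\eps^a(\varphi):=\int \varphi(x,\star\nabla u_\eps/|\nabla u_\eps|)\,a^{\bar\omega}(v_\eps)\,d\mu_\eps,
\end{equation*}
whose weak-$*$ limit is $V^a(\varphi)=\sigma\int_{J_u}a^{\bar\omega}(v)\varphi(x,\star\nu_u)\,d\H^2$, obtained by combining $V_\eps\wsto V$ from Theorem \ref{thm:lowerbd_integrality} with the strong measure-function pair convergence of $(\mu_\eps,v_\eps)$. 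Applying the product rule for tangential divergence together with the formula in Proposition \ref{prop:1stvariationVe} yields
\begin{equation*}
\int a^{\bar\omega}(v_\eps)Y_\eps\cdot\varphi\,dx=-\delta V_\eps^a(\varphi)+X_\eps(\varphi)-Z_\eps(\varphi),
\end{equation*}
with $Y_\eps=(-\eps\Delta u_\eps+\eps^{-1}\bar W'(u_\eps))\nabla u_\eps$, $X_\eps(\varphi)$ a $\xi_\eps$-type remainder (vanishing thanks to $|\xi_\eps|(\R^3)\to 0$), and $Z_\eps(\varphi)=\int P_{T_\eps}\nabla a^{\bar\omega}(v_\eps)\cdot\varphi\,d\mu_\eps$, where $T_\eps=\{\nabla u_\eps\}^\perp$. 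The term $Z_\eps(\varphi)$ is expected to converge, by a tangential integration-by-parts on the rectifiable limit $J_u$, to the singular part of $-\delta V^a(\varphi)$ concentrated on the jump set $J_v$. These singular contributions then cancel exactly, leaving
\begin{equation*}
\lim_\eps\int a^{\bar\omega}(v_\eps)Y_\eps\cdot\varphi\,dx=\sigma\int_{J_u}a^{\bar\omega}(v)H_{J_u}\cdot\varphi\,d\H^2.
\end{equation*}

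A Cauchy--Schwarz estimate based on the factorization $|Y_\eps|^2=\eps|\nabla u_\eps|^2\cdot\eps^{-1}(\eps^{-1}\bar W'(u_\eps)-\eps\Delta u_\eps)^2$ then gives
\begin{equation*}
\left|\int a^{\bar\omega}(v_\eps)Y_\eps\cdot\varphi\,dx\right|^2\leq J_\eps(u_\eps,v_\eps)\cdot\int a^{\bar\omega}(v_\eps)\eps|\nabla u_\eps|^2|\varphi|^2\,dx.
\end{equation*}
Since $\eps|\nabla u_\eps|^2\L^3=\mu_\eps+\xi_\eps$, the right-most factor converges, by the strong measure-function pair convergence of $(\mu_\eps,v_\eps)$ and $|\xi_\eps|(\R^3)\to 0$, to $\sigma\int_{J_u}a^{\bar\omega}(v)|\varphi|^2\,d\H^2$. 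Passing to the $\liminf$ and then taking the supremum over $\varphi\in C^1_c(\R^3;\R^3)$ approximating $H_{J_u}$ in $L^2(\|V^a\|)$ yields the weighted Willmore bound $\sigma\int_{J_u}a^{\bar\omega}(v)|H_{J_u}|^2\,d\H^2\leq\liminf_\eps J_\eps(u_\eps,v_\eps)$, which combined with the line-tension bound delivers \eqref{liminf-inequality-2}.

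The principal technical obstacle is the rigorous identification of $\lim_\eps Z_\eps(\varphi)$ with the singular part of $\delta V^a$ on $J_v$. Since $|\nabla v_\eps|$ is not uniformly integrable with respect to $\mu_\eps$, this step requires exploiting that $\supp (a^{\bar\omega})'\subset\subset(-1,1)$ (so that $\bar W(v_\eps)$ is bounded below on its support), together with the uniform bound on $I_\eps$ which controls $\int\sqrt{\bar W(v_\eps)}|\nabla v_\eps|\,d\mu_\eps$. The exact identification parallels the $BV$ Green formula on rectifiable surfaces for the limit function $v$, and corresponds to the ``strong convergence of 3-tuples $(\mu_\eps,u_\eps,v_\eps)$'' alluded to in the introduction.
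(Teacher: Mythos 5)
Your treatment of the line‑tension part $I_\eps$ is in the spirit of the paper: the paper simply invokes Proposition \ref{prop:lower_bound}, and your reconstruction of the slice‑wise hypothesis \eqref{lower_bound_slice} from \eqref{eq:8} via the identity $\eps|\nabla u_\eps|^2\L^3=\mu_\eps+\xi_\eps$, the discrepancy estimate and a coarea/diagonal argument is a reasonable way to justify the step the paper leaves implicit.

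Your treatment of the Willmore part, however, follows a genuinely different and, as written, incomplete route. The paper never introduces the weighted varifold $V_\eps^a$ or its first variation. Instead it works entirely inside the measure--function pair framework: from Proposition \ref{prop:1stvariationVe} and Theorem \ref{thm:lowerbd_integrality} one has the \emph{unweighted} weak convergence
\begin{equation*}
\Bigl(\eps|\nabla u_\eps|^2\L^3,\ \tfrac{(\eps^{-1}\bar W'(u_\eps)-\eps\Delta u_\eps)}{\eps|\nabla u_\eps|}\tfrac{\nabla u_\eps}{|\nabla u_\eps|}\Bigr)\rightharpoonup(\sigma\H^2\res J_u,\ H_{J_u})
\end{equation*}
as measure--function pairs; from Lemma \ref{ConvAddVar} and Lemma \ref{lem:strong_uv_convergence} one obtains the \emph{strong} measure--function pair convergence $(\eps|\nabla u_\eps|^2\L^3,\sqrt{a^{\bar\omega}(v_\eps)})\to(\sigma\H^2\res J_u,\sqrt{a^{\bar\omega}(v)})$ (Lemma \ref{lem:sqrt_strong_mfp}); Moser's \cite[Proposition~3.2]{Mos01} lets one multiply a strongly convergent pair with a weakly convergent one, giving weak convergence of $(\eps|\nabla u_\eps|^2\L^3,\sqrt{a^{\bar\omega}(v_\eps)}\,(\cdots))$, and Hutchinson's lower semicontinuity theorem for convex integrands then closes the argument. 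This sidesteps the first variation of the weighted varifold entirely.

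The step in your plan that does not hold up as written is the identification of $\lim_\eps Z_\eps(\varphi)$ with the singular part of $-\delta V^a$ on $J_v$ and the ensuing ``exact cancellation.'' There are two concrete problems. First, you need $\delta V^a_\eps(\varphi)\to\delta V^a(\varphi)$ to even write the limit equation; weak-$*$ convergence of the varifolds $V_\eps^a$ does not by itself give convergence of first variations, and the standard way to repair this (Allard-type compactness with bounded first variation) is exactly what is unavailable here because $\delta V_\eps^a$ contains the uncontrolled term $Z_\eps$. Second, and more seriously, the only control you have on $\nabla a^{\bar\omega}(v_\eps)=(a^{\bar\omega})'(v_\eps)\nabla v_\eps$ is an $L^1_{\mu_\eps}$-type bound obtained from the uniform bound on $I_\eps$ together with $\supp(a^{\bar\omega})'\subset\subset(-1,1)$; this does not furnish the uniform integrability or the compactness that would be needed to identify $\lim_\eps Z_\eps$ as the specific co-normal measure on $J_v$. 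You acknowledge this obstacle yourself, but it is not a loose end — it is the crux of the argument, and without it the claimed identity $\lim_\eps\int a^{\bar\omega}(v_\eps)Y_\eps\cdot\varphi\,dx=\sigma\int_{J_u}a^{\bar\omega}(v)H_{J_u}\cdot\varphi\,d\H^2$ (on which your Cauchy--Schwarz step depends) is unproved. The paper's route, passing the weight through the strong measure--function pair convergence of $\sqrt{a^{\bar\omega}(v_\eps)}$ rather than through a tangential integration by parts, is precisely what avoids having to control $\nabla v_\eps$ in $L^1_{\mu_\eps}$.
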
 
%
In order to prove Proposition \ref{prop:lower-bound2}, we prove an intermediate result with an additional  dimension.
Given $(u_\eps)_{\eps>0}$ as above,  let 
\[
\tilde E_\eps^t=\{x\in\R^3:u_\e(x)>t\}
\] for every $\eps>0$ and $t\in\R$. Furthermore define 
$\tilde \mu_{\eps,t}\in \mathcal{M}(\R^3)$ by $\tilde \mu_{\eps,t}\defas\mathcal{H}^{2}\res\partial_*\tilde E_\eps^t$. Let  $\zeta_\eps\in \mathcal{M}(\R^{4})$ be defined by 
\begin{equation*}
        \int_{\R^{4}} g(x,t)\,{\rm d}\zeta_\eps(x,t)=\int_\R\int_{\R^3}g(x,t)\,{\rm d}\tilde \mu_{\eps,t}(x) \dt\,,
    \end{equation*}
    for every $g\in C^0_c(\R^3\times\R)$.

\begin{lemma}[Higher dimension]\label{ConvAddVar} Let $(u_\eps)_{\eps>0}$ be as in Proposition \ref{prop:lower-bound2}. Then there holds
    \begin{equation*}
         \lim_{\eps\to 0}  \int_{\R^{4}}|\eps|\nabla u_\eps(x)|-\sqrt{2\bar W(t)} |\,{\rm d}\zeta_\eps(x,t)=0\,.
    \end{equation*}
\end{lemma}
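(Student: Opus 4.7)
The plan is to rewrite the integral against $\zeta_\eps$ as an integral over $\R^3$ via the coarea formula, and then exploit the discrepancy estimate of Proposition~\ref{prop:discrepancy} through a Cauchy--Schwarz argument that cancels the factors of $\eps$.

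First I would use the coarea formula together with the level-set result of Malý--Swanson--Ziemer (the fact already invoked as observation \textbf{O.3} in the proof of Proposition~\ref{prop:compactness}): for a.e.\ $t \in \R$ the precise representative of $u_\eps$ equals $t$ $\H^2$-a.e.\ on $\partial_*\{u_\eps > t\}$. Applying this to $g(x,t) = \bigl|\eps|\nabla u_\eps(x)| - \sqrt{2\bar W(t)}\bigr|$ gives
\[
\int_{\R^4} \bigl|\eps|\nabla u_\eps(x)| - \sqrt{2\bar W(t)}\bigr|\,d\zeta_\eps(x,t)
= \int_{\R^3} \bigl|\eps|\nabla u_\eps(x)| - \sqrt{2\bar W(u_\eps(x))}\bigr|\,|\nabla u_\eps(x)|\,dx.
\]

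Next, setting $a_\eps := \sqrt{\eps}\,|\nabla u_\eps|$ and $b_\eps := \sqrt{2\bar W(u_\eps)/\eps}$, the integrand on the right equals $\sqrt{\eps}\,|a_\eps - b_\eps|\cdot|\nabla u_\eps|$. By Cauchy--Schwarz together with the elementary inequality $(a_\eps - b_\eps)^2 \le |a_\eps^2 - b_\eps^2| = 2\bigl|\tfrac{\eps}{2}|\nabla u_\eps|^2 - \bar W(u_\eps)/\eps\bigr|$, one obtains
\[
\int_{\R^3} \sqrt{\eps}\,|a_\eps - b_\eps|\,|\nabla u_\eps|\,dx
\le \Bigl(\eps\!\int_{\R^3}\!(a_\eps-b_\eps)^2\,dx\Bigr)^{1/2}\!\Bigl(\int_{\R^3}|\nabla u_\eps|^2\,dx\Bigr)^{1/2}
\le \bigl(2\eps\,|\xi_\eps|(\R^3)\bigr)^{1/2}\bigl(2 M_\eps(u_\eps)/\eps\bigr)^{1/2},
\]
and the factors of $\eps$ cancel, leaving $2\sqrt{M_\eps(u_\eps)\,|\xi_\eps|(\R^3)}$.

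Since $M_\eps(u_\eps)$ is uniformly bounded by assumption, the conclusion will follow once $|\xi_\eps|(\R^3) \to 0$. Proposition~\ref{prop:discrepancy} furnishes the weak-$*$ convergence $|\xi_\eps| \wsto 0$ in $\mathcal M(\R^3)$; to upgrade this to convergence of the total mass, I would observe that $|\xi_\eps| \le \mu_\eps$ as measures (since $|a-b| \le a+b$ for $a,b\ge 0$), and that $(\mu_\eps)_\eps$ is tight because it converges weakly-$*$ to $\sigma\,\H^2\res J_u$ (Proposition~\ref{prop:compactness2}). Hence $(|\xi_\eps|)_\eps$ is tight, and tightness combined with $|\xi_\eps|\wsto 0$ gives $|\xi_\eps|(\R^3) \to 0$, completing the argument.

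The main obstacle is the bookkeeping of the $\eps$ powers — making sure the Cauchy--Schwarz split is arranged so that the $\eps$ from $(\eps|\nabla u|-\sqrt{2\bar W})^2 = \eps(a_\eps-b_\eps)^2$ pairs with the $\eps^{-1}$ implicit in $\int|\nabla u_\eps|^2\,dx \le 2M_\eps(u_\eps)/\eps$. The remaining subtleties (identifying $t$ with $u_\eps(x)$ on $\partial_*\{u_\eps>t\}$, and the passage from weak-$*$ to total-mass convergence of $|\xi_\eps|$) are essentially standard given the tools already collected in the paper.
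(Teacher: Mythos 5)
Your proof is correct and takes a genuinely different route from the paper's. You both start (or end, in the paper's case) by rewriting the $\zeta_\eps$-integral via the coarea formula and the Mal\'y--Swanson--Ziemer result, but the reductions to the discrepancy measure diverge. The paper proceeds algebraically: it multiplies the discrepancy integrand by $\sqrt{\bar W(u_\eps)}/\bigl(\sqrt{\bar W(u_\eps)} + \tfrac{\eps}{\sqrt 2}|\nabla u_\eps|\bigr)\le 1$, uses the difference-of-squares factoring $\tfrac{\eps}{2}|\nabla u_\eps|^2 - \tfrac{\bar W}{\eps} = \tfrac1\eps\bigl(\tfrac{\eps}{\sqrt2}|\nabla u_\eps| - \sqrt{\bar W}\bigr)\bigl(\tfrac{\eps}{\sqrt2}|\nabla u_\eps| + \sqrt{\bar W}\bigr)$ to rewrite the product, and then applies the triangle inequality (with the discrepancy itself as the error term) to get $\int_{\R^3}|\nabla u_\eps|\,\bigl|\tfrac{\eps}{2}|\nabla u_\eps| - \tfrac{\sqrt{\bar W(u_\eps)}}{\sqrt2}\bigr|\dx\to 0$. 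You instead apply Cauchy--Schwarz together with $(a-b)^2\le|a^2-b^2|$ for $a,b\ge 0$, arranging the powers of $\eps$ so that they cancel. Your route is more compact and avoids the less motivated algebraic manipulations; the paper's route produces an $L^1$ estimate without invoking any $L^2$ control of $|\nabla u_\eps|$.

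One remark on the final step: the paper cites Proposition~\ref{prop:discrepancy} and immediately asserts $\int_{\R^3}\bigl|\tfrac{\eps}{2}|\nabla u_\eps|^2-\tfrac{\bar W(u_\eps)}{\eps}\bigr|\dx\to 0$ (its display \eqref{VanDis}), i.e.\ $|\xi_\eps|(\R^3)\to 0$, treating this as what the cited results of R\"oger--Sch\"atzle/Tonegawa actually provide. So your tightness detour is not strictly needed and you may simply invoke $|\xi_\eps|(\R^3)\to 0$ directly as the paper does. If you do wish to argue tightness, note that the phrase ``tight because it converges weakly-$*$'' is not accurate as stated: weak-$*$ convergence alone never yields tightness (e.g.\ $\delta_{1/\eps}\wsto 0$). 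What you need in addition is convergence of the total masses $\mu_\eps(\R^3)$, which does hold in this setting but should be stated explicitly rather than attributed to weak-$*$ convergence alone.
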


\begin{proof}
    By Proposition \ref{prop:discrepancy},
    \begin{equation}\label{VanDis}
        \lim_{\eps\to 0}\int_{\R^3} \bigg|\frac{\eps}{2}|\nabla u_\eps|^2-\frac{\bar W(u_\eps)}{\eps}\bigg| \dx=0\,.
    \end{equation}
    Multiplying the integrand with $\dfrac{\sqrt{\bar W(u_\eps)}}{\sqrt{\bar W(u_\eps)}+\frac{\eps}{\sqrt{2}}|\nabla u_\eps|}\leq 1$ leads to 
    \begin{equation}\label{eq1}
        \lim_{\eps\to 0}\int_{\R^3} \bigg|\frac{\eps}{2}|\nabla u_\eps|^2-\frac{\bar W(u_\eps)}{\eps}\bigg|\dfrac{\sqrt{\bar W(u_\eps)}}{\sqrt{\bar W(u_\eps)}+\frac{\eps}{\sqrt{2}}|\nabla u_\eps|} \dx=0.
    \end{equation}
    The  above integral can be rewritten as
    \begin{equation}\label{eq2}
        \int_{\R^3} \dfrac{\sqrt{\bar W(u_\eps)}}{\eps}\bigg|\frac{\eps}{\sqrt2}|\nabla u_\eps|-\sqrt{\bar W(u_\eps)} \bigg|\dx=\int_{\R^3} |\nabla u_\eps|\bigg| \frac{\sqrt{\bar W(u_\eps)}}{\sqrt{2}}-\frac{\bar W(u_\eps)}{\eps|\nabla u_\eps|}\bigg|\dx\,.
    \end{equation}
Now combining \eqref{VanDis}--\eqref{eq2} and using the triangle inequality we find
    \begin{equation*}
\begin{split}
     \lim_{\eps\to 0}  \int_{\R^3} & |\nabla u_\eps|\bigg| \frac{\eps}{2}|\nabla u_\eps|-\frac{\sqrt{\bar W(u_\eps)}}{\sqrt{2}} \bigg|\dx
  \\  & \le \lim_{\eps\to0}
     \int_{\R^3}  |\nabla u_\eps|\bigg| \frac{{\bar W(u_\eps)}}{\eps|\nabla u_\eps|}-\frac{\sqrt{\bar W(u_\eps)}}{\sqrt{2}} \bigg|\dx+ 
      \lim_{\eps\to 0}\int_{\R^3} \bigg|\frac{\eps}{2}|\nabla u_\eps|^2-\frac{\bar W(u_\eps)}{\eps}\bigg| \dx
     =0\,.
\end{split}
    \end{equation*}
Finally by the coarea formula
\begin{equation*}
\begin{split}
         \lim_{\eps\to 0}  \int_{\R^3}  |\nabla u_\eps|\bigg| \frac{\eps}{2}|\nabla u_\eps|-\frac{\sqrt{\bar W(u_\eps)}}{\sqrt{2}} \bigg|\dx&=  \lim_{\eps\to 0}  \int_\R\int_{\partial_* E_\eps^t}\bigg| \frac{\eps}{2}|\nabla u_\eps|-\frac{\sqrt{\bar W(t)}}{\sqrt{2}} \bigg|\dHn\dt\\
     & =\lim_{\eps\to 0}  \int_\R\int_{\R^3}\bigg| \frac{\eps}{2}|\nabla u_\eps|-\frac{\sqrt{\bar W(t)}}{\sqrt{2}} \bigg|\,{\rm d}\tilde \mu_{\eps,t}(x)\dt=0\,,
\end{split}
\end{equation*}
    and thus we conclude.
\end{proof}
We prove the following convergence of the pair $(u_\eps,v_\eps)_\e$:
\begin{lemma}[Strong convergence] 
\label{lem:strong_uv_convergence}
Let $(u_\eps,v_\eps)_{\eps}$ and $(u,v)$ be as in Proposition \ref{prop:lower-bound2}. Then 
    for every $\varphi\in C^0_c(\R^3\times\R)$ we have 
    \begin{equation*}
     \lim_{\eps\to 0}   \int_{\R^{4}}\eps|\nabla u_\eps(x)| \varphi(x,v_\eps(x))\,{\rm d}\zeta_\eps(x,t)=\sigma\int_{J_u}\varphi(x,v(x))\d \H^2(x).
    \end{equation*}
\end{lemma}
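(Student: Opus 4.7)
The plan is to reduce the integral over $\R^4$ to one over $\R^3$ via the coarea formula, and then exploit the assumed strong measure-function pair convergence of $(\mu_\eps,v_\eps)$ together with the vanishing of the discrepancy measure from Proposition \ref{prop:discrepancy}.

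First I would unfold the definition of $\zeta_\eps$ and apply the coarea formula to the Sobolev function $u_\eps$ to rewrite
\begin{equation*}
\int_{\R^4} \eps|\nabla u_\eps(x)|\,\varphi(x,v_\eps(x))\,{\rm d}\zeta_\eps(x,t)
 = \int_\R \int_{\partial_* \tilde E_\eps^t} \eps|\nabla u_\eps|\,\varphi(x,v_\eps)\,{\rm d}\H^2\,\d t
 = \int_{\R^3} \eps|\nabla u_\eps|^2\,\varphi(x,v_\eps)\,\d x\,.
\end{equation*}
The identity $\eps|\nabla u_\eps|^2\,\L^3 = \mu_\eps + \xi_\eps$ of signed measures on $\R^3$ then splits this into
\begin{equation*}
\int_{\R^3}\varphi(x,v_\eps)\,{\rm d}\mu_\eps \; + \; \int_{\R^3}\varphi(x,v_\eps)\,{\rm d}\xi_\eps\,.
\end{equation*}

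The first term converges to the desired limit by the standing hypothesis: the strong convergence $(\mu_\eps,v_\eps)\to (\sigma\,\H^2\res J_u, v)$ in $L^q$ is precisely Definition \ref{def:measure-function-pairs}(ii), which when tested on $\varphi\in C^0_c(\R^3\times\R)$ yields $\int\varphi(x,v_\eps)\,{\rm d}\mu_\eps\to\sigma\int_{J_u}\varphi(x,v)\,{\rm d}\H^2$.

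For the second term I would use that $\varphi$ has compact support in $\R^3\times\R$, so there is a compact $K\subset\R^3$ with $|\varphi(x,v_\eps(x))|\le\|\varphi\|_\infty\chi_K(x)$, whence
\begin{equation*}
\left|\int_{\R^3}\varphi(x,v_\eps)\,{\rm d}\xi_\eps\right| \le \|\varphi\|_\infty\,|\xi_\eps|(K)\,.
\end{equation*}
By Proposition \ref{prop:discrepancy} one has $|\xi_\eps|\wsto 0$ in $\M(\R^3)$, so picking any $\psi\in C_c(\R^3)$ with $0\le\psi\le 1$ and $\psi\equiv 1$ on $K$ yields $|\xi_\eps|(K)\le\int\psi\,{\rm d}|\xi_\eps|\to 0$. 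The only subtle point---and the step I would be most careful about---is that $\varphi(\cdot,v_\eps(\cdot))$ is merely Borel, so the weak-$*$ convergence of $|\xi_\eps|$ does not apply to it directly; this is precisely what the compactness of $\supp\varphi$ circumvents by supplying a continuous dominating cut-off.
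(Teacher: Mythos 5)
Your proof is correct, and it takes a genuinely different---arguably cleaner---route than the paper's. The paper first invokes Lemma~\ref{ConvAddVar} (which is itself a repackaging of the discrepancy estimate, Proposition~\ref{prop:discrepancy}) to replace the factor $\eps|\nabla u_\eps|$ inside the $\zeta_\eps$-integral by $\sqrt{2\bar W(t)}$; only then does it integrate out $t$, arguing separately that for a.e.\ $t$ the slice integrals $\int_{\partial_*\tilde E_\eps^t}\varphi(x,v_\eps)\,{\rm d}\H^2$ converge to $\int_{J_u}\varphi(x,v)\,{\rm d}\H^2$, and finishing with dominated convergence in $t$. That slice-convergence step is the delicate point: it needs more than the plain weak-$*$ convergence of $\H^2\res\partial_*\tilde E_\eps^t$ that the paper derives from strict $BV$ convergence, and the paper leans there on slice information ``from Proposition~\ref{prop:compactness2}'' that is not explicitly carried into the hypotheses of Proposition~\ref{prop:lower-bound2}. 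Your argument avoids that entirely: you integrate out $t$ at once via the coarea formula, reducing to $\int_{\R^3}\eps|\nabla u_\eps|^2\varphi(x,v_\eps)\,\dx$, and then split $\eps|\nabla u_\eps|^2\,\L^3=\mu_\eps+\xi_\eps$. The $\mu_\eps$ term is handled directly by the assumed strong measure-function pair convergence (the first clause of Definition~\ref{def:measure-function-pairs}(ii)), and the $\xi_\eps$ term vanishes by Proposition~\ref{prop:discrepancy} together with the compact $x$-support of $\varphi$ and the cut-off trick you rightly flag. Both proofs rest on the same two ingredients---vanishing discrepancy and the measure-function pair convergence of $(\mu_\eps,v_\eps)$---but yours wires them up more economically, makes Lemma~\ref{ConvAddVar} superfluous for this purpose, and sidesteps the slicing subtlety.
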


\begin{proof}
    By Lemma \ref{ConvAddVar}, it is sufficient to show that
    \begin{equation*}
     \lim_{\eps\to 0}   \int_{\R^{4}}\sqrt{2\bar W(t)} \varphi(x,v_\eps(x))\,{\rm d}\zeta_\eps(x,t)=\sigma\int_{J_u}\varphi(x,v(x))\d \H^2(x)\,,
    \end{equation*}
    for every $f\in C^0_c(\R^3\times\R)$.
     By definition of $\zeta_\eps$, and choosing $g(x,t)=\sqrt{2\bar W(t)}\varphi(x,v_\eps(x))$, we have
    \begin{equation*}
      \begin{split}
        \int_{\R^{4}}\sqrt{2\bar W(t)} \varphi(x,v_\eps)\,{\rm d}\zeta_\eps&=
        \int_\R\sqrt{2\bar W(t)}\int_{\R^3} \varphi(x,v_\eps)\,{\rm d}\tilde \mu_{\eps,t} \dt\\
&=\int_\R\sqrt{2\bar W(t)}\int_{\partial_* E_\eps^t} \varphi(x,v_\eps)\d \H^2 \dt
      \end{split}
    \end{equation*}
By  combining coarea formula and strict convergence, 
\[
  \begin{split}
    \int_{-\infty}^\infty\H^{2}(\partial_* \tilde E_\e^t)\d t&=|Du_\e|(\R^3)\to |Du|(\R^3)=\int_{-\infty}^\infty\H^{2}(\partial_* \tilde E^t)\d t\,,
  \end{split}
\]
with $\partial\tilde E^t=\{x\in \R^3: u(x)>t\}$. Hence,
\[
\H^{2}\res \partial_* \tilde E_\e^t\wsto
\begin{cases}
  \H^{2}\res J_u &\text{ for a.e. }t\in (-1,1)\\
0&\text{ for a.e. }t\in \R\setminus(-1,1)
\end{cases} \quad\text{ in } \mathcal M(\R^3)\,.
\]
Thus we obtain
\[
  \begin{split}
\lim_{\e\to 0}    \int_\R\sqrt{2\bar W(t)}\int_{\partial_* \tilde E_\eps^t} \varphi(x,v_\eps)\d \H^2 \dt&=\lim_{\e\to 0}\int_{-1}^1\sqrt{2\bar W(t)}\int_{\partial_* \tilde E_\eps^t} \varphi(x,v_\eps)\d \H^2 \dt\\
                             &=\sigma \int_{J_u} \varphi(x,v)\d \H^2\,,
  \end{split}
\]
with $v$ from Proposition \ref{prop:compactness2},
concluding the proof.
\end{proof}

\begin{lemma}
\label{lem:sqrt_strong_mfp}
With the above notation,   
\[
(\eps|\nabla u_\eps|^2 \L^3, \sqrt{a^{\bar \omega}(v_\eps)})\to (\sigma \H^2 \res \partial_* E, \sqrt{ a^{\bar\omega}(v)})
\]
in $L^q$ as measure-function pairs, for every $q\in[1,\infty)$.
\end{lemma}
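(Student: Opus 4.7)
The plan is to reduce the claim to the strong measure-function pair convergence $(\mu_\eps, v_\eps)\to(\sigma\H^{2}\res J_u, v)$ in $L^q$ for every $q\in [1,4)$, which was established in Proposition \ref{prop:compactness2}, together with the vanishing of the discrepancy measure. Observe first that $\eps|\nabla u_\eps|^2\L^3=\mu_\eps+\xi_\eps$, where $\mu_\eps,\xi_\eps$ are as in \eqref{mu}, and by Proposition \ref{prop:discrepancy} (as used in \eqref{VanDis}) we have $|\xi_\eps|(\R^3)\to 0$. Hence the measures $\eps|\nabla u_\eps|^2\L^3$ and $\mu_\eps$ are asymptotically indistinguishable in total variation, and it will suffice to work with $\mu_\eps$ in the bulk of the argument.

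The central step is the following general principle: if $g\colon\R\to\R$ is continuous and uniformly bounded, then strong convergence of $(\mu_\eps,v_\eps)$ to $(\mu,v)$ in $L^q$ implies strong convergence of $(\mu_\eps,g(v_\eps))$ to $(\mu,g(v))$ in $L^{q'}$ for every $q'\in[1,\infty)$. This applies here to $g(s)=\sqrt{a^{\bar\omega}(s)}$: by $\bar\omega\in C^\infty$ and \eqref{bound_a}, the function $a^{\bar\omega}$ is continuous and bounded between $\min\{a_1,a_2\}>0$ and $\max\{a_1,a_2\}$, so $g$ is continuous and bounded. For property $(i)$ of Definition \ref{def:measure-function-pairs}, given $\varphi\in C^0_c(\R^3\times\R)$, I would pick a cutoff $\chi_j\in C^0_c(\R)$ with $\chi_j\equiv 1$ on $[-j,j]$, set $\psi_j(x,s):=\varphi(x,g(s))\chi_j(s)\in C^0_c(\R^3\times\R)$, and apply the strong $L^q$ convergence of $(\mu_\eps,v_\eps)$ to $\psi_j$. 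The remainder $\varphi(x,g(v_\eps))-\psi_j(x,v_\eps)$ is supported in $\{|v_\eps|\ge j\}$, where Chebyshev gives $\mu_\eps(\{|v_\eps|\geq j\})\leq j^{-q}\int_{\{|v_\eps|\geq j\}}|v_\eps|^q\d\mu_\eps$, and the right-hand side is small uniformly in $\eps$ by property $(ii)$ of the strong convergence of $(\mu_\eps,v_\eps)$. Property $(ii)$ for $(\mu_\eps,g(v_\eps))$ is automatic because $g$ is bounded, so for $j>\sup|g|$ the set $\{|g(v_\eps)|\geq j\}$ is empty, and tightness of $\{\mu_\eps\}$ (a consequence of $\mu_\eps\wsto \sigma\H^2\res J_u$ together with $\mu_\eps(\R^3)\to\sigma\H^2(J_u)$) handles the tail in $x$.

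Finally, I would replace $\mu_\eps$ by $\eps|\nabla u_\eps|^2\L^3=\mu_\eps+\xi_\eps$. For $(i)$, any $\varphi\in C^0_c(\R^3\times\R)$ yields $\bigl|\int\varphi(x,g(v_\eps))\d\xi_\eps\bigr|\leq\|\varphi\|_\infty|\xi_\eps|(\R^3)\to 0$, so the extra contribution vanishes. For $(ii)$, the uniform boundedness of $g$ reduces the tail estimate to $\eps|\nabla u_\eps|^2\L^3(\{|x|\geq j\})\leq\mu_\eps(\{|x|\geq j\})+|\xi_\eps|(\R^3)$, which is small uniformly in $\eps$ for large $j$ by the previous step. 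This establishes the claimed convergence for every $q\in[1,\infty)$.

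The main technical subtlety to overcome is that the natural test function $\psi(x,s)=\varphi(x,g(s))$ associated to $\varphi\in C^0_c(\R^3\times\R)$ is only bounded, not compactly supported, in $s$, so strong convergence of $(\mu_\eps,v_\eps)$ cannot be applied to it directly; the cutoff argument together with the tail control built into strong measure-function pair convergence resolves this, with the positivity of $a^{\bar\omega}$ ensuring that $g=\sqrt{a^{\bar\omega}}$ is continuous.
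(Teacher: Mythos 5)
Your proposal is correct, but it takes a genuinely different route from the paper. The paper lifts everything to $\R^4$ via the coarea formula: it rewrites $\int_{\R^3}\varphi(x,\sqrt{a^{\bar\omega}(v_\e)})\eps|\nabla u_\e|^2\dx$ as $\int_{\R^4}\varphi(x,\sqrt{a^{\bar\omega}(v_\e)})\eps|\nabla u_\e|\d\zeta_\e$ and then invokes Lemma \ref{lem:strong_uv_convergence}, whose own proof relies on Lemma \ref{ConvAddVar} (equipartition along slices) and the slice-wise weak-$*$ convergence $\H^2\res\partial_*\tilde E^t_\e\wsto\H^2\res J_u$. You instead decompose $\eps|\nabla u_\e|^2\L^3=\mu_\e+\xi_\e$ directly in $\R^3$, invoke $|\xi_\e|(\R^3)\to 0$ (Proposition \ref{prop:discrepancy}, as already used in \eqref{VanDis}) to trade $\eps|\nabla u_\e|^2\L^3$ for $\mu_\e$, and then reduce to the general principle that post-composing a strongly convergent measure-function pair with a bounded continuous function preserves strong convergence; the cutoff argument plus the tail control built into Definition \ref{def:measure-function-pairs}(ii) for $(\mu_\e,v_\e)$ handles the lack of compact support of $(x,s)\mapsto\varphi(x,\sqrt{a^{\bar\omega}(s)})$ in $s$, and the boundedness of $\sqrt{a^{\bar\omega}}$ upgrades the exponent range to $q\in[1,\infty)$. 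Your route is more elementary and essentially makes Lemmata \ref{ConvAddVar} and \ref{lem:strong_uv_convergence} unnecessary for this particular lemma; what the paper's route buys is that it treats the graph measure $\zeta_\e$ as the natural object for the whole section, which makes the slicing structure uniform but at the cost of an extra intermediate lemma. One small imprecision in your write-up: the strong convergence $(\mu_\e,v_\e)\to(\sigma\H^2\res J_u,v)$ is at this point the standing \emph{hypothesis} \eqref{eq:8} of Proposition \ref{prop:lower-bound2}, not a conclusion you should be citing from Proposition \ref{prop:compactness2}; this doesn't change the argument but should be phrased as an assumption rather than an established fact.
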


\begin{proof}
Let $\varphi\in C^0_c(\R^{3}\times \R)$.  Using the coarea formula, we obtain 
\[
\begin{split}
\int_{\R^3}\varphi(x,\sqrt{a^{\bar \omega}(v_\eps(x))})\eps|\nabla u_\eps|^2\dx&=
\int_\R\int_{\partial_*\tilde E^t_\e}\varphi(x,\sqrt{a^{\bar \omega}(v_\eps(x))})\e|\nabla u_\e(x)|\d\H^{n-1}(x)\d t\\
&=\int_{\R^4}\varphi(x,\sqrt{a^{\bar \omega}(v_\eps(x))})\e|\nabla u_\e(x)|\d\zeta_\e(x,t)\,.
\end{split}
\]
Hence by 
Lemma \ref{lem:strong_uv_convergence}, 
\begin{equation*}
\lim_{\e\to 0}\int_{\R^3}\varphi(x,\sqrt{a^{\bar\omega}(v_\eps(x))})\eps|\nabla u_\eps|^2\dx=\sigma 
\int_{J_u} \varphi(x,\sqrt{a^{\bar \omega}( v(x))})\d \H^{2}\,.
\end{equation*}
It remains to show 
\[
\lim_{j\to\infty}\int_{\mathcal S_{\e,j}}|\sqrt{a^{\bar\omega}(v_\e)}|^q\e|\nabla u_\e|^2\d x\to 0\quad\text{ uniformly in  }\e>0\, ,
\]
where $\mathcal S_{\e,j}=\{x\in \R^3:|x|\geq j\text{ or } |\sqrt{a^{\bar\omega}(v_\e(x))}|\geq j\}$. 
  We note that the proof of this estimate is easier to achieve than in the  proof of Theorem \ref{thm1} in Step 4 thanks to the trivial $L^\infty$ bound $\sqrt{a^{\bar\omega}}\leq \max (\sqrt{a_1},\sqrt{a_2})$. 
Indeed, since $\e|\nabla u_\e|^2\L^3\leq \mu_\e\wsto \sigma \H^{2}\res J_u$, it follows from  Prokhorov's theorem 
\[
\lim_{j\to\infty}\int_{\{x:|x|\geq j\}}\e|\nabla u_\e|^2\d x= 0\quad\text{ uniformly in } \e>0\,.
\]
By the $L^\infty$-bound on $\sqrt{a^{\bar\omega}}$, we obtain 
\[
\lim_{j\to\infty}\int_{\{x:|x|\geq j\}}\e|\nabla u_\e|^2|a^{\bar\omega}(v_\e)|^{q/2}\d x= 0\quad\text{ uniformly in } \e>0\,.
\]
The remaining estimate 
\[
\lim_{j\to\infty}\int_{\{x:\sqrt{a^{\bar\omega}(v_\e(x))}\geq j\}}\e|\nabla u_\e|^2|a^{\bar \omega}(v_\e)|^{q/2}\d x=0 \quad\text{ uniformly in  } \e>0\,
\]
holds trivially by the boundedness of $\sqrt{a^\omega}$.
\end{proof}

We are now ready to prove Proposition \ref{prop:lower-bound2}.
\begin{proof}[Proof of Proposition \ref{prop:lower-bound2}] By Proposition \ref{prop:lower_bound} it readily follows that 
\begin{equation*}
    \liminf_{\eps\to0}I_\eps(u_\eps,v_\eps)\ge I(u,v)\,.
\end{equation*}
Thus it suffices to prove that 
\begin{equation*}
\liminf_{\eps\to0}J_\eps(u_\eps,v_\eps)\ge J(u,v)\,.
\end{equation*}
By Proposition \ref{prop:1stvariationVe} and Theorem \ref{thm:lowerbd_integrality} (i), there exists a subsequence (no relabeling) such that 
\[
\nabla u_\e\left(\frac{\bar W'(u_\e)}{\e}-\e\Delta u_\e\right)\L^3\wsto \sigma\H^{2}H_V\res J_u \quad\text{ in } \mathcal M(\R^3;\R^3)\,.
\]
Following \cite{roger2008allen}, we rewrite this as 
  \begin{equation}\label{eq:10}
    \e| \nabla u_\e|^2\frac{\e^{-1}\bar W'(u_\e)-\e\Delta u_\e}{\e|\nabla u_\e|}\frac{\nabla u_\e}{|\nabla u_\e|}\L^3\wsto \sigma\H^{2}H_V\res J_u \quad\text{ in } \mathcal M(\R^3;\R^3)\,.
\end{equation}
Additionally, 
  \begin{equation}\label{eq:9}
    \begin{split}
      \int_{\R^3} \e| \nabla u_\e|^2\left|\frac{\e^{-1}\bar W'(u_\e)-\e\Delta u_\e}{\e|\nabla u_\e|}\right|^2\d x
&= \int_{\R^3} \e^{-1}\left(\frac{\bar W'(u_\e)}{\e}-\e\Delta u_\e\right)^2\d x \\
      &\leq \frac{1}{\min( a_1,a_2) }J_\e(u_\e,v_\e)\\                                                        &\leq \Lambda\,.
    \end{split}
  \end{equation}
By \eqref{eq:10} and \eqref{eq:9},
\begin{equation}\label{eq:11}
        \bigg(\eps|\nabla u_\eps|^2\L^3, \dfrac{(\frac{\bar W'(u_\eps)}{\eps}-\eps \Delta u_\eps)\frac{\nabla u_\e}{|\nabla u_\e|}}{\eps|\nabla u_\eps|} \bigg)\rightharpoonup (\sigma \mathcal{H}^{2}\res J_u, H_{J_u})
    \end{equation}
    weakly as a measure-function pairs in $L^2$.
   By \cite[Proposition 3.2]{Mos01}, the weak  convergence from \eqref{eq:11} and the strong convergence from Lemma \ref{lem:sqrt_strong_mfp}  can be combined to obtain
    \begin{equation*}
        \bigg(\eps|\nabla u_\eps|^2 \L^3, \sqrt{a^\omega(v_\eps)}\dfrac{(\frac{\bar W'(u_\eps)}{\eps}-\eps \Delta u_\eps)}{\eps|\nabla u_\eps|}\frac{\nabla u_\e}{|\nabla u_\e|} \bigg) \rightharpoonup (\sigma \mathcal{H}^{2}\res J_u,\sqrt{a^{\bar\omega}(v)} H_{J_u})
    \end{equation*}
    weakly in $L^1$ (say) as measure-function pairs. Since $\sqrt{a^{\bar\omega}(v_\eps)}\dfrac{(\frac{\bar W'(u_\eps)}{\eps}-\eps \Delta u_\eps)}{\eps|\nabla u_\eps|}$ is uniformly bounded in $L^2_{\e|\nabla u_\e|^2\L^3}(\R^3)$, we have
    \begin{equation*}
        \bigg(\eps|\nabla u_\eps|^2 \L^3, \sqrt{a^{\bar\omega}(v_\eps)}\dfrac{(\frac{\bar W'(u_\eps)}{\eps}-\eps \Delta u_\eps)}{\eps|\nabla u_\eps|} \frac{\nabla u_\e}{|\nabla u_\e|}\bigg) \rightharpoonup (\sigma \mathcal{H}^{2}\res J_u,\sqrt{a^{\bar\omega}(v)} H_{J_u})
    \end{equation*}
    weakly in $L^2$ as measure-function pairs.
    The conclusion follows from the lower semi-continuity result for convex functionals with respect to weak measure-function pair convergence \cite[Theorem 4.4.2 (ii)]{Hut}.
\end{proof}


\appendix
\section{Upper bound in the smooth case}\label{AppendixA}
In this section we briefly discuss the construction of the recovery sequence when $(u,v)$ are such that $J_u$ and $J_v$ are smooth.  We give details for the upper bound construction only in the setting of Theorem \ref{thm2}, being the construction in the setting of Theorem \ref{thm1}, i.e., for $n\ge 2$ and any potential $W$ satisfying \eqref{hp:W}, exactly the same.
\begin{proposition}[Upper bound] Let $E,F$ be smooth subsets of $\R^3$ with $F\subset\partial E$. 
    Let $u=2\chi_{E}-1\in BV_{\rm loc.}(\R^3;\{-1,1\})$ and $v=2\chi_F-1\in BV(S;\{-1,1\})$ with $S=\llbracket \partial E,*\nu_u,1\rrbracket$. Then there exists a sequence $((u_\eps,v_\eps))$ with $u_\eps\in W^{2,2}_{\rm loc.}(\R^3)$, $v_\eps\in \hat H^{1,2}_{\mu_\eps}(\R^3)\cap L^\infty(\R^3)$ such that $u_\eps\to u$ strictly in $BV$,
 \begin{equation*}
    \left(|\nabla(\phi\circ u_\eps)|\L^n,v_\e\right)\to (\sigma \H^{n-1}\res J_u, v)\quad\text{ in }L^q
  \end{equation*}
for every $q\in [1,4)$, 
and
$$\limsup_{\eps\to0}I_\eps(u_\eps,v_\eps)\le I(u,v)\,, \quad \limsup_{\eps\to0}J_\eps(u_\eps,v_\eps)\le J(u,v)\,.$$
\end{proposition}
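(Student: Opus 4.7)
The plan is to construct $u_\eps$ via the classical one-dimensional optimal profile associated with $\bar W$ and the signed distance to $\partial E$, and to construct $v_\eps$ by transporting $v$ into a tubular neighbourhood of $\partial E$ via the nearest-point projection, combined with an analogous profile across $\partial F$ within $\partial E$. Concretely, let $d\colon\R^3\to\R$ be the signed distance from $\partial E$ (positive on $E$), smooth on a tubular neighbourhood $U_{\delta_0}=\{|d|<\delta_0\}$, with $\pi\colon U_{\delta_0}\to\partial E$ the corresponding projection. Let $d_F\colon\partial E\to\R$ denote the signed distance from $\partial F$ within $\partial E$ (positive on $F$), and let $q\in C^2(\R)$ be the solution of $q'=\sqrt{2\bar W(q)}$, $q(0)=0$, so that $q(\pm\infty)=\pm1$. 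I would then set
\begin{equation*}
u_\eps(x):=\eta(d(x))\,q(d(x)/\eps)+(1-\eta(d(x)))\sign(d(x))\,,
\end{equation*}
\begin{equation*}
v_\eps(x):=\eta(d(x))\,q\bigl(d_F(\pi(x))/\eps\bigr)+(1-\eta(d(x)))\,\tilde v(x)\,,
\end{equation*}
with $\eta\in C_c^\infty((-\delta_0,\delta_0);[0,1])$ a cut-off identically equal to $1$ on $(-\delta_0/2,\delta_0/2)$ and $\tilde v\in L^\infty(\R^3)$ a smooth bounded extension of $v$; a mild mollification would yield the required regularity $u_\eps\in W^{2,2}_{\rm loc}(\R^3)$ and $v_\eps\in \Hu^{1,2}_{\mu_\eps}(\R^3)\cap L^\infty(\R^3)$.

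Next I would verify the convergences. The strict convergence $u_\eps\to u$ in $BV_{\rm loc}(\R^3)$ together with $M_\eps(u_\eps)\to\sigma\mathcal H^{2}(\partial E)$ follow from the usual Modica--Mortola calculation and the coarea formula, after the substitution $s=\eps t$ in the normal direction. For the measure--function pair convergence one checks, using $|\nabla(\phi\circ u_\eps)|=(q'(d/\eps))^{2}/\eps$ on $\{|d|<\delta_0/2\}$, that for every $\varphi\in C^0_c(\R^3\times\R)$
\begin{equation*}
\int_{\R^3}\varphi(x,v_\eps(x))\,|\nabla(\phi\circ u_\eps)|\dx\longrightarrow \sigma\int_{\partial E}\varphi(y,v(y))\,\d\mathcal H^{2}(y)\,,
\end{equation*}
and the tightness condition is automatic since everything is concentrated near $\partial E$. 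For $J_\eps$, the Euler--Lagrange identity $q''=\bar W'(q)$ yields $\bar W'(u_\eps)/\eps-\eps\Delta u_\eps=-q'(d/\eps)\Delta d+O(\eps)$ on $\{|d|<\delta_0/2\}$, and $\Delta d|_{\partial E}=-H_{\partial E}$; a Fermi-coordinate computation then gives
\begin{equation*}
\limsup_\eps J_\eps(u_\eps,v_\eps)\le \sigma\int_{\partial E}a^{\bar\omega}(v)|H_{\partial E}|^{2}\,\d\mathcal H^{2}=J(u,v)\,.
\end{equation*}
Finally, in Fermi coordinates $(s_1,s_2,s_3)$ adapted to $\partial F$, where $s_1=d$, $s_2=d_F\circ\pi$, and $s_3$ parametrises $\partial F$, the product structure of $I_\eps$ together with the one-dimensional identity $\int_\R (q'(t))^{2}\dt=\sigma$ yields $\limsup_\eps I_\eps(u_\eps,v_\eps)\le \sigma^{2}\mathcal H^{1}(\partial F)=I(u,v)$.

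The main obstacle is the bookkeeping of the Jacobian factors $1+O(s_1)+O(s_2)$ that appear when passing to Fermi coordinates both around $\partial E$ and around $\partial F$ within $\partial E$, together with the contributions from the transition region $\{\delta_0/2<|d|<\delta_0\}$ where $\eta$ interpolates. All such error terms are uniformly $O(\eps)$ after the rescalings $s_i=\eps t_i$, but verifying this carefully for $J_\eps$, where a squared quantity appears and the interplay between the two subleading contributions must be tracked, is where I expect the principal technical effort to lie. The final mollification required to land in $\Hu^{1,2}_{\mu_\eps}(\R^3)$ is compatible with these estimates because $\mu_\eps$ is a bounded measure concentrated in a tube of width $O(\eps)$ around $\partial E$.
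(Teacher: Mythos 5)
Your plan follows essentially the same route as the paper's proof: both build $u_\eps$ by composing the one-dimensional optimal profile for $\bar W$ with the signed distance to $\partial E$, build $v_\eps$ by transporting the analogous profile across $\partial F$ through the nearest-point projection $\pi$, and then obtain the bounds by combining the coarea formula with $|\nabla d|=1$, the Euler--Lagrange identity $q''=\bar W'(q)$, the equipartition identity $\int_\R (q')^2\d t=\sigma$, and the expansion $\Delta d(x)=-H(\pi(x))+O(d(x))$. The only divergence is cosmetic: the paper truncates the profile to be exactly $\pm 1$ beyond scale $\eps|\log\eps|$ via a cubic polynomial splice (so that $\mu_\eps$ has compact support and the spliced-region errors are $o(\eps^2)$), whereas you keep the untruncated profile and use a fixed-scale cut-off $\eta(d)$, relying on the exponential decay of $q'$ and $1-q^2$ to make the transition-region contributions negligible — both devices serve the same purpose.
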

\begin{proof}
   In order to ensure $u_\eps\in W^{2,2}_{\rm loc.}(\R^3)$ we follow the argument of \cite{BP}.

Let  $(u,v)=(2\chi_{E}-1,2\chi_F-1)$ where $F,E$ are smooth subsets of $\R^3$ with  $F\subset \partial E$. Let ${\rm d}(x):={\rm dist}(x,\R^3\setminus E)-{\rm dist}(x,E)$ be the signed distance to $\partial E$. Letting ${\rm dist}_{\partial E}$ denote the geodesic distance on the smooth submanifold $\partial E$, we may define  the signed geodesic distance to $\partial F$ on $\partial E$ by  ${\rm d}_{\rm g}(y):={\rm dist}_{\partial E}(y,\partial E\setminus F)-{\rm dist}_{\partial E}(y,F)$ for  $y\in\partial E$.
Since $\partial E$ is smooth, for $\alpha>0$ sufficiently small the projection $\pi\colon \{x\in \R^3\colon |{\rm d}(x)|<\alpha\}\to\partial E$ is well defined. 
 For $x\in E_t\defas\{{\rm d}(x)=t\}$ we can write $\pi(x)= x-{\rm d}(x)\nu(x)$ with $\nu(x)={\rm sgn }({\rm d}(x))\frac{x-\pi(x)}{|x-\pi(x)|}$ the normal to $E_t$ at $x$. Moreover $\nu(x)$ coincides with the unit normal to $E$ at $\pi(x)$. We have that 
  \[
 \nabla\pi(x)=P_{T_xE^{{\rm d}(x)}}-{\rm d}(x)\nabla \nu(x)\,,
 \]
 where $P_{T_xE^{{\rm d}(x)}}$ denotes the projection onto the tangent space to $E^{{\rm d}(x)}$ at $x$.
  We recall that ${\rm d}$ and ${\rm d}_{\rm g}$ are $C^2$ in a sufficiently small tubular neighborhood of $\partial E$ and $\partial F$ respectively, $|\nabla {\rm d}(x)|, |\nabla^{\partial E}{\rm d}_{\rm g}(y)|=1$ (where $\nabla^{\partial E}$ denotes the tangential derivative) and 
  \begin{equation}\label{laplacian}
      -\Delta{\rm d}(x)=H^t(x)=\sum_{i=1}^{n-1}\frac{k_i(\pi(x))}{1-k_i(\pi(x)){\rm d}(x)}
\,,
  \end{equation}
where $H^t(x)$ denotes the sum of principal curvatures of the level set $E_t$ in $x\in E_t$ and $k_1(z),\dots, k_{n-1}(z)$ the principal curvatures of $\partial E$ at $z$.

We recall also that the function $w(t)=\tanh{(\sqrt{2}t)}$ is a solution to the minimization problem
\begin{equation*}
    \min\Big\{\int_{-\infty}^{+\infty}(\bar W(w)+\frac12(w')^2)\dt\colon w\in W^{1,2}_{\rm loc.}(\R), \ w(\pm\infty)=\pm1\Big\}\,.
\end{equation*}
Thus, in particular,
\begin{equation}\label{optimal-profile}
    \int_{-\infty}^{+\infty}(\bar W(w)+\frac12(w')^2)\dt= \int_{-\infty}^{+\infty}\sqrt{2\bar W(w)}|w'|\dt= \int_{-1}^{1}\sqrt{2\bar W(s)}\ds=\sigma\,,
\end{equation}
\begin{equation}\label{equipartition}
     \int_{-\infty}^{+\infty}|w'|^2\dt= \int_{-\infty}^{+\infty}\sqrt{2\bar W(w)}|w'|\dt=\sigma\,,
\end{equation}
and
\begin{equation}\label{Euler-Lagrange}
    w''(t)-\bar W'(w(t))=0\quad\forall t\in \R\,.
\end{equation}

 Set  $T_\eps\defas|\log\eps|$  and define $w_\eps\colon\R\to\R$ as
\begin{equation*}
    w_\eps(t)\defas\begin{cases}
        w(t)&\text{ if }t\in [0,T_\eps]\,,\\[4pt]
     p_\eps(t)&\text{ if }t\in (T_\eps,2T_\eps]\,,\\[4pt] 
     1&\text{ if }t\in (2T_\eps,+\infty)\,,\\[4pt] 
-w_\eps(-t)&\text{ if }t\in (-\infty,0)\,,
     \end{cases}
\end{equation*}
where $p_\eps\colon[T_\eps,2T_\eps]\to\R$ is a third degree polynomial chosen in such a way that $w_\eps\in C^{1,1}(\R)\cap C^{\infty}(\R\setminus\{\pm T_\eps,\pm2T_\eps\})$. Set also $\hat w_\eps(t)\defas w_\eps(t/\eps)$. One can verify that 
\begin{equation}\label{estimate-w-eps}
    \|\hat w_\eps'\|_{L^\infty(T_\eps,2T_\eps)}=o(\eps^2)\,,\quad 
    \|\hat w_\eps''\|_{L^\infty(T_\eps,2T_\eps)}=o(\eps^2)\,,\quad 
    \|w-\hat w_\eps\|_{L^\infty(T_\eps,2T_\eps)}=o(\eps)\,.
\end{equation}
We use $\hat w_\eps$, ${\rm d}$ and ${\rm d}_{\rm g}$ to construct  $u_\eps\colon\R^3\to\R$ and  $\tilde v_\eps\colon\partial E\to\R$, precisely, we set 
$$u_\eps(x)\defas \hat w_\eps({\rm d}(x))= w_\eps({\rm d}(x)/\eps)\quad \text{ and } \quad\tilde v_\eps(y)\defas \hat w_\eps({\rm d}_{\rm g}(y))= w_\eps({\rm d}_{\rm g}(y)/\eps)\,.$$

Then we let $v_\eps\colon \{x\in \R^3\colon {\rm dist}(x,\partial E)<\alpha\}\to\R$ be given by $v_\eps(x)\defas\tilde v_\eps(\pi(x))$ and take any extension in $\R^3$ such that $v_\eps\in W^{1,p}_{\rm loc.}(\R^3)$. 

We claim that the sequence $((u_\eps,v_\eps))$ satisfies the thesis.
By construction $u_\eps\to u$ strictly in $BV_{\rm loc.}(\R^3)$ and
 \begin{equation*}
    \left( |\nabla(\phi\circ u_\eps)|\L^n,v_\e\right)\to (\sigma \H^{n-1}\res J_u, v)\quad\text{ in }L^q
  \end{equation*}
as measure-function pairs for every $q\in [1,4)$. Moreover $(\tilde v_\eps)$ satisfies 
\begin{equation}\label{modica-mortola-recovery}
    \limsup_{\eps\to0}\int_{\partial E} \left(
    \frac\eps2|\nabla \tilde v_\eps|^2+\frac{\bar W(\tilde v_\eps)}{\eps}
    \right)\dHn(y)
    \le \sigma \mathcal{H}^1(\partial F)\,.
\end{equation}
For convenience we introduce the localized functional
\begin{equation*}
    I_\eps(u_\eps,v_\eps,A)\defas\int_A\left(
    \frac\eps2|\nabla u_\eps|^2+\frac{W(u_\eps)}{\eps}
    \right)
    \left(
    \frac\eps2|\nabla v_\eps|^2+\frac{W(v_\eps)}{\eps}
    \right)\dx\,,
\end{equation*}
with $A\subset \R^3$ open. Thus we have 
\begin{equation*}
    I_\eps(u_\eps,v_\eps)= I_\eps(u_\eps,v_\eps,\{|{\rm d}(x)|<\eps T_\eps\} )+ I_\eps(u_\eps,v_\eps,\{\eps T_\eps<|{\rm d}(x)|<2\eps T_\eps\} )\,.
\end{equation*}
Using that $|\nabla {\rm d}(x)|=1$, the coarea formula, and the change of variable $x=y+t\nu$ with $y=\pi(x)\in\partial E$ we have 
\begin{equation*}
    \begin{split}
        I_\eps(u_\eps,&v_\eps,\{|{\rm d}(x)|<\eps T_\eps\} )= \int_{\{|{\rm d}(x)|<\eps T_\eps\}}\left( \frac1{2\eps}\Big|w'\Big(\frac{{\rm d}(x)}{\eps}\Big)\Big|^2+ \frac1\eps \bar W\Big(w\Big(\frac{{\rm d}(x)}{\eps}\Big)\Big)\right)
        \left( \frac\eps2|\nabla v_\eps(x)|^2+ \frac{\bar W(v_\eps(x))}{\eps}
        \right)
        \dx\\
        &= \int_{-\eps T_\eps}^{\eps T_\eps}\int_{E_t} 
       \frac1\eps \left( \frac1{2}\Big| w'\Big( \frac t\eps\Big)\Big|^2+  \bar W\Big(w\Big( \frac t\eps\Big)\Big)\right)  \left( \frac\eps2|\nabla  \tilde v_\eps(\pi(x))|^2|\nabla \pi(x)|^2+ \frac{\bar W( \tilde v_\eps(\pi(x)))}{\eps}
        \right)\dHn(x)\dt \\
        & = (1+o(1))\int_{-\eps T_\eps}^{\eps T_\eps}
       \frac1\eps \left( \frac1{2}\Big| w'\Big(\frac t\eps\Big)\Big|^2+  \bar W\Big(w\Big(\frac t\eps\Big)\Big)\right) \dt\int_{\partial E} \left( \frac\eps2|\nabla \tilde v_\eps(y)|^2+ \frac{\bar W(\tilde v_\eps(y))}{\eps}
        \right)\dHn(y)\\
        &\le (1+o(1))\sigma \int_{\partial E} \left( \frac\eps2|\nabla \tilde v_\eps(y)|^2+ \frac{\bar W(\tilde v_\eps(y))}{\eps}
        \right)\dHn(y)\,.
    \end{split}
\end{equation*}
This together with \eqref{modica-mortola-recovery} yields 
\begin{equation*}
    \limsup_{\eps\to0}   I_\eps(u_\eps,v_\eps,\{|{\rm d}(x)|<\eps T_\eps\} )\le \sigma^2\mathcal{H}^2(\partial F)=\sigma^2\mathcal{H}^2(J_v)\,.
\end{equation*}
Similarly, from \eqref{estimate-w-eps}, one gets
\begin{equation}
    \begin{split}
        I_\eps(&u_\eps,v_\eps,\{\eps T_\eps<|{\rm d}(x)|<2\eps T_\eps\} )
       \\ & \le 2(1+o(1))\int_{\eps T_\eps}^{2\eps T_\eps}
        \left( \frac1{2\eps}|\hat w_\eps'(t)|^2+ \frac1\eps \bar W(\hat w_\eps(t))\right) \dt\int_{E_t} \left( \frac\eps2|\nabla \tilde v_\eps(y)|^2+ \frac{\bar W(\tilde v_\eps(y))}{\eps}
        \right)\dHn(y)\\
        &\le \frac C\eps \eps T_\eps\,,
    \end{split}
\end{equation}
from which
\begin{equation*}
    \limsup_{\eps\to0}   I_\eps(u_\eps,v_\eps,\{\eps T_\eps<|{\rm d}(x)|<2\eps T_\eps\} )=0\,.
\end{equation*}
Hence  we infer $\limsup_{\eps\to0}I_\eps(u_\eps,v_\eps)\le I(u,v)$. It remains to prove the upper bound for  $J_\eps(u_\eps,v_\eps)$.  
As before we write
\begin{equation*}
    J_\eps(u_\eps,v_\eps)= J_\eps(u_\eps,v_\eps,\{|{\rm d}(x)|<\eps T_\eps\} )+ J_\eps(u_\eps,v_\eps,\{\eps T_\eps<|{\rm d}(x)|<2\eps T_\eps\} )\,.
\end{equation*}
By \eqref{laplacian} and \eqref{Euler-Lagrange} it follows
\begin{equation*}
    \begin{split}
        \frac{\bar W'(u_\eps(x))}{\eps}-\eps\Delta u_\eps(x)&=\frac{\bar W'(\hat w_\eps({\rm d}(x)))}{\eps}- \eps \Big(\hat w_\eps''({\rm d}(x))+ \hat w_\eps'({\rm d}(x))\Delta{\rm d}(x) \Big)\\
        &=\frac{\bar W'(w_\eps({\rm d}(x)/\eps))}{\eps}-  \frac{w_\eps''({\rm d}(x)/\eps)}{\eps}+w_\eps'({\rm d}(x)/\eps)H^t(x)\\&= w_\eps'({\rm d}(x)/\eps)H^t(x)\,.
    \end{split}
\end{equation*}
From \eqref{laplacian} and the fact that $\|H\|_{L^\infty(\partial E)}<+\infty$ one can deduce that
\begin{equation*}
    H^t(x)=H(\pi(x))+o({\rm d}(x))=H(\pi(x))+o(\eps|\log\eps|)\quad\forall x\in \{|{\rm d}(x)|<\eps T_\eps\}\,.
\end{equation*}
Hence we get 
\begin{equation}\label{ub-J-1}
    \begin{split}
        J_\eps(u_\eps,&v_\eps,\{|{\rm d}(x)|<\eps T_\eps\} )=\int_{\{|{\rm d}(x)|<\eps T_\eps\} }\frac1\eps a^{\bar\omega}(v_\eps)\left(w_\eps'\Big(\frac{{\rm d}(x)}{\eps}
        \Big)\right)^2
        \left|H^t(x)\right|^2\dx\\
        & =
        \int_{\{|{\rm d}(x)|<\eps T_\eps\} }\frac1\eps a^{\bar\omega}(\tilde v_\eps(\pi(x)))\left(w'\Big(\frac{{\rm d}(x)}{\eps}
        \Big)\right)^2
        \Big(\left|H(\pi(x))\right|^2+ o(\eps^2|\log\eps|^2)\Big)\dx\\
        &= \frac1\eps\int_{-\eps T_\eps}^{\eps T_\eps} \left(w'\Big(\frac{t}{\eps}
        \Big)\right)^2\int_{E_t}
        a^{\bar\omega}(\tilde v_\eps(\pi(x)))
         \Big(\left|H(\pi(x))\right|^2+ o(\eps^2|\log\eps|^2)\Big)\dHn(x)\dt\\
        & = (1+o(1))\frac1\eps\int_{-\eps T_\eps}^{\eps T_\eps} \left(w'\Big(\frac{t}{\eps}
        \Big)\right)^2\dt\int_{\partial E}
        a^{\bar\omega}(\tilde v_\eps(y))
         \Big(\left|H(y)\right|^2+ o(\eps^2|\log\eps|^2)\Big)\dHn(y)
        \,.
    \end{split}
\end{equation}
By \eqref{equipartition} we find
\begin{equation}\label{ub-J-2}
    \frac1\eps\int_{-\eps T_\eps}^{\eps T_\eps} \left(w'\Big(\frac{t}{\eps}
        \Big)\right)^2\dt =\int_{-T_\eps}^{T_\eps}|w'|^2\dt\le \sigma\,.
\end{equation}
Whereas 
\begin{equation}\label{ub-J-3}
    \limsup_{\eps\to0} \int_{\partial E}
        a^{\bar\omega}(\tilde v_\eps(y))
         \Big(\left|H(y)\right|^2+ o(\eps^2|\log\eps|^2)\Big)\dHn(y)\le 
         \int_{\partial E}
        a^{\bar\omega}(v(y))
\left|H(y)\right|^2\dHn(y)=J(u,v)\,.
\end{equation}
Combining \eqref{ub-J-1}--\eqref{ub-J-3} we get 
\begin{equation*}
\limsup_{\eps\to0}J_\eps(u_\eps,v_\eps,\{|{\rm d}(x)|<\eps T_\eps\} )\le J(u,v)
\end{equation*}
Finally from the second equality in \eqref{estimate-w-eps} one can easily deduce that
\begin{equation*}
\limsup_{\eps\to0}J_\eps(u_\eps,v_\eps,\{\eps T_\eps<|{\rm d}(x)|<2\eps T_\eps\} )=0\,,
\end{equation*}
and the proof is concluded.
\end{proof}

\section{Uniqueness for the gradient in Sobolev spaces with respect to measures}\label{AppendixB}

As in Section 2.3, let $M$ be a $k$-rectifiable, $\tau:M\to\Lambda^k(\R^n)$ an orientation of $M$, $\rho:M\to[0,\infty)$ locally $\H^k\res M$ integrable, and $S=\llbracket M,\tau,\rho\rrbracket$.
In the proof of the uniqueness of the gradient for $u\in H^{1,p}(S)$, we will use the following lemma:

\begin{lemma}{\cite[Corollary 2.10(ii)]{ADS}}
 Let $S=\llbracket M,\tau,\rho\rrbracket$ and $u\in BV(S)$. Then there exists a $\Lambda_{n-k+1}(\R^n)$ valued measure $R(S,u)$ whose components $R^\gamma(S,u)$, $\gamma\in \Lambda(n,n-k+1)$, are given by 
\[
\langle \varphi,R^\gamma(S,u)\rangle=\int_M u (\nu\wedge\nabla \varphi )^\gamma \rho\d\H^k \quad\text{ for all } \varphi\in C^1_c(\R^n)\,.
\] 
\end{lemma}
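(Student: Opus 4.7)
The plan is to build the measure $R(S,u)$ componentwise: for each $\gamma\in\Lambda(n,n-k+1)$, show that the prescribed formula extends from $C^1_c(\R^n)$ to a bounded linear functional on $C^0_c(\R^n)$ with operator norm controlled by $M(G_{u,S})$. The Riesz representation theorem then furnishes $R^\gamma(S,u)$ as a finite Radon measure, and collecting the components yields the $\Lambda_{n-k+1}(\R^n)$-valued measure $R(S,u)$.

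The identification goes through the generalized graph current $G_{u,S}=-\partial \Sigma_{u,S}+S\otimes\delta_0$ introduced in Section 2.3. For each $\gamma$, I would construct a $k$-form $\omega_\varphi^\gamma(x,y)=\varphi(x)\,\eta^\gamma$ on $\R^{n+1}=\R^n_x\times\R_y$, where $\eta^\gamma$ is a constant $k$-form depending only on $\gamma$ and containing the factor $dy$. The contribution of $S\otimes\delta_0$ vanishes, because the restriction $\omega_\varphi^\gamma(\cdot,0)$ as a $k$-form on $\R^n$ is zero (the $dy$ factor has no counterpart on $\R^n$). The remaining term
\[
-\langle \partial\Sigma_{u,S},\omega_\varphi^\gamma\rangle=-\langle \Sigma_{u,S}, d\omega_\varphi^\gamma\rangle
\]
is computed via Cartan's formula for $d\omega_\varphi^\gamma=d\varphi\wedge\eta^\gamma$ and the explicit orientation $\alpha=\pm e_y\wedge \tau$ on $E_{u,S}$: the $y$-integration over the subgraph produces the multiplication by $u$, and the remaining $x$-integration gives, up to sign, precisely $\int_M u(\nu\wedge\nabla\varphi)^\gamma\rho\,\d\H^k$, identifying the expression with $\langle G_{u,S},\omega_\varphi^\gamma\rangle$.

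With this identification, the bound
\[
|\langle \varphi, R^\gamma(S,u)\rangle|=|\langle G_{u,S},\omega_\varphi^\gamma\rangle|\leq M(G_{u,S})\,\|\omega_\varphi^\gamma\|_{L^\infty}\leq C_\gamma\, M(G_{u,S})\,\|\varphi\|_{L^\infty}
\]
is immediate from $u\in BV(S)\Leftrightarrow M(G_{u,S})<+\infty$. Since only $\|\varphi\|_{L^\infty}$ appears on the right (and not $\|\nabla\varphi\|_{L^\infty}$), a density argument extends the functional to all of $C^0_c(\R^n)$, and Riesz representation yields the desired finite Radon measure. The main obstacle will be the combinatorial bookkeeping in matching $\gamma\in\Lambda(n,n-k+1)$ with the appropriate constant $k$-form $\eta^\gamma$ on $\R^{n+1}$: one must reconcile the $\Lambda_{n-k+1}$-valued target with the $k$-form side of the current pairing (a Hodge-style duality) and keep track of signs coming from the orientation $\alpha=\pm e_y\wedge\tau$ together with the $dy$ factor in $\eta^\gamma$. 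This is the detailed computation carried out in \cite{ADS}, which I would closely follow.
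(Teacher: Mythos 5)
The paper does not prove this lemma: it is quoted verbatim from \cite{ADS}, Corollary 2.10(ii), and used as a black box in Lemma \ref{lem:gradient_uniqueness_current}. There is therefore no internal argument to compare against, and your sketch is, as you yourself note at the end, precisely the strategy of \cite{ADS}. That strategy is sound: the whole point of the statement is that the functional $\varphi\mapsto\int_M u\,(\nu\wedge\nabla\varphi)^\gamma\rho\,\d\H^k$, which superficially looks like it can only be bounded in terms of $\|\nabla\varphi\|_{L^\infty}$, can be rewritten as $\pm\langle G_{u,S},\omega^\gamma_\varphi\rangle$ for a $k$-form $\omega^\gamma_\varphi=\varphi\,\eta^\gamma$ with constant $\eta^\gamma$ on $\R^{n+1}$ carrying a $dy$-factor; Stokes' theorem moves the derivative from the test form onto the current, the $dy$-factor annihilates the $S\otimes\delta_0$ contribution, and finiteness of $M(G_{u,S})$ (i.e.~$u\in BV(S)$) then yields a bound depending only on $\|\varphi\|_{L^\infty}$. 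Riesz representation on each component and assembly over $\gamma\in\Lambda(n,n-k+1)$ finishes the argument.

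Two technical points would need attention if you wrote this out in full. First, $\omega^\gamma_\varphi(x,y)=\varphi(x)\,\eta^\gamma$ has compact support in $x$ but not in $y$, so it is not an element of $\mathcal D^k(\R^{n+1})$; the identity $\langle\partial\Sigma_{u,S},\omega^\gamma_\varphi\rangle=\langle\Sigma_{u,S},d\omega^\gamma_\varphi\rangle$ therefore requires either a truncation in $y$ (letting the cutoff tend to $1$ and using dominated convergence via $M(\Sigma_{u,S})<\infty$, $M(G_{u,S})<\infty$) or the standard extension of the current pairing to bounded Borel forms. Second, the exact identification of $\eta^\gamma$ with the index $\gamma\in\Lambda(n,n-k+1)$ and the attendant signs, coming from $\alpha=\mp e_y\wedge\tau$, $\nu\sim\star\tau$ and the commutation of $d\varphi$ past $dy$, do need to be pinned down to recover the formula exactly as stated rather than up to an undetermined sign. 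Neither point is a gap in the idea, but they are where the ``combinatorial bookkeeping'' you flag actually lives.
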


In the upcoming lemma, we write $\mu:=\|S\|$. We will assume that $\partial S=0$, since this is the case that we are working with, and this alleviates the calculations.

\begin{lemma}
\label{lem:gradient_uniqueness_current}
Assume that $S$ is as above with $\partial S=0$. 
Let $(v_j)_{j\in \N}\subset C^\infty_c(\R^n)$ such that
    \begin{equation*}
        \lim_{j\to\infty}\|v_j\|_{L^p_{\mu}(\R^n)}= 0 \quad\textrm{ and }\quad \sup_{j\in\N}\|\nabla v_j\|_{L^p_{\mu}(\R^n)}<C\,,
\end{equation*}
     then $\nabla v_j\wto 0  $ in $L^p_{\mu}(\R^n)$.
\end{lemma}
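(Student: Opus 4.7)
Since $1 < p < \infty$, the space $L^p_\mu(\R^n;\R^n)$ is reflexive, so the uniform bound on $\|\nabla v_j\|_{L^p_\mu}$ lets me extract from any subsequence a further subsequence $\nabla v_{j_l} \wto g$ for some $g \in L^p_\mu$. By the usual subsequence-of-subsequence principle, it suffices to show that every such weak limit $g$ has vanishing tangential component, i.e.\ $P(x)g(x) = 0$ for $\mu$-a.e.\ $x$, where $P(x)$ denotes the orthogonal projection onto $T_xM$; as only the tangential part enters the norm of $H^{1,p}(S)$ (cf.\ the remark following the definition), this is what the conclusion $\nabla v_j \wto 0$ of the lemma amounts to here.

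The central identity exploits the hypothesis $\partial S = 0$ as integration by parts in the sense of currents. For every $\omega \in \mathcal{D}^{k-1}(\R^n)$, applying $\partial S = 0$ to the form $v_j \omega$ and using $d(v_j\omega) = dv_j \wedge \omega + v_j\,d\omega$ gives
\[
\langle S,\, dv_j \wedge \omega\rangle = -\langle S,\, v_j\,d\omega\rangle\,.
\]
The right-hand side is bounded in absolute value by $\|d\omega\|_{L^\infty}\,\mu(\supp\omega)^{1/p'}\,\|v_j\|_{L^p_\mu}$, hence vanishes as $j\to\infty$. Expanding $dv_j = \sum_i \partial_i v_j\,dx_i$, the left-hand side can be rewritten as $\int \nabla v_j \cdot \Psi_\omega\,d\mu$ for the compactly supported bounded vector field $\Psi_\omega$ with components $\Psi_\omega^i(x) := \langle \tau(x), dx_i \wedge \omega(x)\rangle$. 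Since $\Psi_\omega \in L^{p'}_\mu(\R^n;\R^n)$, passing to the weak limit yields
\[
\int g \cdot \Psi_\omega\,d\mu = 0 \qquad\text{for every } \omega \in \mathcal{D}^{k-1}(\R^n)\,.
\]

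The final step is a localization/pointwise argument. Using the contraction identity $\langle \tau, g^\flat \wedge \omega\rangle = \langle g^\flat\,\lrcorner\,\tau,\,\omega\rangle$, the displayed identity reads $\int_M \langle g^\flat\,\lrcorner\,\tau,\,\omega\rangle\,\rho\,d\H^k = 0$ for every $\omega$; testing against $\omega = \varphi\,dx_I$ with $\varphi \in C^\infty_c(\R^n)$ and all multi-indices $|I|=k-1$ shows that $g^\flat\,\lrcorner\,\tau = 0$ in $\Lambda_{k-1}(\R^n)$ for $\mu$-a.e.\ $x$. Choosing, at such a point $x$, an orthonormal frame $e_1,\dots,e_n$ with $\tau(x) = e_1\wedge\dots\wedge e_k$ and $T_xM = \mathrm{span}(e_1,\dots,e_k)$, a direct calculation yields
\[
g^\flat\,\lrcorner\,\tau \;=\; \sum_{i=1}^{k} (-1)^{i-1} g_i(x)\, e_1\wedge\dots\widehat{e_i}\dots\wedge e_k\,,
\]
so the vanishing forces $g_1(x)=\dots=g_k(x)=0$, i.e.\ $P(x)g(x)=0$. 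The main delicate point is precisely this passage from the distributional identity to its pointwise counterpart under a varying tangent plane $\tau(x)$, but the $L^\infty$-bound on the $\Psi_\omega$ together with the $\H^k$-measurability of $\tau$ make the localization standard.
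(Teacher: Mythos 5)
Correct, and essentially the same argument as the paper's: both exploit $\partial S=0$ as an integration-by-parts identity, pass to a weak limit of the gradients, and conclude by multilinear algebra that the tangential part of the limit vanishes. Your version is the Hodge dual of the paper's and slightly more self-contained — the paper invokes the $R(S,\cdot)$-measure of \cite{ADS} to arrive at $\nu\wedge w=0$ with $\nu=\star\tau$ and $w$ the (tangential) weak limit of $\nabla_\mu v_j$, whereas you test $\partial S=0$ directly against $v_j\omega$, obtain $g^\flat\lrcorner\tau=0$ for the weak limit $g$ of the full gradient, and read off $P(x)g(x)=0$ in a frame, avoiding the extra step of arguing that tangential fields are weakly closed. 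You are also right to interpret the conclusion ``$\nabla v_j\wto 0$'' as a statement about the tangential gradient $\nabla_\mu v_j$: as your own argument and the plane example $v_j(x)=\phi(x')\sin(jx_n)/j$ show, the normal component of the gradient need not vanish, so the tangential reading is both what the hypotheses control and all that the uniqueness of $\nabla_\mu$ in $H^{1,p}(S)$ requires.
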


\begin{proof}
  Let $\varphi\in C_c^\infty(\R^n)$. Then $R(S,\varphi)$ is in $C^\infty_c(\R^n;\Lambda_{n-k+1}(\R^n))$, and for $\gamma\in\Lambda(n,n-k+1)$ we have that
\[
0=\lim_{j\to\infty} \int_M R^\gamma(S,\varphi)v_j\rho\d\H^k=-\lim_{j\to\infty}\int_M R^\gamma(S,v_j)\varphi\rho\d\H^k\, 
\]
Since $R(S,v_j)=\nu\wedge\nabla_\mu v_j$ and by the boundedness of $(\nabla_\mu v_j)_{j\in\N}$  in $L^p_{\mu}(\R^n;\R^n)$, there exists a subsequence (no relabeling) and $w\in L^p_{\mu}(\R^n;\R^n)$ such that $\nabla_\mu v_j\wto w$ in $L^p_\mu(\R^n;\R^n)$, and 
\[
\begin{split}
\lim_{j\to\infty}\int_M R^\gamma(S,v_j)\varphi\rho\d\H^k&=\int (\nu\wedge\nabla_\mu v_j)^\gamma\varphi\rho\d\H^k\\
&=\int (\nu\wedge w)^\gamma\varphi\rho\d\H^k\,.
\end{split}
\] 
Since $\nabla_\mu v_j$ is orthogonal to $\nu$, the same holds for $w$, and hence $w=0$ follows since $\varphi$ can be chosen arbitrarily.
\end{proof}

\begin{lemma}\label{lem:gradient_uniqueness_bulk}
Let $\e>0$,  $W:\R\to[0,\infty)$ as in Theorem \ref{thm1},  $\mu_\e=\left(\frac{\e}{2}|\nabla u_\e|^2+\e^{-1}W(u_\e)\right)\L^n$.    If $(v_k)_{k\in\N}\subset C^\infty_c(\R^n)$ is such that
    \begin{equation*}
        \lim_{k\to\infty}\|v_k\|_{L^p_{\mu_\eps}(\R^n)}= 0 \quad\textrm{ and }\quad \sup_{k\in\N}\|\nabla v_k\|_{L^p_{\mu_\eps}(\R^n)}<C\,,
    \end{equation*}
   then $\nabla v_k\wto 0  $ in $L^p_{\mu_\e}(\R^n)$.

\end{lemma}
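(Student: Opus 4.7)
The key idea: since $\mu_\e = f_\e \L^n$ with density $f_\e := \tfrac{\e}{2}|\nabla u_\e|^2 + \e^{-1}W(u_\e) \in L^1(\R^n)$, I will reduce the question to one on sets where $f_\e$ is bounded above and away from zero, where the classical $W^{1,p}$ theory applies, and then control the complement using that $f_\e$ is integrable.

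First, since $p > 1$, the space $L^p_{\mu_\e}(\R^n; \R^n)$ is reflexive, so the uniform bound on $\|\nabla v_k\|_{L^p_{\mu_\e}}$ allows me to extract a subsequence (not relabeled) with $\nabla v_k \rightharpoonup w$ weakly in this space. Because $C^\infty_c(\R^n; \R^n)$ is dense in $L^{p'}_{\mu_\e}(\R^n; \R^n)$ (as $\mu_\e$ is a finite Radon measure), it will suffice to show that $\int \varphi \cdot w \, d\mu_\e = 0$ for every $\varphi \in C^\infty_c(\R^n; \R^n)$.

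Next I set $A_m := \{x \in \R^n : |x| < m,\ 1/m < f_\e(x) < m\}$; dominated convergence gives $\mu_\e(\R^n \setminus A_m) \to 0$ as $m \to \infty$. On $A_m$ the lower bound $f_\e \ge 1/m$ converts the hypothesis into $v_k \to 0$ in $L^p(A_m)$ with $(\nabla v_k)$ bounded in $L^p(A_m; \R^n)$, both with respect to Lebesgue measure. A standard distributional argument --- test against $\psi \in C^\infty_c(\mathrm{int}(A_m))$ and integrate by parts --- then forces any weak $L^p(\mathrm{int}(A_m); \R^n)$-limit of $\nabla v_k$ to vanish, so $\nabla v_k \rightharpoonup 0$ in $L^p(\mathrm{int}(A_m); \R^n)$ with respect to Lebesgue.

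To conclude, for $\varphi \in C^\infty_c(\R^n; \R^n)$ I split
\[
\int \varphi \cdot \nabla v_k \, d\mu_\e = \int_{A_m} \varphi \cdot \nabla v_k \, f_\e \, dx + \int_{\R^n \setminus A_m} \varphi \cdot \nabla v_k \, f_\e \, dx.
\]
For fixed $m$, the first term tends to $0$ as $k \to \infty$ since $\varphi f_\e \chi_{A_m} \in L^{p'}(\R^n; \R^n)$ (bounded on $A_m$). Hölder bounds the second term by $\|\varphi\|_\infty \, C^{1/p} \, \mu_\e(\R^n \setminus A_m)^{1/p'}$, independently of $k$. Passing first to $k \to \infty$ and then to $m \to \infty$ yields $\int \varphi \cdot w \, d\mu_\e = 0$, and the claim follows. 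The main obstacle is the mismatch between the reference measure $\mu_\e$, in which the hypotheses are formulated, and the Lebesgue measure underlying the distributional integration-by-parts identity; the truncation $A_m$ is the bridge between them, and its legitimacy rests on the integrability $f_\e \in L^1(\R^n)$.
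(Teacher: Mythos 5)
There is a genuine gap in the step that passes from $\operatorname{int}(A_m)$ to $A_m$. You define $A_m = \{|x|<m,\ 1/m < f_\e(x) < m\}$ for the $L^1$-density $f_\e = \tfrac{\e}{2}|\nabla u_\e|^2 + \e^{-1}W(u_\e)$. Since $f_\e$ is only an $L^1$ function (indeed only defined up to Lebesgue-null sets), the set $A_m$ is merely measurable: it can have empty interior while carrying positive Lebesgue measure. Your distributional argument, testing against $\psi \in C^\infty_c(\operatorname{int}(A_m))$, only identifies the weak limit on $\operatorname{int}(A_m)$, leaving it completely undetermined on $A_m \setminus \operatorname{int}(A_m)$. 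Yet the very next step asserts $\int_{A_m} \varphi\cdot \nabla v_k\, f_\e\, dx \to 0$, which needs weak $L^p(A_m,dx)$-convergence of $\nabla v_k$ to zero on all of $A_m$, not just on its interior. The alternative of extending the integration by parts to $A_m$ directly would produce an uncontrolled boundary term, since $A_m$ has no reason to be a set of finite perimeter (its "boundary" is that of a sublevel/superlevel set of an $L^1$ function). Note also that $\nabla v_k$ is \emph{not} bounded in $L^p(\R^n,dx)$, only in $L^p_{\mu_\e}$, so $\varphi f_\e\chi_{A_m}\in L^{p'}(\R^n,dx)$ does not by itself give the convergence of the first term.

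The paper circumvents exactly this obstruction by working not with level sets of $f_\e$ but with slabs $\{s_1\le u_\e\le s_2\}$ of the Sobolev phase field $u_\e$, and this choice buys three things simultaneously: (a) by the coarea formula for $W^{1,2}$ functions, $\{u_\e\ge s\}$ is a set of finite perimeter for a.e.\ $s$, so the Gauss--Green formula on $\{s_1\le u_\e\le s_2\}$ is available, with boundary terms living on $\partial_*\{u_\e\ge s_i\}$; (b) the lower bound $W(u_\e)\ge c(W,s_1,s_2)>0$ on the slab gives the same "density bounded below" comparison with Lebesgue measure that you were after with $A_m$, so $\|v_k\|_{L^p_{\mu_\e}}\to 0$ forces $v_k\to 0$ in $L^2(dx)$ on the slab; and (c) a coarea/Cauchy--Schwarz estimate shows $\int_{t_1}^{t_2}\int_{\partial_*\{u_\e\ge t\}}|v_k|\,d\H^{n-1}\,dt\to 0$, so after passing to a subsequence the boundary terms vanish for a.e.\ $s_1,s_2$. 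Your truncation strategy and the final $m\to\infty$ limit are in the right spirit, but the set $A_m$ lacks the geometric regularity (openness or finite perimeter) that the argument must rest on, and that regularity is precisely what the slabs of $u_\e$ supply.
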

\begin{proof}
It suffices to show that for any subsequence, there exists a further subsequence for which the gradient converges weakly to 0 in $L^p_{\mu_\e}$. So let us start with an arbitrary subsequence. By boundedness of the sequence in $L^p_{\mu_\e}$, we may choose a further subsequence that is weakly convergent to some limit $f$,
\[
\nabla v_k\wto f\quad\text{ in }L^p_{\mu_\e}\,.
\]
Here and in the following, we do not relabel when we take subsequences. 

\medskip

Suppose that $ -1<t_1<t_2<1$. Then 
\begin{equation*}
\begin{split}
    \int_{t_1}^{t_2}\int_{\partial_*\{u_\eps\ge t\}}|v_k|\dHn \dt&= \int_{\{t_1\leq u_\eps \leq t_2\}}| v_k\|\nabla u_\eps|\dx\\
    &\le \dfrac{\eps \|\nabla u_\eps\|_{L^2(\R^n)}}{C(W,t_1,t_2)}\bigg(\int_{\{t_1\leq u_\eps \leq t_2\}} \frac{W(u_\eps)}{\eps}|v_k|^2 \dx \bigg)^2\\
    &\le \dfrac{\eps \|\nabla u_\eps\|_{L^2(\R^n)}}{C(W,t_1,t_2)} \|v_k\|^2_{L^2_{\mu_\eps}(\R^n)}\to 0\quad \text{as }k\to+\infty\,.
\end{split}
\end{equation*}
Therefore, there exists a subsequence  such that for  almost every $s_1,s_2$ with $t_1<s_1<s_2<t_2$, we have that $\{s_1\leq u_\eps \leq s_2\}$ is a  set of finite perimeter and 
\begin{equation}
\label{eq:Hn-1bound}
    \int_{\partial_*\{u_\eps\ge s_i\}}|v_k|\dHn \to 0\quad \text{as }k\to+\infty\quad \text{ for }i=1,2\,.
\end{equation}
For every $\varphi\in C^\infty_0(\R^n)$ we can apply the Gauss-Green theorem:
 \begin{equation*}
    \int_{\{s_1\leq u_\eps \leq s_2\}}\nabla v_k\cdot \varphi \dx= \int_{\{s_1\leq u_\eps \leq s_2\}}v_k\textrm{ div }\varphi\dx+\int_{\partial_*\{s_1\leq u_\eps \leq s_2\}}v_k\langle \varphi,\nu_{u_\eps}\rangle\dHn\,.
 \end{equation*}
 The last term converges to $0$ by choice of $s_1$ and $s_2$.
Moreover, we have 
    \begin{equation*}
         \begin{split}\int_{\{s_1\leq u_\eps \leq s_2\}} |v_k|^2\dx&\leq
         \frac{\eps}{C(W,s_1,s_2)}\int_{\{s_1\leq u_\eps 
         \leq s_2\}} \frac{W(u_\eps)}{\eps}|v_k|^2\dx\\
         &\leq 
            \frac{\eps}{C(W,s_1,s_2)}
          \|v_k\|^2_{L^2_{\mu_\eps}(\R^n)}\\
&         \to 0\quad \text{as }k\to+\infty\,.
         \end{split}
    \end{equation*}
   Thus,
   \begin{equation*}
       \int_{\{s_1\leq u_\eps \leq s_2\}}\nabla v_k\cdot \varphi\dx\to 0\quad \text{as }k\to+\infty\,.
   \end{equation*}
This implies that $f=0$ on $\{x:s_1\leq u_\eps(x) \leq s_2\}$. Since we may choose $t_1,s_1$ arbitrarily close to $-1$, and $t_2,s_2$ arbitrarily close to $1$, we obtain $f=0$ on $\{x:-1<u_\e(x)<1\}$.  
 
 \medskip
 
With the same arguments we can prove that $f=0$ on $\{x:u_\e(x)<-1\}$ 
and on  $\{x:u_\e(x)>1\}$. Since $\mu_\e(\{x:u_\e(x)=\pm1\})=0$, we obtain that $f=0$ $\mu_\e$-almost everywhere, which completes the proof of the present lemma.
\end{proof}

\end{document}